\theoremstyle{definition}
\newtheorem{definition}{Definition}[section]
\newtheorem{example}[definition]{Example}
\newtheorem{question}[definition]{Question}
\theoremstyle{plain}
\newtheorem{proposition}[definition]{Proposition}
\newtheorem{theorem}[definition]{Theorem}
\numberwithin{equation}{section}
\def\fullref#1#2{%
  \ifdefined\hyperref%
    {\hyperref[#2]{#1\space\penalty 200\relax\ref*{#2}}}%
  \else%
    {#1\space\penalty 200\relax\ref{#2}}%
  \fi%
}
\newcommand{\defterm}[1]{\textit{#1}}
\newcommand{\cgen}[1]{#1^{\#}}
\newcommand{\malcgen}[1]{#1^{{\rm M}}}
\newcommand{\ccgen}[1]{#1^{{\rm C}}}
\newcommand{\lccgen}[1]{#1^{{\rm LC}}}
\newcommand{\rccgen}[1]{#1^{{\rm RC}}}
\newcommand{\pres}[3][]{#1\langle #2\:#1|\:#3 #1\rangle}
\newcommand{\sgpres}[3][]{\mathrm{Sg}#1\langle #2\:#1|\:#3 #1\rangle}
\newcommand{\sgmpres}[3][]{\mathrm{SgM}#1\langle #2\:#1|\:#3 #1\rangle}
\newcommand{\sgcpres}[3][]{\mathrm{SgC}#1\langle #2\:#1|\:#3 #1\rangle}
\newcommand{\sglcpres}[3][]{\mathrm{SgLC}#1\langle #2\:#1|\:#3 #1\rangle}
\newcommand{\sgrcpres}[3][]{\mathrm{SgRC}#1\langle #2\:#1|\:#3 #1\rangle}
\newcommand{\invpres}[3][]{\mathrm{Inv}#1\langle #2\:#1|\:#3 #1\rangle}
\def\mall#1{#1^{\mathsf{L}}}
\def\malr#1{#1^{\mathsf{R}}}
\newcommand{\imreduces}{\rightarrow}
\def\adjzero#1{{#1}^{{\tt 0}}}
\newcommand{\nset}{\mathbb{N}}
\newcommand{\zset}{\mathbb{Z}}
\newcommand{\rset}{\mathbb{R}}
\DeclareMathOperator{\im}{im}
\newcommand{\emptyword}{\varepsilon}
\newcommand{\rel}[1]{\mathcal{#1}}
\newcommand{\elt}[1]{\overline{#1}}
\newcommand{\rep}[1]{\underline{#1}}
\DeclareMathOperator{\gR}{\mathcal{R}}
\DeclareMathOperator{\gL}{\mathcal{L}}
\DeclareMathOperator{\gD}{\mathcal{D}}
\DeclareMathOperator{\gJ}{\mathcal{J}}
\newcommand{\cf}{\mathrm{cf}}
\newcommand{\scf}{\mathrm{scf}}
\newcommand{\rev}{\mathrm{rev}}
\newcommand{\fgt}{{\sc FGT}}
\newcommand{\fdt}{\textsc{FDT}}
\newcommand{\FP}{\mathrm{FP}}
\newcommand{\bbe}{\mathbb{E}}
\newcommand{\bbp}{\mathbb{P}}
\newcommand{\bbq}{\mathbb{Q}}
\newcommand{\bbr}{\mathbb{R}}
\newcommand{\bbs}{\mathbb{S}}
\begin{document}

\title{For a few elements more: A~survey of finite Rees index}
\author{Alan J. Cain and Victor Maltcev}
\date{}
\thanks{During the writing of this paper, the first
  author was supported by the European
  Regional Development Fund through the programme {\sc COMPETE} and by
  the Portuguese Government through the {\sc FCT} (Funda\c{c}\~{a}o
  para a Ci\^{e}ncia e a Tecnologia) under the project
  {\sc PEst-C}/{\sc MAT}/{\sc UI}0144/2011 and through an {\sc FCT} Ci\^{e}ncia 2008
  fellowship.}

\maketitle

\address[AJC]{%
Centro de Matem\'{a}tica, Universidade do Porto, \\
Rua do Campo Alegre 687, 4169--007 Porto, Portugal
}
\email{%
ajcain@fc.up.pt
}
\webpage{%
www.fc.up.pt/pessoas/ajcain/
}

\address[VM]{%
Mathematical Institute, University of St Andrews,\\
North Haugh, St Andrews, Fife KY16 9SS, United Kingdom
}
\email{%
victor.maltcev@gmail.com
}

\begin{abstract}
This paper makes a comprehensive survey of results relating to finite
Rees index for semigroups. In particular, we survey of the state of
knowledge on whether various finiteness properties (such as finite
generation, finite presentability, automaticity, and hopficity) are
inherited by finite Rees index subsemigroups and extensions. We survey
other properties that are invariant under passing to finite Rees index
subsemigroups and extensions, such as the cofinality and number of
ends. We prove some new results: inheritance of word-hyperbolicity by
finite Rees index subsemigroups, and inheritance of (geometric)
hyperbolicity by finite Rees index extensions and subsemigroups within
the class of monoids of finite geometric type. We also give some
improved counterexamples. All the results are summarized in a table.
\end{abstract}


\section{Introduction}

One of the most important ideas in Group Theory is the notion of
\emph{index}.  It appears in many important theorems: for instance,
Gromov's Growth Theorem, the Muller--Schupp Theorem, and a whole
strand of results of the Reidemeister--Schreier-type, that is, on
preservation of various conditions to subgroups or extensions of finite
index.

\begin{landscape}
\begin{longtable}{lc@{\hspace{8mm}}c@{\space}lc@{\space}l}
\caption[Summary of properties inherited by small extensions or large subsemigroups]{\pdfbookmark[1]{Summary table}{pdf:summarytable}Summary of properties inherited by small extensions or large subsemigroups} \label{tbl:summary} \\
\toprule 
& & \multicolumn{4}{c}{{\it Inherited by}} \\
\cmidrule(r){3-6} 
{\it Property} & {\it See} & \multicolumn{2}{l}{{\it Small extensions}} & \multicolumn{2}{l}{{\it Large subsemigroups}} \\
\midrule \endfirsthead
\caption{Summary of properties inherited by small extensions or large subsemigroups (continued)} \\
\toprule 
& & \multicolumn{4}{c}{{\it Inherited by}} \\
\cmidrule(r){3-6} 
{\it Property} & {\it See} & \multicolumn{2}{l}{{\it Small extensions}} & \multicolumn{2}{l}{{\it Large subsemigroups}} \\
\midrule \endhead
\bottomrule \endfoot
Generators and relations \\
\quad Finite generation & \fullref{\S}{sec:fingen} & Y & (Trivial) & Y & \cite{jura_ideals} \\
\quad Finite presentation & \fullref{\S}{sec:finpres} & Y & \cite[Th.~4.1]{ruskuc_largesubsemigroups} & Y & \cite[Th.~1.3]{ruskuc_largesubsemigroups} \\
\quad Finite left-/right-cancellative presentation & \fullref{\S}{sec:cancpres} & Y & \cite[Th.~3]{crr_finind} & Y & \cite[Th.~3]{crr_finind} \\
\quad Finite cancellative presentation  & \fullref{\S}{sec:cancpres} & Y & \cite[Th.~2]{crr_finind} & Y & \cite[Th.~2]{crr_finind} \\
\quad Finite Malcev presentation & \fullref{\S}{sec:cancpres} & Y & \cite[Th.~1]{crr_finind} & Y & \cite[Th.~1]{crr_finind} \\
\quad Finite inverse semigroup presentation & \fullref{\S}{sec:invpres} & ? &  & ? &  \\
\quad Soluble word problem & \fullref{\S}{sec:wordproblem} & Y & \cite[Th.~5.1(i)]{ruskuc_largesubsemigroups} & Y & (Trivial) \\ 
\quad Finite complete rewriting system & \fullref{\S}{sec:fcrs} & Y & \cite[Th.1]{wang_fcrsfdt} & Y & \cite[Th.~1.1]{wong_fcrs} \\
Homological properties \\
\quad Finite derivation type & \fullref{\S}{sec:fdt} & Y & \cite[Th.1]{wang_fcrsfdt} & ? & \\
\quad $\FP_n$ & \fullref{\S}{sec:fpn} & ? & & N & \cite[\S~8]{gray_homological} \\
\quad Finite left-cohomological dimension & \fullref{\S}{sec:fcd} & ? & & N & \cite[\S~8]{gray_homological} \\
\quad Finite left- \& right-cohomological dimension & \fullref{\S}{sec:fcd} & ? & & N & \cite[\S~8]{gray_homological} \\
Residual finiteness & \fullref{\S}{sec:residualfin} & Y & \cite[Co.~4.6]{ruskuc_syntactic} & Y & (Trivial) \\
Periodicity and relation properties \\
\quad Local finiteness & \fullref{\S}{sec:localfin}  & Y & \cite[Th.~5.1(ii)]{ruskuc_largesubsemigroups} & Y & (Trivial) \\
\quad Local finite presentation & \fullref{\S}{sec:localfin}  & Y & \cite[Th.~5.1(iii)]{ruskuc_largesubsemigroups} & Y & (Trivial) \\
\quad Periodicity & \fullref{\S}{sec:localfin}  & Y & \cite[Th.~5.1(iv)]{ruskuc_largesubsemigroups} & Y & (Trivial) \\
\quad Global torsion & \fullref{\S}{sec:globaltorsion}  & Y & \cite[Th.~8.1]{gray_ideals} & N & \cite[Re~8.3]{gray_ideals} \\
\quad Eventual regularity & \fullref{\S}{sec:eventualreg}  & Y & \cite[Th.~9.2]{gray_ideals} & Y & \cite[Th.~9.2]{gray_ideals} \\
Hopficity \& co-hopficity \\*
\quad Hopficity & \fullref{\S}{sec:hopf} & N & \cite[\S~2]{maltcev_hopfian} & N & \cite[\S~2]{maltcev_hopfian} \\
\quad Hopficity \& finite generation & \fullref{\S}{sec:hopf} & Y & \cite[Main Th.]{maltcev_hopfian} & N & \cite[\S~5]{maltcev_hopfian} \\
\quad Co-hopficity & \fullref{\S}{sec:hopf} & N & \cite[Ex.~4.6]{cm_hopf} & N & \cite[Ex.~4.1]{cm_hopf} \\
\quad Co-hopficity \& finite generation & \fullref{\S}{sec:hopf} & Y & \cite[Th.~4.2]{cm_hopf} & N & \cite[Ex.~4.1]{cm_hopf} \\
Subsemigroups \\
\quad Finitely many subsemigroups & \fullref{\S}{sec:ideals} & Y & \cite[\S~11]{ruskuc_largesubsemigroups} & Y & \cite[\S~11]{ruskuc_largesubsemigroups} \\
\quad All subsemigroups are large & \fullref{\S}{sec:ideals} & Y & \cite[\S~11]{ruskuc_largesubsemigroups} & Y & \cite[\S~11]{ruskuc_largesubsemigroups} \\
\quad Minimal subsemigroup & \fullref{\S}{sec:ideals} & Y & \cite[\S~11]{ruskuc_largesubsemigroups} & N & \cite[\S~11]{ruskuc_largesubsemigroups} \\
\multicolumn{6}{l}{Ideals, Green's relations, and related properties}\\
\quad Finitely many left ideals & \fullref{\S}{sec:ideals} & Y & \cite[Th.~10.4]{ruskuc_largesubsemigroups} & Y & \cite[Th.~
10.4]{ruskuc_largesubsemigroups} \\
\quad All left ideals are large & \fullref{\S}{sec:ideals} & N & \cite[Th.~10.5]{ruskuc_largesubsemigroups} & Y & \cite[Th.~10.5]{ruskuc_largesubsemigroups} \\
\quad Minimal left ideal & \fullref{\S}{sec:ideals} & Y & \cite[Th.~10.3]{ruskuc_largesubsemigroups} & N & \cite[Th.~10.3]{ruskuc_largesubsemigroups} \\
\quad Finitely many ideals & \fullref{\S}{sec:ideals} & Y & \cite[\S~11]{ruskuc_largesubsemigroups} & Y & \cite[Th.~5.1]{gray_ideals} \\
\quad All ideals are large & \fullref{\S}{sec:ideals} & N & \cite[Th.~10.5]{ruskuc_largesubsemigroups} & ? & \\
\quad Minimal ideal & \fullref{\S}{sec:ideals} & Y & \cite[\S~11]{ruskuc_largesubsemigroups} & N & \cite[\S~11]{ruskuc_largesubsemigroups} \\
\quad $\gJ = \gD$ & \fullref{\S}{sec:ideals} & Y & \cite[Th.~4.1]{gray_ideals} & N & \cite[Ex.~4.6]{gray_ideals} \\
\quad Stability & \fullref{\S}{sec:ideals} & Y & \cite[Th.~3.2]{gray_ideals} & Y & \cite[Th.~3.2]{gray_ideals} \\
\quad $\min_{\gR}$ & \fullref{\S}{sec:ideals} & Y & \cite[Th.~6.1]{gray_ideals} & Y & \cite[Th.~6.1]{gray_ideals} \\
\quad $\min_{\gJ}$ & \fullref{\S}{sec:ideals} & Y & \cite[Th.~6.4]{gray_ideals} & Y & \cite[Th.~6.4]{gray_ideals} \\
Automata \\*
\quad Automaticity & \fullref{\S}{sec:auto} & Y & \cite[Th.~1.1]{hoffmann_autofinrees} & Y & \cite[Th.~1.1]{hoffmann_autofinrees} \\*
\quad Asynchronous automaticity & \fullref{\S}{sec:auto} & ? & & Y & \cite[Th.~10.2]{cgr_greenindex} \\
\quad Word-hyperbolicity & \fullref{\S}{sec:wordhyp} & ? & & Y & \fullref{Theorem}{thm:wordhyplarge} \\
\quad Markovicity & \fullref{\S}{sec:markov} & Y & \cite[Th.~16.1]{cm_markov} & Y & \cite[Th.~16.1]{cm_markov} \\
\quad Automatic presentation & \fullref{\S}{sec:fap} & N & \cite[Pr.~6.3]{cort_const} & Y & \cite[Pr.~6.1]{cort_const} \\
\quad Unary automatic presentation & \fullref{\S}{sec:fap} & N & \cite[Ex.~33]{crt_unaryfa} & Y & \cite[Pr.~31]{crt_unaryfa} \\
Geometric properties \\
\quad Hyperbolicity & \fullref{\S}{sec:hyperbolic} & ? & & N & (Trivial) \\
\quad Hyperbolicity \& finite geometric type & \fullref{\S}{sec:hyperbolic} & Y & \fullref{Theorem}{thm:hyperbolicfgt} & Y & \fullref{Theorem}{thm:hyperbolicfgt} \\
Bergman's property & \fullref{\S}{sec:bergman} & Y & \cite[Th.~3.2]{maltcev_bergman} & ? & \\
\end{longtable}
\end{landscape}

If one wants to prove analogues of such results in semigroup theory,
some notion of the index of a subsemigroup is required. There are
several definitions, each with its own advantages, which we will
briefly survey (see~\fullref{\S~}{sec:alternative}). But in respect of
obtaining Reidemeister--Schreier-type results, practice has shown
that the most successful definition is the following:

\begin{definition}
The \defterm{Rees index} of a subsemigroup $T$ of a semigroup $S$ is
defined as $|S-T|+1$. In this case $T$ is a \defterm{large
  subsemigroup} of $S$, and $S$ is a \defterm{small extension} of $T$.
\end{definition}

The definition was introduced by Jura~\cite{jura_ideals}, and in the
case where $T$ is an ideal, the Rees index of $T$ is $S$ is the
cardinality of the factor semigroup $S/T$.

The most interesting feature of Reidemeister--Schreier type results
for Rees index are the rewriting techniques often involved in proving
or disproving the inheritance of a given finiteness condition by large
subsemigroups and small extensions.

In this paper we aim to survey the known results on Rees index. This
will be not only a comprehensive description of quite a large number
of Reidemeister--Schreier type results, but will also show how finite
Rees index interacts with geometric conditions on semigroups, and how
it preserves certain other (non-finiteness) semigroup properties. We
will also prove some new results and provide new counterexamples, some
of which sharpen previously-known results.

For reader's convenience, we summarize all results in
\fullref{Table}{tbl:summary}.

\section{Alternative notions of index}
\label{sec:alternative}

A major weakness in the notion of Rees index is that it does not
generalize the group index. Indeed, even the notion of \emph{finite}
Rees index does not generalize finite group index: if $G$ is an
infinite group and $H$ a proper subgroup, then the Rees index of $H$
in $G$ is always infinite. To address this, three other notions of
semigroup index have been suggested as alternatives to the Rees index.

The earliest was introduced by Grigorchuk~\cite{grigorchuk_semigroups}:

\begin{definition}
Let $S$ be a semigroup and $T$ a subsemigroup of $S$. The subsemigroup
$T$ of $S$ has finite \defterm{Grigorchuk index} if there exists a
finite subset $F$ such that for every $s\in S$ there exists $f\in F$
with $sf\in T$.
\end{definition}

This notion of index works perfectly in generalizing the celebrated
Gromov Growth Theorem to the class of finitely generated can\-cel\-la\-tive
semigroups, but fails to preserve such basic finiteness conditions as
finite generation.

The next alternative is the (right) syntactic index, introduced by
Ru\v{s}kuc \& Thomas~\cite{ruskuc_syntactic}:

\begin{definition}
Let $S$ be a semigroup and $T$ a subsemigroup of $S$. Let $\sigma$ be
the relation $(T \times T) \cup ((S-T) \times (S-T))$. Let
$\sigma_{\rm R}$ and $\sigma_{\rm L}$ be, respectively, the largest
right congruence and largest left congruence contained in
$\sigma$. The \defterm{right syntactic index} of $T$ in $S$, denoted
$[S:T]_{\rm R}$, is the number of $\sigma_{\rm R}$-classes in
$S$. Similarly, the \defterm{left syntactic index} $[S:T]_{\rm L}$ of
$T$ in $S$ is the number of $\sigma_{\rm L}$-classes in $S$.
\end{definition}

In other words, $\sigma_{\rm R}$ and $\sigma_{\rm L}$ are the largest
right congruence and largest left congruence on $S$ that respect $T$
(that is, for which $T$ is a union of congruence classes).

The right syntactic index of $T$ in $S$ is finite if and only if the
left syntactic index of $T$ in $S$ is finite
\cite[Theorem~3.2(iii)]{ruskuc_syntactic}. It therefore makes sense to
state that a subsemigroup is of finite syntactic index. Notice further
that if $T$ is a large subsemigroup of $S$, then $T$ has
finite syntactic index in $S$ \cite[Corollary~4.4]{ruskuc_syntactic}.

The syntactic indices have an important advantage over the Rees index:
they are generalizations of the group index: by
\cite[Theorem~3.2(iii)]{ruskuc_syntactic}, if $G$ and $H$ are groups,
then
\[
[G:H] = [G:H]_{\rm R} = [G:H]_{\rm L}.
\]

However, the syntactic indices fail miserably when it comes to
the inheritance of finiteness properties by finite syntactic index
subsemigroups or extensions: any property of semigroups is either not
inherited by finite syntactic index subsemigroups or not inherited by
finite syntactic index extensions
\cite[Theorem~3.5]{ruskuc_syntactic}. The proof of this relies on a
semigroup with a zero adjoined, but even in the relatively
`group-like' situation of group-embeddable semigroups, finite
syntactic index subsemigroups and extensions do not inherit common
finiteness properties \cite[\S~9.3]{c_phdthesis}.

The most recent proposed alternative notion of index is the Green index, introduced by
Gray \& Ru\v{s}kuc \cite{gray_green1}:

\begin{definition}
Let $S$ be a semigroup and let
$T$ be a subsemigroup of $S$.
For $u,v \in S$ define:
\[
u \rel{R}^T v \iff uT^1 = vT^1, \quad u \rel{L}^T v \iff  T^1u = T^1v,
\]
and $\rel{H}^T = \rel{R}^T \cap \rel{L}^T$. Each of these relations is
an equivalence relation on $S$; their equivalence classes are called
the ($T$-)\defterm{relative} $\rel{R}$-, $\rel{L}$-, and
$\rel{H}$-classes, respectively. Furthermore, these relations respect
$T$, in the sense that each $\rel{R}^T$-, $\rel{L}^T$-, and
$\rel{H}^T$-class lies either wholly in $T$ or wholly in $S - T$. The
\defterm{Green index} of $T$ in $S$ is one more than the number of
relative $\rel{H}$-classes in $S \setminus T$.
\end{definition}

If $H$ is a finite-index subgroup of a group $G$, then $H$ has finite
Green index in $G$ \cite[Proposition~8]{gray_green1}. If $T$ is a
large subsemigroup of $S$, then $T$ has finite Green index in $S$
\cite[Proposition~6]{gray_green1}. In this sense, the Green index is a
common generalization of both the Rees and group indices.

The Green index seems more successful than the syntactic index in
terms of the inheritance of finiteness properties by finite Green
index subsemigroups or extensions. For example, finite Green index
subsemigroups and extensions inherit finiteness and more generally
local finiteness, periodicity, having finitely many right
(respectively, left) ideals \cite[Theorem~2(I--III)]{gray_green1},
finite generation \cite[Theorems~4.1 \&~4.3]{cgr_greenindex}, and
finite Malcev presentation
\cite[Theorem~7.1]{cgr_greenindex}. Automaticity is inherited by
finite Green index subsemigroups but not by finite Green index
extensions \cite[Theorem~10.1 \&
  Example~10.3]{cgr_greenindex}. Finitely presentability is not
inherited by finite Green index extensions; it is not known it is
inherited by finite Green index subsemigroups
\cite[Question~6.2(i)]{cgr_greenindex}.

\section{Generators and relations}

\subsection{Finite generation}
\label{sec:fingen}

The preservation of finite generation on passing to small extensions
is immediate: one can simply take a finite generating set for the
original semigroup and add all the elements in the complement to get a
finite generating set for the extension.

The other direction is less straightforward. Jura \cite{jura_ideals}
describes a generating set for a large subsemigroup that is sufficient
to show finite generation is inherited, but it seems to be a `dead
end': there seems to be no way to use it to obtain a presentation for
subsemigroups.

However, Campbell et al.~\cite[\S~3]{campbell_reidemeister} devised a
way to define a generating set for a large subsemigroup that can serve
to rewrite a presentation for a semigroup into a presentation for a
subsemigroup. Let us outline their ideas. Suppose $S$ is a semigroup
with a subsemigroup $T$. (For the present, do \emph{not} assume $T$ is
a large subsemigroup.) Let $A$ be an alphabet representing a
generating set for $S$. Define
\[
L(A,T) = \{w \in A^+ : w \in T\}.
\]
Choose a set $C \subseteq A^*$ such that every element of $S-T$ is
represented by a unique word in $C$, and every element of $C$
represents some element of $S-T$. For any $w \in S - T$, let $\rep{w}$
be the unique representative of $w$ in $C$.

Let $D$ be the alphabet
\[
\{d_{\rho, a,\sigma} : \rho,\sigma \in C \cup \{\emptyword\}, a \in A,
\rho a, \rho a\sigma \in L(A,T)\},
\]
and that, for all $\rho$, $a$, and $\sigma$, the letter $d_{\rho,
  a,\sigma}$ represents $\rho a\sigma$.

Define a mapping $\phi : L(A,T) \to D^+$ as follows. Let $w
\in L(A,T)$ with $w'a$ being the shortest prefix of $w$ lying in
$L(A,T)$ and $w''$ being the remainder of $w$. Then
\[
w\phi = \begin{cases}
d_{\rep{w'},a,\rep{w''}} & \text{ if $w'' \notin L(A,T)$,}\\
d_{\rep{w'},a,\emptyword}(w''\phi) & \text{ if $w'' \in L(A,T)$.}
\end{cases}
\]

This mapping $\phi$ rewrites words in $L(A,T)$ to words over $D$
representing the same element of $T$. In particular, every element of
$T$ is represented by some word over $D$; hence $D$ generates $T$
\cite[Theorem~3.1]{campbell_reidemeister}.

Now, if $A$ is finite and $T$ is a large subsemigroup, then $D$ is
finite. Together with the obvious fact that if $X$ generates $T$, then
$X \cup S-T$ generates $S$, this proves the following result:

\begin{theorem}
Finite generation is inherited by small extensions and by large
subsemigroups.
\end{theorem}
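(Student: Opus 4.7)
The plan is to handle the two directions of the theorem separately, leveraging the machinery already developed in the excerpt.

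For the small extension direction, essentially no work is needed. Suppose $T$ is finitely generated, with finite generating set $Y$, and suppose $S$ is a small extension of $T$, so that $S - T$ is finite. Then every element of $S$ is either in $S - T$, and so expressible trivially, or in $T$, and so expressible as a product of elements of $Y$. Hence $Y \cup (S - T)$ is a finite generating set for $S$.

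For the large subsemigroup direction, I would use the construction of the set $C$, the alphabet $D$, and the rewriting map $\phi$ that is set up in the excerpt. Fix a finite generating alphabet $A$ for $S$ and assume $T$ is large, so $S - T$ is finite. The first observation is that the set $C$ of unique representatives of elements of $S - T$ can be chosen finite, since $S - T$ is finite; in particular, one can take one word representative per element. The second observation is that $D$ is then finite: each letter $d_{\rho,a,\sigma}$ is indexed by $\rho,\sigma \in C \cup \{\emptyword\}$ and $a \in A$, and each of these three index sets is finite.

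The remaining content is then precisely the assertion that $D$ generates $T$, which is the content of the cited \cite[Theorem~3.1]{campbell_reidemeister}. I would sketch the verification by induction on word length: given $w \in L(A,T)$, the definition of $\phi$ splits $w$ at its shortest prefix $w'a$ lying in $L(A,T)$ and either terminates (if the remainder $w''$ represents an element of $S - T$, which is then coded by a single letter $d_{\rep{w'},a,\rep{w''}}$) or recurses on the strictly shorter suffix $w''$ (which itself lies in $L(A,T)$). In either case, checking that the letter(s) of $D$ produced represent exactly the element $w$ of $T$ is a direct unwinding of the definition of $d_{\rho,a,\sigma}$ as representing $\rho a \sigma$. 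Since every element of $T$ is represented by some word in $L(A,T)$, it follows that $D$ generates $T$.

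The main obstacle, conceptually, is ensuring that the recursive rewriting $\phi$ is well-defined and truly terminates in words over $D$ of the correct value; this is the crux of the Campbell et al. construction and is where the care in the definition (splitting at the shortest prefix in $L(A,T)$) is essential. Once this is in hand, the theorem follows immediately from the finiteness of $D$ established above.
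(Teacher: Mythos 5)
Your proposal is correct and follows essentially the same route as the paper: the small-extension direction is the trivial observation that $Y \cup (S-T)$ generates $S$, and the large-subsemigroup direction rests on the finiteness of the alphabet $D$ (via the finiteness of $C$ and $A$) together with the fact that the rewriting map $\phi$ witnesses that $D$ generates $T$, exactly as in the cited Campbell et al.\ construction.
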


Jura \cite{jura_ideals} proved that finite generation is inherited by
large ideals using a different technique; the result for large
subsemigroups was first proved by Campbell et
al.~\cite[Corollary~3.2]{campbell_reidemeister}.

\subsection{Finite presentations}
\label{sec:finpres}

Proving that finite presentation is inherited by small extensions is
straightforward:

\begin{theorem}[{\cite[Theorems~4.1 \&~6.1]{ruskuc_largesubsemigroups}}]
\label{thm:finpres}
Finite presentation is inherited by small extensions and by large
subsemigroups.
\end{theorem}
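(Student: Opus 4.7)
The plan is to take a finite presentation $\langle A \mid R \rangle$ of $T$ and adjoin the finite set $B = S - T$ as new generators. It then suffices to encode how letters of $B$ multiply with each other and with letters of $A$: for every pair $(x,y) \in (A \cup B)^2$ with $x \in B$ or $y \in B$, the product $xy$ in $S$ either lies in $T$, in which case we fix a word $w_{x,y} \in A^+$ representing it, or lies in $B$, in which case it is already a single letter. Adjoining the finitely many resulting relations $xy = w_{x,y}$ (or $xy = b$) to $R$ yields a finite presentation of $S$, because any word over $A \cup B$ can be reduced using these relations to either a single letter of $B$ (if it represents an element of $B$) or a word over $A$ (if it represents an element of $T$), after which $R$ completes the job.

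\textbf{Large subsemigroups.} Here the plan is to apply the Reidemeister--Schreier-style rewriting $\phi$ set up just before the theorem statement. Starting from a finite presentation $\langle A \mid R \rangle$ of $S$, enlarge $A$ so that every element of $S - T$ is named by a letter of $A$ and serves as its own representative in $C$; since $S - T$ is finite, $A$ and hence the alphabet $D$ remain finite, and $D$ already generates $T$ by the discussion in \fullref{\S}{sec:fingen}. I would now take as defining relations of $T$: (a) for each $(u,v) \in R$ and each pair $(\rho,\sigma) \in (C \cup \{\emptyword\})^2$ for which $\rho u \sigma \in L(A,T)$, the relation $(\rho u \sigma)\phi = (\rho v \sigma)\phi$ (which lies in $D^+$ because $\rho u \sigma = \rho v \sigma$ in $S$, hence in $T$); and (b) a finite family of \emph{concatenation} relations certifying that $\phi$ is compatible with splitting a word in $L(A,T)$ at an occurrence of a subword in $L(A,T)$, recorded by the finite data of the letter and short prefix/suffix that can occur at the seam. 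Both families are finite because $A$, $R$, and $C$ are all finite.

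\textbf{Main obstacle.} The delicate part is verifying that these relations actually \emph{define} $T$, rather than merely holding in $T$. The plan is: given $w_1, w_2 \in D^+$ with $w_1 =_T w_2$, translate each $d_{\rho,a,\sigma}$ back to $\rho a \sigma \in A^+$ to obtain a pair of words over $A$ representing the same element of $S$, hence connected by a sequence of applications of $R$; then push this sequence back through $\phi$, using family (a) to absorb each application of a relation from $R$ in its context $(\rho,\sigma)$ and family (b) to reconcile $\phi$ with the fact that the rewriting is not literally a morphism $A^+ \to D^+$. Ensuring that every intermediate position in every intermediate word matches one of the finitely many boundary patterns tabulated in (b) is the genuine technical work, but once this bookkeeping is accomplished the resulting presentation of $T$ is finite as required.
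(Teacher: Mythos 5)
Your small-extensions argument is essentially the paper's: adjoin $S-T$ as new generators and add, for each product with at least one factor in $S-T$, a relation expressing that product as a word over $A$ or as a single new letter. This is exactly Ru\v{s}kuc's relation set $\rel{S}$ and is correct.

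The large-subsemigroups half has a genuine gap. You correctly set up the rewriting mapping $\phi$ and arrive at the infinite presentation $\sgpres{D}{\rel{Q}}$ of \fullref{Theorem}{thm:rewritingpres}, but the entire mathematical content of this half of the theorem is the proof that a \emph{specific finite} subset of $D^+\times D^+$ has all of the relations \eqref{eq:rewritingpres} as consequences --- and that is precisely what your proposal defers as ``the genuine technical work''. Moreover, the finite family you propose is not obviously adequate. Your family (a) covers only contexts $\rho u\sigma$ with $\rho,\sigma\in C\cup\{\emptyword\}$, whereas the third family in \eqref{eq:rewritingpres} requires arbitrary contexts $w_3,w_4\in A^*$; reducing the latter to the former via your family (b) presupposes that the effect on $\phi$ of applying a relation $(u,v)\in\rel{R}$ is confined to a bounded window around the application site. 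It is not: $\phi$ factorizes its input by repeatedly extracting the \emph{shortest} prefix lying in $L(A,T)$, so replacing $u$ by $v$ inside $w_3uw_4$ can change which prefixes of the entire remaining suffix lie in $L(A,T)$ and hence completely reorganize the $D$-letters produced to the right of the seam. Controlling this global effect is why Ru\v{s}kuc's argument is a long and intricate case division (delicate enough that a flaw in one case was only repaired later by Gray \& Ru\v{s}kuc), and why he takes as his finite relation set \emph{all} relations in $D^+\times D^+$ up to a certain length that hold in $T$, rather than a structured family such as yours. Until the boundary patterns of (b) are exhibited precisely and the induction showing that every relation of \eqref{eq:rewritingpres} follows from (a) and (b) is actually carried out, the finite presentability of $T$ has not been established.
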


The proof for small extensions proceeds as follows: Let $T$ be a
finitely presented semigroup and let $S$ be a small extension of
$T$. Let $\sgpres{A}{\rel{R}}$ be a finite presentation of $T$. For $a
\in A$ and $s,s' \in S-T$, fix words $\rho_{sa},\lambda_{as},\pi_{ss'}
\in A^* \cup (S-T)$ such that $sa =_S \rho_{sa}$, $as =_S
\lambda_{as}$, $ss' = \pi_{ss'}$. Let
\[
\rel{S} = \{sa = \rho_{sa}, as = \lambda_{sa}, ss' = \pi_{ss'} : a \in A, s,s' \in S-T\};
\]
notice that $\rel{S}$ is finite since $A$ and $S-T$ are finite. Then
one can show that $\sgpres{A \cup (S-T)}{\rel{R}\cup\rel{S}}$ is a
finite presentation for $S$.

The proof for large subsemigroups is much more complicated. Let $S$,
$T$, $A$, and $D$ be as in \fullref{\S}{sec:fingen}. Define another
mapping $\psi : D^+ \to L(A,T)$ by extending the mapping $d_{\rho,
  a,\sigma} \mapsto \rho a\sigma$ to $D^+$ in the natural way. Notice
that $w$ and $w\psi$ represent the same element of $T$.

\begin{theorem}[{\cite[Theorem~2.1]{campbell_reidemeister}}]
\label{thm:rewritingpres}
The subsemigroup $T$ is presented by $\sgpres{D}{\rel{Q}}$, where
$\rel{Q}$ contains the following infinite collection of defining
relations:
\begin{equation}
\label{eq:rewritingpres} 
\left.
\qquad
\qquad
\qquad
\begin{aligned}
(\rho a\sigma)\phi &= d_{\rho, a,\sigma}  \\
(w_1w_2)\phi &= (w_1\phi)(w_2\phi) \\
(w_3uw_4)\phi &= (w_3vw_4)\phi, 
\end{aligned}
\qquad
\qquad
\qquad
\right\}
\end{equation}
(where $\rho,\sigma \in C \cup \{\emptyword\}$, $a \in A$, $\rho
a,\rho a\sigma\in
L(A,T)$, $w_1,w_2 \in L(A,T)$, $w_3,w_4 \in A^*$, $(u,v) \in
\rel{P}$, $w_3uw_4 \in L(A,T)$). 
\end{theorem}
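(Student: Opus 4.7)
The plan is to prove the standard two directions for verifying a presentation: soundness (every relation in $\rel{Q}$ holds in $T$) and completeness (if two words over $D$ represent the same element of $T$, then they are equivalent modulo $\rel{Q}$). Soundness yields a well-defined surjective homomorphism from $\sgpres{D}{\rel{Q}}$ onto $T$ (surjectivity was already established in the excerpt, where $D$ was shown to generate $T$), and completeness promotes it to an isomorphism.

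Soundness is routine: each of the three families equates two words that, by the element-preserving property of $\phi$, represent the same element of $T$. For family~(1), both $d_{\rho,a,\sigma}$ and $(\rho a\sigma)\phi$ represent $\rho a\sigma$. For family~(2), both sides represent $w_1w_2 \in T$. For family~(3), both sides represent $w_3 u w_4 =_S w_3 v w_4 \in T$, using $(u,v) \in \rel{P}$.

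For completeness, I would factor the argument through the ``canonical'' expressions $(w\psi)\phi$. First, by induction on the length of $w \in D^+$, I would show $w \sim_{\rel{Q}} (w\psi)\phi$: the base case uses family~(1) applied to a single letter $d_{\rho,a,\sigma}$ (giving $(\rho a\sigma)\phi = d_{\rho,a,\sigma}$ directly), and the inductive step $w = w_1 w_2$ uses family~(2) applied to $(w_1\psi)(w_2\psi) = (w_1w_2)\psi$, combined with the observation that each $w_i\psi$ lies in $L(A,T)$ because $w_i$ represents an element of $T$. Second, I would show that for $z_1, z_2 \in L(A,T)$ with $z_1 =_S z_2$, one has $z_1\phi \sim_{\rel{Q}} z_2\phi$: since $\sgpres{A}{\rel{P}}$ presents $S$, there is a chain $z_1 = y_0 \to y_1 \to \cdots \to y_m = z_2$ of elementary $\rel{P}$-rewritings, and family~(3) gives $y_i\phi \sim_{\rel{Q}} y_{i+1}\phi$ at each step. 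Combining: if $w =_T w'$ in $D^+$, then $w\psi =_S w'\psi$, so $(w\psi)\phi \sim_{\rel{Q}} (w'\psi)\phi$ by the second step and $w \sim_{\rel{Q}} (w\psi)\phi$, $w' \sim_{\rel{Q}} (w'\psi)\phi$ by the first, whence $w \sim_{\rel{Q}} w'$.

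The main obstacle needing care is the last step of completeness: family~(3) only rewrites within $L(A,T)$, so I must confirm that the intermediate words $y_i$ in the $\rel{P}$-rewriting chain stay inside $L(A,T)$. This is the sort of issue that often causes trouble in Reidemeister--Schreier-style arguments for semigroups, but here it resolves cleanly: every $y_i$ represents the common $S$-element shared by $z_1$ and $z_2$, which lies in $T$, so each $y_i$ automatically belongs to $L(A,T) = \{w \in A^+ : w \in T\}$ and $\phi$ is defined on it. Once this observation is in place, the remainder of the argument is straightforward bookkeeping.
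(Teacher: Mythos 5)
Your proposal is correct, and it is essentially the standard argument for this theorem (which the survey states only by citation to Campbell et al., without reproducing a proof): soundness of the three relation families, plus completeness factored through $w \sim_{\rel{Q}} (w\psi)\phi$ and the simulation of elementary $\rel{P}$-rewritings by the third family. You also correctly identify and dispose of the one point that genuinely needs checking, namely that every intermediate word in a $\rel{P}$-rewriting chain between two words of $L(A,T)$ stays in $L(A,T)$ because membership in $L(A,T)$ depends only on the element of $S$ represented.
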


\fullref{Theorem}{thm:rewritingpres} in general gives an infinite
presentation for the subsemigroup $T$. Under the assumption that $T$
is a large subsemigroup of $S$, and that $S$ is finitely presented,
Ru{\v{s}}kuc proves that there is a finite set of relations $\rel{S}
\subseteq D^+ \times D^+$ (with $u =_T v$ for all $(u,v) \in \rel{S}$)
such that the relations \eqref{eq:rewritingpres} all lie in
$\cgen{\rel{S}}$ (that is, are all consequences of the relations in
$\rel{S}$); hence, under these assumptions, $T$ is finitely presented
by $\sgpres{D}{\rel{S}}$. Ru{\v{s}}kuc specifies the set $\rel{S}$ as
consisting of all relations in $D^+ \times D^+$ up to a certain length
that hold in $T$. The proof that this set $\rel{S}$ suffices is a very
long and intricate division into cases. [Gray \& Ru{\v{s}}kuc
  \cite[\S~5]{gray_boundaries} later observed and described how to fix
  a slight problem in one of the cases.]

\subsection{Cancellative and Malcev presentations}
\label{sec:cancpres}

Ordinary semigroup presentations define semigroups by means of
generators and defining relations. Informally, Malcev presentations
define semigroups by means of generators, defining relations, and a
rule of group-embeddability. Similarly, can\-cel\-la\-tive (respectively,
left-can\-cel\-la\-tive, right-can\-cel\-la\-tive) presentations define a
semigroup by means of generators, defining relations, and a rule of
can\-cel\-la\-tiv\-ity (respectively, left-can\-cel\-la\-tiv\-ity,
right-can\-cel\-la\-tiv\-ity). [Spehner \cite{spehner_presentations}
  introduced Malcev presentations and named them for Malcev's
  group-embeddability condition \cite{malcev_immersion1}; Croisot
  \cite{croisot_automorphisms} introduced can\-cel\-la\-tive presentations;
  Adjan \cite{adjan_defining} introduced left-can\-cel\-la\-tive and
  right-can\-cel\-la\-tive presentations.] 

Spehner and Adjan showed that a rule of group-embeddability,
can\-cel\-la\-tiv\-ity, left-can\-cel\-la\-tiv\-ity, or right-can\-cel\-la\-tiv\-ity is worth
an infinite number of defining relations, in the sense that a finitely
generated semigroup may admit a finite Malcev presentation, but no
finite can\-cel\-la\-tive presentation
(see~\cite[Theorem~3.4]{spehner_presentations}); a finite can\-cel\-la\-tive
presentation, but no finite left- or right-can\-cel\-la\-tive presentation
(see~\cite[Theorem~3.1(ii)]{spehner_presentations}
and~\cite[Theorem~I.4]{adjan_defining}); a finite left-can\-cel\-la\-tive
presentation, but no finite `ordinary' or right-can\-cel\-la\-tive
presentation (see~\cite[Theorem~3.1(i)]{spehner_presentations}
and~\cite[Theorem~I.2]{adjan_defining}); a finite right-can\-cel\-la\-tive
presentation, but no finite `ordinary' or left-can\-cel\-la\-tive
presentation (see~\cite[Theorem~3.1(i)]{spehner_presentations}
and~\cite[Theorem~I.2]{adjan_defining}). [For further background
  information on Malcev presentations, see the survey \cite{c_malcev}.]

Let us now formally define Malcev and (left-/right-)can\-cel\-la\-tive presentations.

\begin{definition}
Let $S$ be any semigroup. A congruence $\sigma$ on $S$ is:
\begin{itemize}
\item a \defterm{Malcev congruence} if $S/\sigma$ is embeddable in a
  group.
\item a \defterm{can\-cel\-la\-tive congruence} if $S/\sigma$ is a
  can\-cel\-la\-tive semigroup.
\item a \defterm{left-can\-cel\-la\-tive congruence} if $S/\sigma$ is a
  left-can\-cel\-la\-tive semigroup.
\item a \defterm{right-can\-cel\-la\-tive congruence} if $S/\sigma$ is a
  right-can\-cel\-la\-tive semigroup.
\end{itemize}
\end{definition}

If $\{\sigma_i : i \in I\}$ is a set of Malcev (respectively,
can\-cel\-la\-tive, left-can\-cel\-la\-tive, right-can\-cel\-la\-tive) congruences on
$S$, then $\sigma = \bigcap_{i \in I} \sigma_i$ is also a Malcev
(respectively, can\-cel\-la\-tive, left-can\-cel\-la\-tive, right-can\-cel\-la\-tive)
congruence on $S$ (see \cite[Proposition~1.2.2]{c_phdthesis} and
\cite[Lemma~9.49]{clifford_semigroups1}).

\begin{definition}
Let $A^+$ be a free semigroup; let $\rho \subseteq A^+ \times A^+$ be
any binary relation on $A^+$. Let $\malcgen{\rho}$ denote the smallest
Malcev congruence containing $\rho$:
\[
\malcgen{\rho} = \bigcap \left\{\sigma : \sigma \supseteq \rho, \text{
  $\sigma$ is a Malcev congruence on }A^+\right\}.
\]
Then $\sgmpres{A}{\rho}$ is a \defterm{Malcev presentation} for [any
  semigroup isomorphic to] $A^+\!/\malcgen{\rho}$. 

Similarly, let $\ccgen{\rho}$ (respectively $\lccgen{\rho}$,
$\rccgen{\rho}$) denote the smallest can\-cel\-la\-tive (respectively,
left-can\-cel\-la\-tive, right-can\-cel\-la\-tive) congruence containing
$\rho$. Then $\sgcpres{A}{\rho}$ (respectively, $\sglcpres{A}{\rho}$,
$\sgrcpres{A}{\rho}$) is a \defterm{can\-cel\-la\-tive (respectively,
  left-can\-cel\-la\-tive, right-can\-cel\-la\-tive) presentation} for [any
  semigroup isomorphic to] $A^+\!/\ccgen{\rho}$ (respectively,
$A^+\!/\lccgen{\rho}$, $A^+\!/\rccgen{\rho}$).
\end{definition}

\begin{theorem}[{\cite[Theorems~1--3]{crr_finind}}]
Within the class of group-embeddable (respectively, can\-cel\-la\-tive,
left-can\-cel\-la\-tive, right-can\-cel\-la\-tive) semigroup, finite Malcev
(respectively, can\-cel\-la\-tive, left-can\-cel\-la\-tive, right-can\-cel\-la\-tive)
presentation is inherited by small extensions and by large subsemigroups.
\end{theorem}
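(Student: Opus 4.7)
The four cases---Malcev, can\-cel\-la\-tive, left-can\-cel\-la\-tive, right-can\-cel\-la\-tive---run in parallel, so I sketch only the Malcev case. The plan is to reuse the constructions from the proofs of the corresponding statements for ordinary finite presentations (\fullref{Theorem}{thm:finpres}) and to verify that the resulting finite relation sets already present the target semigroup as a quotient by the smallest \emph{Malcev} congruence. Three ingredients recur throughout: the hypothesis that both $T$ and $S$ lie in the group-embeddable class, so that kernels of natural surjections from free semigroups are automatically Malcev congruences; the existence of smallest Malcev congruences (guaranteed by closure under intersection); and a \emph{restriction lemma} asserting that the trace on $A^+$ of a Malcev congruence on $(A \cup B)^+$ is a Malcev congruence on $A^+$, which follows because the trace-quotient embeds in the group-embeddable full quotient and subsemigroups of group-embeddable semigroups are group-embeddable.

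For small extensions, start with a finite Malcev presentation $\sgmpres{A}{\rel{R}}$ of $T$ and form $\sgmpres{A \cup (S-T)}{\rel{R} \cup \rel{S}}$, where $\rel{S}$ is the finite set of relations encoding the products $sa$, $as$, and $ss'$ for $a \in A$ and $s,s' \in S-T$, exactly as in Ru\v{s}kuc's proof of the ordinary case. The kernel of the natural map $(A \cup (S-T))^+ \to S$ is a Malcev congruence containing $\rel{R} \cup \rel{S}$ and so contains $\malcgen{\rel{R} \cup \rel{S}}$. For the reverse inclusion, iterated use of the rules in $\rel{S}$ reduces every word to either a single letter of $S-T$ or an element of $A^+$; two words with the same image in $S$ either reduce to the same single letter, or both reduce into $A^+$ to representatives of the same element of $T$, in which case the Malcev presentation of $T$ together with the restriction lemma places them in the same $\malcgen{\rel{R} \cup \rel{S}}$-class.

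For large subsemigroups, let $\sgmpres{A}{\rel{P}}$ be a finite Malcev presentation of $S$ and regard $\sgpres{A}{\malcgen{\rel{P}}}$ as an (infinite) ordinary presentation of $S$. Applying the Reidemeister--Schreier rewriting of \fullref{Theorem}{thm:rewritingpres} produces an infinite ordinary presentation $\sgpres{D}{\rel{Q}}$ of $T$, and the task reduces to exhibiting a finite set $\rel{S} \subseteq D^+ \times D^+$ of valid $T$-relations whose Malcev closure equals the kernel $\kappa$ of $D^+ \to T$; this kernel is itself a Malcev congruence, because $T \leq S$ inherits group-embeddability.

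The main obstacle is precisely this last step. Ru\v{s}kuc's extraction argument in the ordinary case exploits the finiteness of $\rel{P}$ to bound the third family of relations in \eqref{eq:rewritingpres}, whereas here $\malcgen{\rel{P}} \setminus \rel{P}$ is typically infinite. The plan is to take $\rel{S}$ to be Ru\v{s}kuc's finite set built using only the pairs in $\rel{P}$ itself, and to show that every additional relation in $\rel{Q}$ arising from a pair in $\malcgen{\rel{P}} \setminus \rel{P}$ is absorbed in passing from $\cgen{\rel{S}}$ to $\malcgen{\rel{S}}$. The guiding idea is that the quotient $\tilde{T} = D^+\!/\malcgen{\rel{S}}$ is group-embeddable by construction, and the inverse rewriting $\psi$ transports each such extra relation into a consequence that must already hold in every group enveloping $\tilde{T}$; a careful combination of the restriction lemma with the group-embeddability of $S$ should then force the canonical surjection $\tilde{T} \to T$ to be an isomorphism. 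The can\-cel\-la\-tive, left-can\-cel\-la\-tive, and right-can\-cel\-la\-tive cases follow identical outlines, substituting the appropriate weaker class-preservation property for group-embeddability and the matching analogue of the restriction lemma at each step.
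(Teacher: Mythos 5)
Your small-extensions half is sound and matches the paper: the authors state that this direction is proved for all four presentation types by reasoning parallel to the small-extensions part of \fullref{Theorem}{thm:finpres}, which is exactly what you do. Your restriction lemma (the trace on $A^+$ of a Malcev congruence on $(A\cup(S-T))^+$ is a Malcev congruence, since the trace-quotient embeds in a group-embeddable semigroup) is also correct and is a legitimate ingredient there.

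The large-subsemigroups half has a genuine gap, and it is concentrated exactly where you flag ``the main obstacle.'' You propose to run Ru\v{s}kuc's rewriting on the infinite ordinary presentation $\sgpres{A}{\malcgen{\rel{P}}}$, keep only the finite set $\rel{S}$ built from $\rel{P}$, and argue that the relations of $\rel{Q}$ coming from pairs in $\malcgen{\rel{P}}\setminus\rel{P}$ are ``absorbed'' in passing to $\malcgen{\rel{S}}$ because they ``must already hold in every group enveloping $\tilde{T}$.'' But knowing that such a relation holds in every group containing $\tilde{T}=D^+\!/\malcgen{\rel{S}}$ requires knowing the universal group of $\tilde{T}$, and relating that to the universal group of $S$ is precisely the content of the theorem --- so the argument as stated is circular, and ``should then force the canonical surjection to be an isomorphism'' is not a proof. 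The paper's actual route for the Malcev case avoids rewriting entirely: a pigeon-hole argument shows that every element of $S-T$ is both a left quotient and a right quotient of elements of $T$ (\cite[Lemma~3.2]{crr_finind}), whence the universal groups of $S$ and $T$ coincide (\cite[Theorem~3.1]{crr_finind}); since a finite Malcev presentation is essentially a finite presentation of the universal group, the result follows. This identification of universal groups is the missing idea that closes your loop. Separately, your final sentence asserting that the cancellative and one-sided-cancellative cases ``follow identical outlines'' is contradicted by the paper, which emphasizes that each type needs a different technique: those cases \emph{do} use rewriting, but equality modulo a (left-/right-)cancellative presentation is witnessed by controlled insertion and deletion of words $\mall{u}u$ and $u\malr{u}$, and one must guarantee the inserted $u$ represents an element of $T$; this forces a reduction to the largest ideal contained in $T$ (cancellative case) or the largest right ideal (right-cancellative case, with a further case split when $S-K$ is a subgroup). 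None of that machinery is present in, or derivable from, your outline.
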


In each case, the proof that the property is inherited by small
extensions can be proved using reasoning parallel to that described
above for \fullref{Theorem}{thm:finpres}. In contrast, the proof for
inheritance by large subsemigroups requires different proof techniques
for each type of presentation. First, the result for Malcev
presentations does not use any kind of rewriting technique: the key to
proving it is to use the pigeon-hole principle to deduce that if $S$
is a group-embeddable semigroup and $T$ is a large subsemigroup of
$S$, then every element of $S-T$ can be expressed as both a right
quotient and a left quotient of elements of $T$
\cite[Lemma~3.2]{crr_finind}. (These quotients are formed in the
universal group $G$ of $S$. The universal group of $S$ is the largest
group into which $S$ embeds and which $S$ generates; see
\cite[Ch.~12]{clifford_semigroups2}.) From this, it follows that the
universal groups of $S$ and $T$ are isomorphic
\cite[Theorem~3.1]{crr_finind}. A Malcev presentation for a semigroup
is essentially a presentation for its universal group
\cite[Proposition~1.3.1]{c_phdthesis}, and so the result follows.

However, the proofs for can\-cel\-la\-tive and left-/right-can\-cel\-la\-tive
presentations do make use of the rewriting techniques of Campbell et
al. Let $S$ be can\-cel\-la\-tive or left-can\-cel\-la\-tive as appropriate (the
reasoning for right-can\-cel\-la\-tive is dual) and let $T$ be a large
subsemigroup of $S$. Suppose $S$ admits a finite can\-cel\-la\-tive
presentation $\sgcpres{A}{\rel{R}}$ or finite left-can\-cel\-la\-tive
presentation $\sglcpres{A}{\rel{R}}$. The proofs make use of certain
syntactic rules that show when two words are equal in a semigroup
defined by a (left-/right-)can\-cel\-la\-tive presentation. Essentially
these syntactic rules allow the application of the defining relations
(just as in ordinary semigroup presentations), together with insertion
and deletion of words $\mall{u}u$ and $u\malr{u}$ under certain
restrictions. [The maps $u \mapsto \mall{u}$ and $u \mapsto \malr{u}$
  extend bijections from $A$ onto alphabets $\mall{A}$ and $\malr{A}$,
  and function in a similar way to element--inverse pairs, but their
  use is restricted.]

The proofs use the reasoning used to establish the large subsemigroups
part of \fullref{Theorem}{thm:finpres} in a purely syntactic way, in
the sense that it guarantees the existence of a finite set of
relations that have certain other relations as consequences. Extra
work is necessary to ensure that when we insert and delete word
$\mall{u}u$ and $u\malr{u}$, the word $u$ represents an element of the
large subsemigroup $T$. In the can\-cel\-la\-tive case, this is possible by
reducing to proving the result for large ideals by considering the
largest ideal $I$ contained in $T$, which also has finite Rees index
in $S$ as a consequence of can\-cel\-la\-tiv\-ity
\cite[Theorem~7.1]{crr_finind}. In the right-can\-cel\-la\-tive case, we
pass to the largest right ideal $K$, which again has finite Rees index
in $S$ \cite[Theorem~8.1]{crr_finind}, and then split into the cases
where (i) every element of $S-K$ has a right-multiple lying in $K$, or
(ii) $S-K$ is a subgroup and a right ideal of $S$. In case (i), the
same reasoning works as in the can\-cel\-la\-tive case; in case (ii), $S$
must be finite and the result holds trivially.

\subsection{Inverse semigroup presentations}
\label{sec:invpres}

An inverse semigroup presentation $\invpres{A}{\rel{R}}$ presents the
semigroup
\[
(A \cup A^{-1})^+/(\rel{R} \cup \rho)^\#,
\]
where $A^{-1}$ is an alphabet in bijection with $A$ under the map $a
\mapsto a^{-1}$ and
\begin{align*}
\rho ={}& \{(uu^{-1}u,u) : u \in (A \cup A^{-1})^+\} \\
&\cup \{(uu^{-1}vv^{-1},vv^{-1}uu^{-1}) : u,v \in (A \cup A^{-1})^+\}.
\end{align*}
Essentially, the relations $\rho$ ensure that every element has an
inverse and that idempotents commute, which is one of the
characterizations of an inverse semigroup
\cite[Theorem~5.1.1]{howie_fundamentals}. Note further that $\rho$ is infinite, so a
finite inverse semigroup presentation cannot be converted into a
finite semigroup presentation. Furthermore, there exist inverse
semigroups that admit finite inverse semigroup presentations but which
are not finitely presented.

A narrowly circulated preprint from around a decade ago contained a
theorem stating that if $S$ is an inverse semigroup and $T$ is an
inverse subsemigroup of finite Rees index in $S$, then $S$ has a
finite inverse semigroup presentation if and only if $T$ has a finite
inverse semigroup
presentation~\cite[Main~Theorem]{araujo_presentations}. This preprint
has never, to our knowledge, been published or given general
circulation, so we are uncertain if we can view as settled the
question of whether having a finite inverse semigroup presentation is
preserved on passing to large subsemigroups or small extensions
(within the class of inverse semigroups).

\subsection{Soluble word problem}
\label{sec:wordproblem}

\begin{theorem}[{\cite[Theorem~5.1(ii)]{ruskuc_largesubsemigroups}}]
Having soluble word problem is inherited by small extensions and by large
subsemigroups.
\end{theorem}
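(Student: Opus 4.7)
For the direction concerning large subsemigroups, the argument is essentially trivial. Suppose $S$ has soluble word problem with respect to a finite generating set $B$. By the preceding theorem on finite generation, $T$ is finitely generated, say by a finite set $D \subseteq T$. For each $d \in D$, fix a word $\hat{d} \in B^+$ representing it (such a word exists because $B$ generates $S$ and $d \in T \subseteq S$). To decide whether $u,v \in D^+$ are equal in $T$, substitute each letter $d$ occurring in $u$ or $v$ by $\hat{d}$, obtaining words $\hat{u},\hat{v} \in B^+$, and invoke the word-problem algorithm for $S$. Since $T$ embeds in $S$ as a subsemigroup, $u =_T v$ if and only if $\hat{u} =_S \hat{v}$.

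For the direction concerning small extensions, let $T$ be finitely generated by $A$ with soluble word problem, and let $S$ be a small extension, so $S-T$ is finite. Take $B = A \cup (S-T)$ as a generating set for $S$. The plan is to rewrite any $w \in B^+$ to a \emph{normal form} that is either a single letter $s \in S-T$ (when $w$ represents that element) or a word in $A^+$ (when $w$ represents an element of $T$). Because $B \times B$ is finite, the small extension is determined by a finite multiplication table specifying, for each $b,b' \in B$, whether $bb' \in T$ or $bb' \in S-T$, together with either a chosen representative word $w_{b,b'} \in A^+$ or the specific letter of $S-T$ it equals. The normal form of $w = b_1 \cdots b_n$ is computed left to right by maintaining a current normal form $c$ and updating it on input $b_{i+1}$: if $c \in S-T$, the product $c \cdot b_{i+1}$ is read directly from the table; if $c = u \in A^+$ and $b_{i+1} \in A$, then $c$ becomes $ub_{i+1}$; finally, if $c = u = a_1 \cdots a_k \in A^+$ and $b_{i+1} = s \in S-T$, one scans $u$ from the right, iteratively computing $a_j \cdot s^{(j)}$ via the table (starting with $s^{(k)} = s$) until either the product lies in $A^+$, in which case its chosen representative is spliced in to yield a word in $A^+$, or the entire scan ends inside $S-T$, leaving a single letter of $S-T$ as the normal form. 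Equality of $w_1,w_2 \in B^+$ is then decided by comparing normal forms: if both lie in $S-T$, check equality as letters; if both lie in $A^+$, invoke $T$'s word problem; if one of each type, they represent distinct elements.

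The principal obstacle in the second direction is precisely the case where one must multiply an unbounded word over $A$ by a letter of $S-T$: a naive approach risks non-terminating or exponentially branching work. The right-to-left scanning trick terminates in at most $k$ table lookups, one per letter of $u$, and is the crux of the argument. Beyond this, the proof is routine bookkeeping, resting on the legitimate assumption that the finite multiplication table describing how $S-T$ interacts with $B$ is part of the data specifying the small extension $S$.
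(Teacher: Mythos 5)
Your proposal is correct and follows essentially the same route as the paper: the large-subsemigroup direction is the standard observation that soluble word problem passes to finitely generated subsemigroups (using that $T$ is finitely generated by the finite-generation theorem), and the small-extension direction uses the finite collection of relations $sa=\rho_{sa}$, $as=\lambda_{as}$, $ss'=\pi_{ss'}$ from the finite-presentation argument to rewrite any word over $A\cup(S-T)$ effectively into $A^+\cup(S-T)$, reducing the problem to the word problem of $T$ plus equality checking in the finite set $S-T$. Your right-to-left scan merely makes explicit the termination of the rewriting that the paper leaves implicit.
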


The class of semigroups with soluble word problem is closed under
forming finitely generated subsemigroups, so in one direction this
result is immediate. In the other direction, let $A$ be an alphabet
representing a generating set for $T$. Then $S$ is generated by $A
\cup (S-T)$. Using the finitely many extra relations added in the
small extension part proof of \fullref{Theorem}{thm:finpres}, one can
start from a word $u \in (A \cup (S-T))^+$ and effectively obtain a
word $u' \in A^+ \cup (S-T)$ representing the same element of
$S$. This reduces the word problem of $S$ to the word problem of $T$
plus checking equality within the finite set $S-T$.

\subsection{Finite complete rewriting systems}
\label{sec:fcrs}

A semigroup presentation can be naturally viewed as a rewriting
system. The most computationally friendly situation is when this
rewriting system is finite and complete. (Recall that a rewriting
system is complete if it is confluent and noetherian.) For further
background on rewriting systems, see~\cite{book_srs}.

\begin{theorem}[{\cite[Theorem~1]{wang_fcrsfdt} \& \cite[Theorem~1.1]{wong_fcrs}}]
Being presented by a finite complete rewriting system is inherited by
small extensions and by large subsemigroups.
\end{theorem}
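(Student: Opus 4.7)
My plan for the small-extensions direction is to extend the given finite complete rewriting system $(A, \rel{R})$ for $T$ in the same manner used for \fullref{Theorem}{thm:finpres}. I would take the generating alphabet to be $A \cup (S-T)$ and, for each two-letter product involving at least one letter of $S-T$---that is, $sa$, $as$, and $ss'$ with $a \in A$ and $s, s' \in S-T$---adjoin a rewriting rule $sa \to \rho_{sa}$, $as \to \lambda_{as}$, $ss' \to \pi_{ss'}$, where each right-hand side is a fixed canonical form for the product in $S$: a single letter of $S-T$ if the product lies in $S-T$, and an $\rel{R}$-normal form over $A$ if the product lies in $T$. Since both $A$ and $S-T$ are finite, this produces a finite rewriting system.

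For termination I would use a weighted-length ordering assigning weight $N$ to each letter of $S-T$ and weight $1$ to each letter of $A$, with $N$ chosen larger than the length of any right-hand side appearing in a new rule. Every new rule then strictly decreases the weight, and on words in $A^+$ the extended system agrees with $\rel{R}$, which is already noetherian. Confluence reduces to checking critical pairs, and the only ones not already resolved by $\rel{R}$ arise from length-three words containing a letter of $S-T$ in a position where two overlapping new rules apply; such overlaps resolve because the two competing reductions both evaluate the same product in $S$, after which the resulting $A^+$-words are joinable by confluence of $\rel{R}$. Newman's lemma then yields completeness.

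The large-subsemigroups direction, due to Wong \cite{wong_fcrs}, is considerably more delicate. The plan is to begin from a finite complete rewriting system $(A, \rel{R})$ for $S$ and to generate $T$ by the finite alphabet $D$ of Campbell et al.\ described in \fullref{\S}{sec:fingen}. One then seeks a rewriting system on $D^+$ whose rules arise from rewriting two-letter $D$-products via the map $\phi$, using the $\rel{R}$-normal forms over $A$ as a canonical means of identifying the element of $T$ being represented. The main obstacle will be to simultaneously ensure that this $D$-system is noetherian---which requires a well-founded ordering on $D^+$ derived from, but not identical to, the $\rel{R}$-ordering on $A^+$---and that finitely many $D$-rules suffice for confluence. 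The latter reduces to a case analysis in the style of Ru\v{s}kuc's finite-presentation argument sketched for \fullref{Theorem}{thm:finpres}: one shows that the defining relations \eqref{eq:rewritingpres} up to a certain bounded length already imply the remaining, infinitely many, critical-pair resolutions, after which completeness of the original system does the rest.
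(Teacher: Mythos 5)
Your choice of generating alphabet $A \cup (S-T)$ and of the added rules $sa \to \rho_{sa}$, $as \to \lambda_{as}$, $ss' \to \pi_{ss'}$ matches Wang's construction, but your termination argument has a genuine gap. The weighted-length order (weight $N$ on letters of $S-T$, weight $1$ on letters of $A$) is decreased by the new rules, but it is \emph{not} decreased by the rules of $\rel{R}$: a finite complete rewriting system need not be length-reducing, so an $\rel{R}$-rule applied inside a mixed word can increase the weight. A reduction sequence in the combined system may interleave $\rel{R}$-steps (raising the weight) with new-rule steps (lowering it), so your order is not a reduction order for the union, and noetherianity of a union of rewriting systems does not follow from noetherianity of the parts. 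This is precisely the point identified as the hard part of Wang's proof: one needs a genuinely more intricate well-order on $(A \cup (S-T))^*$ that is compatible with \emph{all} of the rules simultaneously. Your confluence check is also incomplete: you consider only overlaps between two new rules, but a new rule $sa \to \rho_{sa}$ also overlaps with any $\rel{R}$-rule whose left-hand side begins with $a$; moreover, resolving a critical pair requires exhibiting a common reduct, which does not follow merely from the observation that both branches represent the same element of $S$ --- that is what completeness would eventually guarantee, not something you may assume while proving it.

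For the large-subsemigroups direction your plan names the right ingredients ($D$, $\phi$, $\psi$) but does not engage with the actual difficulty, and the step ``bound the critical pairs as in Ru\v{s}kuc's finite-presentation argument'' does not address noetherianity of the $D$-system, which is the central obstacle. Wong \& Wong proceed differently: they first replace the given system for $S$ by a new finite complete rewriting system $\sgpres{A'}{\rel{R}'}$ in which every element of $S-T$ is represented by a single letter, and only then define the alphabet $B$, the maps $\phi$ and $\psi$, and the set $K(A,T)$, verifying a six-part technical condition on the tuple $(\sgpres{B}{\rel{S}},K(A,T),\psi,\phi)$ that guarantees $\sgpres{B}{\rel{S}}$ is a finite complete rewriting system presenting $T$. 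Without some such preliminary normalization and an explicit well-founded order on the new alphabet, your sketch remains a statement of intent rather than a proof.
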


The proof for small extensions, due to Wang
\cite[Theorem~1]{wang_fcrsfdt}, is a natural strengthening of the
small extensions part of the proof of
\fullref{Theorem}{thm:finpres}. Retaining notation from that proof, if
$\sgpres{A}{\rel{R}}$ is a finite complete rewriting system, then the
presentation $\sgpres{A}{\rel{R} \cup \rel{S}}$ for $S$ is also a
finite complete rewriting system. The most complicated part is proving
that $\sgpres{A}{\rel{R} \cup \rel{S}}$ forms a noetherian rewriting
system, which involves defining a rather intricate well-order on $(A
\cup (S-T))^*$ such that rewriting always decreases a word with
respect to this order.

The proof for large subsemigroups, due to Wong \& Wong
\cite[Theorem~1.1]{wong_fcrs}, is much more difficult. It builds upon
and strengthens the strategy of the large subsemigroups part of the
proof of \fullref{Theorem}{thm:finpres}. Retain the notation of that
proof and suppose further that $\sgpres{A}{\rel{R}}$ be a finite
complete rewriting system presenting $S$.  Let $K(A,T)$ be a subset of
$L(A,T)$ that includes all irreducible words representing elements of
$T$. (Recall from \fullref{\S}{sec:fingen} that $L(A,T)$ is the
language of all words over $A$ representing elements of $T$.)  Let
$\sgpres{B}{\rel{S}}$ be a finite complete rewriting system, $\psi :
B^+ \to L(A,T)$ a homomorphism, and $\phi : K(A,T) \to B^+$ a map. A
six-part technical condition is defined on the tuple
$(\sgpres{B}{\rel{S}},K(A,T),\psi,\phi)$. If the tuple satisfies this
condition, then $\sgpres{B}{\rel{S}}$ presents $T$. Part of the
technical condition is that $u\psi\phi = u$ for all $u \in B^+$; thus
$\psi$ and $\phi$ play a similar role to the corresponding maps in
\fullref{\SS}{sec:fingen} and~\ref{sec:finpres}.

A long argument, ultimately using the condition in the last paragraph
when in the case $T=S$, allows one to construct a finite complete
rewriting system $\sgpres{A'}{\rel{R}'}$ where every element of $S-T$
is represented by a symbol in $A'$. Equipped with this new rewriting
system for $S$, one can define the alphabet $B$, the relation
$\rel{S}$, the maps $\phi$ and $\psi$, and the set $K(A,T)$ and show
that the technical condition is satisfied and that thus
$\sgpres{B}{\rel{S}}$ is a finite complete rewriting system presenting
$T$.

\section{Homological and cohomological conditions}
\label{sec:homolo}

\subsection{Finite derivation type}
\label{sec:fdt}

Consider a semigroup presentation $\sgpres{A}{\rel{R}}$. The
\defterm{derivation graph} of this presentation is the infinite graph
$\Gamma = (V,E,\iota, \tau, ^{-1})$ with vertex set $V =
A^*$, and edge set $E$ consisting of the collection of $4$-tuples
\[
\{ (w_1, r, \epsilon, w_2): \ w_1, w_2 \in A^*, r \in \rel{R}, \ \mbox{and} \ \epsilon \in \{ +1, -1 \} \}.
\]
The functions $\iota, \tau : E \to V$ map an edge $\bbe = (w_1, r,
\epsilon, w_2)$ (with $r=(r_{+1},r_{-1}) \in \rel{R}$) to its initial
and terminal vertices $\iota \bbe = w_1 r_{\epsilon} w_2$ and $\tau
\bbe = w_1 r_{- \epsilon} w_2$, respectively. The mapping $^{-1} : E
\rightarrow E$ maps an edge $\bbe = (w_1, r, \epsilon, w_2)$ to its
inverse edge $\bbe^{-1} = (w_1, r, -\epsilon, w_2)$.

A path is a sequence of edges $\bbp = \bbe_1 \circ \bbe_2 \circ \ldots
\circ \bbe_n$ where $\tau \bbe_i = \iota \bbe_{i+1}$ for $i=1,
\ldots, {n-1}$. Here $\bbp$ is a path from $\iota \bbe_1$ to $\tau
\bbe_n$ and we extend the mappings $\iota$ and $\tau$ to paths by
defining $\iota \bbp = \iota \bbe_1$ and $\tau \bbp = \tau
\bbe_n$.  The inverse of a path $\bbp = \bbe_1 \circ \bbe_2 \circ
\ldots \circ \bbe_n$ is the path $\bbp^{-1} = \bbe_n^{-1} \circ
\bbe_{n-1}^{-1} \circ \ldots \circ \bbe_1^{-1}$, which is a path from
$\tau \bbp$ to $\iota \bbp$. A \defterm{closed path} is a path $\bbp$
satisfying $\iota \bbp = \tau \bbp$. For two paths $\bbp$ and
$\bbq$ with $\tau \bbp = \iota \bbq$ the composition $\bbp \circ
\bbq$ is defined.

We denote the set of paths in $\Gamma$ by $P(\Gamma)$, where for each
vertex $w \in V$ we include a path $1_w$ with no edges, called the
\emph{empty path} at $w$. The free monoid $A^*$ acts on both sides of
the set of edges $E$ of $\Gamma$ by
\[
x \cdot \bbe \cdot y = (x w_1, r, \epsilon, w_2 y)
\]
where $\bbe = (w_1, r, \epsilon, w_2)$ and $x, y \in A^*$. This
extends naturally to a two-sided action of $A^*$ on $P(\Gamma)$ where
for a path $\bbp = \bbe_1 \circ \bbe_2 \circ \ldots \circ \bbe_n$ we
define
\[
x \cdot \bbp \cdot y = (x \cdot \bbe_1 \cdot y) \circ (x \cdot \bbe_2 \cdot y)
\circ \ldots \circ (x \cdot \bbe_n \cdot y).
\]
If $\bbp$ and $\bbq$ are paths such that $\iota \bbp = \iota
\bbq$ and $\tau \bbp = \tau \bbq$ then $\bbp$ and $\bbq$ are
\defterm{parallel}, denotes $\bbp \parallel \bbq$.

An equivalence relation $\sim$ on $P(\Gamma)$ is called a
\emph{homotopy relation} if it is contained in $\parallel$ and
satisfies the following four conditions.


\begin{itemize}

\item[(H1)] If $\bbe_1$ and $\bbe_2$ are edges of $\Gamma$, then
\[
(\bbe_1 \cdot \iota \bbe_2) \circ (\tau \bbe_1 \cdot \bbe_2) \sim
(\iota \bbe_1 \cdot \bbe_2) \circ (\bbe_1 \cdot \tau \bbe_2 ).
\]

\item[(H2)] For any $\bbp, \bbq \in P(\Gamma)$ and $x,y \in A^*$
\[
\bbp \sim \bbq \implies x \cdot \bbp \cdot y \sim x \cdot \bbq \cdot y.
\]

\item[(H3)] For any $\bbp, \bbq, \bbr, \bbs \in P(\Gamma)$ with $\tau
  \bbr = \iota \bbp = \iota \bbq$ and $\iota \bbs =
  \tau \bbp = \tau \bbq$
\[
\bbp \sim \bbq \implies \bbr \circ \bbp \circ \bbs \sim \bbr \circ \bbq \circ \bbs.
\]

\item[(H4)] If $\bbp \in P(\Gamma)$ then $\bbp \bbp^{-1} \sim 1_{\iota
  \bbp}$, where $1_{\iota \bbp}$ denotes the empty path at the vertex
  $\iota \bbp$.

\end{itemize}

For a subset $C$ of $\parallel$, the homotopy relation \emph{$\sim_C$
  generated by $C$} is the smallest (with respect to inclusion)
homotopy relation containing $C$. If $\sim_C$ coincides with
$\parallel$, then $C$ is called a \emph{homotopy base} for
$\Gamma$. The semigroup presented by $\sgpres{A}{\rel{R}}$ is said to
have \emph{finite derivation type} (\fdt) if the derivation graph of
$\pres{A}{\rel{R}}$ admits a finite homotopy base. \fdt\ is
independent of the choice of finite presentation
\cite[Theorem~4.3]{squier_finiteness}.

\begin{theorem}[{\cite[Theorem~2]{wang_fcrsfdt}}]
Finite derivation type is inherited by small extensions.
\end{theorem}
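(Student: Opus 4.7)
The plan is to lift the small-extensions construction of \fullref{Theorem}{thm:finpres} to the level of derivation graphs. Start with a finite presentation $\sgpres{A}{\rel{R}}$ of $T$ admitting a finite homotopy base $C$ for its derivation graph $\Gamma$. As in the proof of that theorem, adjoin the finite set of ``multiplication'' relations
\[
\rel{S} = \{sa = \rho_{sa},\; as = \lambda_{as},\; ss' = \pi_{ss'} : a \in A,\, s,s' \in S-T\},
\]
so that $\sgpres{A \cup (S-T)}{\rel{R} \cup \rel{S}}$ is a finite presentation of $S$ with derivation graph $\Gamma'$; the task is to exhibit an explicit finite homotopy base $C'$ for $\Gamma'$.

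Oriented left-to-right, the relations in $\rel{S}$ form a terminating, length-reducing rewriting system on $(A \cup (S-T))^*$ whose normal forms are exactly the words in $A^* \cup (S-T) \cup \{\emptyword\}$. For each word $w$, fix a canonical reduction path $\nu_w$ in $\Gamma'$ from $w$ to its normal form (for instance, by always contracting the leftmost $\rel{S}$-redex). I would take $C'$ to consist of three finite families: (i) the lifts to $\Gamma'$ of the pairs in $C$, which are still parallel because $\Gamma$ is a subgraph of $\Gamma'$; (ii) for every critical overlap of two $\rel{S}$-rules inside a word of length at most three over $A \cup (S-T)$, a single pair witnessing that the two one-step resolutions of the overlap reach a common vertex via homotopic continuations; and (iii) for each $r \in \rel{R}$ and each one-letter context $s \in S-T$ adjacent to $r$, a single pair comparing the path ``apply $r$, then canonically $\rel{S}$-reduce'' with the path ``canonically $\rel{S}$-reduce, then apply the appropriate derived $\rel{R}$-edge inside the resulting $A^*$-word''. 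Each family is finite because $A \cup (S-T)$, $\rel{R}$, and the set of overlap patterns are finite.

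The verification proceeds by induction on path length in $\Gamma'$. Using (ii) and (iii) repeatedly, together with the automatic commutation (H1) for non-overlapping edges, one shows that every $\bbp \in P(\Gamma')$ is $\sim_{C'}$-equivalent to a path of the shape $\nu_{\iota \bbp} \circ \bbp^{\flat} \circ \nu_{\tau \bbp}^{-1}$, where $\bbp^{\flat}$ is supported entirely on words in $A^*$ and uses only $\rel{R}$-edges: one pushes each $\rel{R}$-edge ``inside'' the normal-form region through the $\rel{S}$-reductions. Applying this reduction to a parallel pair $\bbp \parallel \bbq$, the outer pieces $\nu_{\iota \bbp}$ and $\nu_{\tau \bbp}^{-1}$ agree, so the middle parts $\bbp^{\flat}$ and $\bbq^{\flat}$ become parallel paths inside $\Gamma$ itself, and the lifted finite base $C$ takes over to give $\bbp^{\flat} \sim_C \bbq^{\flat}$.

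The main obstacle will be the bookkeeping behind families (ii) and (iii): verifying that every way of interleaving $\rel{S}$-reductions and $\rel{R}$-applications can be massaged into the canonical ``normalise first, then apply $\rel{R}$'' shape using only the finitely many pairs in $C'$. This is a bounded local-confluence argument in the spirit of the finite complete rewriting system proof in \cite{wang_fcrsfdt}; because $\rel{S}$ is strongly length-reducing and $A \cup (S-T)$ is finite, every critical overlap sits inside a word of length at most three and the resolving sub-paths are bounded in length, which is what ultimately keeps $C'$ finite.
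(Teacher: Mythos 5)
Your proposal follows essentially the same route as the paper's (Wang's) proof: present $S$ by $\sgpres{A\cup(S-T)}{\rel{R}\cup\rel{S}}$, lift a finite homotopy base for $T$, and adjoin finitely many parallel pairs whose closure lets every edge $\bbe$ of the larger derivation graph be replaced by a path $\bbp\circ\bbq\circ\bbp'$ travelling into, through, and back out of the region $A^*\cup(S-T)$ --- your canonical reductions $\nu_w$ together with families (ii) and (iii) are exactly a concrete mechanism for generating those pairs, and your induction on path length is the standard way of finishing. One inaccuracy to fix: the left-to-right orientation of $\rel{S}$ is terminating but \emph{not} length-reducing, since $\rho_{sa}$, $\lambda_{as}$, $\pi_{ss'}$ may be arbitrarily long words over $A$; termination should instead be measured by the number of letters from $S-T$ (refined lexicographically by length), and the finiteness of $C'$ follows simply from the finiteness of $A$, $S-T$, and $\rel{R}$ rather than from any bound on the lengths of the resolving subpaths in (ii) and (iii), which indeed need not be bounded.
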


The essence of the proof is as follows: The semigroup $S$ has a finite
presentation $\pres{A \cup (S-T)}{\rel{R} \cup \rel{S}}$, where
$\pres{A}{\rel{R}}$ is a presentation for $T$. Take a finite homotopy
basis for $\pres{A}{\rel{R}}$. Add finitely many parallel paths to the
basis to generate pairs of parallel paths $\bbe \parallel \bbp\circ
\bbq\circ \bbp'$, where $\bbe$ is an edge between vertices in $(A \cup
(S-T))^*$, the paths $\bbp$ and $\bbp'$ are, respectively, from
$\iota\bbe$ and $\tau\bbe$ to some vertices in $A^* \cup (S-T)$, and
$\bbq$ is a path in $A^* \cup (S-T)$. Then this new basis in fact
generates all parallel paths in the derivation graph for $\pres{A \cup
  (S-T)}{\rel{R} \cup \rel{S}}$.

\begin{question}
\label{qu:fdt}
Is finite derivation type inherited by large subsemigroups?
\end{question}

Although this question remains open in general, the special case of
large ideals has been settled:

\begin{theorem}[{\cite[Theorem~1]{malheiro_fdt}}]
Finite derivation type is inherited by large ideals.
\end{theorem}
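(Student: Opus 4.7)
The plan is to lift the rewriting machinery from the large-subsemigroups part of \fullref{Theorem}{thm:finpres} to the level of derivation graphs. Let $\sgpres{A}{\rel{R}}$ be a finite presentation of $S$ and let $\rel{B}$ be a finite homotopy base for its derivation graph $\Gamma_S$. The critical feature of the ideal hypothesis is that the language $L(A,T)$ of words over $A$ representing elements of $T$ is closed under one-step rewriting by $\rel{R}$: if $w_1 u w_2 \in L(A,T)$ and $(u,v) \in \rel{R}$, then $w_1 v w_2 \in L(A,T)$ as well. Consequently, the full subgraph $\Gamma_S^T$ of $\Gamma_S$ spanned by $L(A,T)$ inherits every edge incident to its vertices, and one can verify from (H1)--(H4) that any two parallel paths of $\Gamma_S$ whose endpoints lie in $L(A,T)$ are $\rel{B}$-homotopic via a homotopy that stays entirely inside $\Gamma_S^T$.

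The second step is to transport everything to the finite presentation $\sgpres{D}{\rel{S}}$ of $T$ produced by \fullref{Theorem}{thm:finpres} (together with \fullref{Theorem}{thm:rewritingpres}), using the rewriting maps $\phi : L(A,T) \to D^+$ and $\psi : D^+ \to L(A,T)$. Both maps preserve the represented element and so naturally lift to maps between the edges of $\Gamma_S^T$ and the edges of the derivation graph $\Gamma_T$ of $\sgpres{D}{\rel{S}}$. I would take as a candidate homotopy base $\rel{B}_T$ for $\Gamma_T$ the union of (i) the $\phi$-images of the pairs in $\rel{B}$ whose endpoints lie in $L(A,T)$; (ii) a finite family of pairs of the form $(\bbp, (\bbp\psi)\phi)$ certifying that $\phi\circ\psi$ is homotopic to the identity on each of finitely many "canonical" edges of $\Gamma_T$; and (iii) a finite family of bookkeeping pairs that realise the remaining relations from the infinite presentation of \fullref{Theorem}{thm:rewritingpres} as $\rel{S}$-consequences. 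That each of these three collections may be chosen finite is where the finiteness of $D$, of $C$, and of $S-T$ in the large-ideal setting is used.

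To verify that $\rel{B}_T$ is a homotopy base, given parallel paths $\bbp \parallel \bbq$ in $\Gamma_T$ I would apply $\psi$ to obtain parallel paths $\bbp\psi \parallel \bbq\psi$ in $\Gamma_S^T$; these are $\rel{B}$-homotopic inside $\Gamma_S^T$ by the closure observation above; applying $\phi$ and invoking the pairs in (i) then yields a $\rel{B}_T$-homotopy between $(\bbp\psi)\phi$ and $(\bbq\psi)\phi$; and the pairs in (ii) close the loop back to $\bbp$ and $\bbq$. The hard part is managing item (ii): on individual letters $d_{\rho,a,\sigma}$ the composite $\phi\circ\psi$ is not the identity, and the discrepancy between an edge $\bbe$ of $\Gamma_T$ and its round trip $(\bbe\psi)\phi$ must be decomposed using only finitely many local patterns. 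This is exactly the point at which the ideal hypothesis is essential: the fact that \fullref{Question}{qu:fdt} remains open for arbitrary large subsemigroups reflects precisely the difficulty of controlling these round-trip discrepancies in the absence of the closure property of $L(A,T)$.
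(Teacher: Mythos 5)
Your overall architecture does match the paper's: the paper likewise first produces an \emph{infinite} homotopy base for a presentation of $T$ by rewriting through $\phi$ and $\psi$ (this is \fullref{Theorem}{thm:rewritinghomobasis}, whose three families of parallel paths correspond closely to your items (i)--(iii), including the round-trip correction paths $\Lambda_v$ from $v$ to $v\phi\psi$), and then argues that in the large-ideal case a finite subset generates all of them. But your proposal has a genuine gap precisely at the step that makes the theorem true for ideals and leaves \fullref{Question}{qu:fdt} open for general large subsemigroups. The property you single out as ``the critical feature of the ideal hypothesis'' --- that $L(A,T)$ is closed under one-step rewriting by $\rel{R}$ --- holds for \emph{every} subsemigroup $T$: if $(u,v)\in\rel{R}$ then $w_1uw_2$ and $w_1vw_2$ represent the same element of $S$, so one lies in $L(A,T)$ if and only if the other does. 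Likewise, your observation that a $\rel{B}$-homotopy between parallel paths with endpoints in $L(A,T)$ stays inside $\Gamma_S^T$ is automatic for any subsemigroup, since all vertices on all intermediate paths represent the same element of $T$. So nothing in your first step distinguishes ideals from arbitrary large subsemigroups.

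The consequence is that your finiteness claim --- ``that each of these three collections may be chosen finite is where the finiteness of $D$, of $C$, and of $S-T$ \ldots is used'' --- cannot be right as stated: those three sets are finite for any large subsemigroup, so if they sufficed the argument would settle \fullref{Question}{qu:fdt} affirmatively. The ideal-specific property that actually does the work is absorption: for an ideal, $w_1 w w_2 \in L(A,T)$ for \emph{all} $w_1,w_2\in A^*$ whenever $w\in L(A,T)$, which is what allows one to decompose $\phi$ applied to products such as $w_1\cdot\bbp\cdot w_2$ (the third family in \eqref{eq:rewritinghomobasis}, ranging over infinitely many $w_1,w_2$) into finitely many local patterns; for a general subsemigroup the side words $w_1,w_2$ satisfying $w_1\iota\bbp w_2\in L(A,T)$ form an uncontrollable set. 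Relatedly, your item (iii) conflates two different levels: realising the relations of \fullref{Theorem}{thm:rewritingpres} as consequences of a finite set $\rel{S}$ gives a finite \emph{presentation}, but a homotopy base must generate parallel \emph{paths}, which is a strictly stronger, two-dimensional requirement; this is exactly the content of \cite[\S~8]{malheiro_fdt} that your proposal asserts rather than proves.
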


The proof of this result uses the following rewriting theorem:

\begin{theorem}[{\cite[Theorem~4.1]{malheiro_trivializers}}]
\label{thm:rewritinghomobasis}
Retain notation from \fullref{Theorem}{thm:rewritingpres}. For any $v
\in D^*$, let $\Lambda_v$ be a path from $v$ to $v\phi\psi$ in the
derivation graph of $\sgpres{D}{\rel{Q}}$. (Such paths always exist.)
If $X$ is a homotopy basis of $\sgpres{A}{\rel{R}}$, then a homotopy
basis for $\sgpres{D}{\rel{Q}}$ is the set of all parallel
paths of the form
\begin{equation}
\label{eq:rewritinghomobasis}
\left.
\qquad
\qquad
\qquad
\begin{aligned}
(\bbe \circ \Lambda_{\tau\bbe},{}&\Lambda_{\iota\bbe} \circ \bbe\psi\phi), \\
(\bbe_1\circ \bbe_2, {}& \bbe_2\circ \bbe_1)\phi, \\
(w_1\cdot \bbp\cdot w_2,{}& w_1\cdot \bbq\cdot w_2)\phi,
\end{aligned}
\qquad
\qquad
\qquad
\right\} 
\end{equation}
where $\bbe$ is any edge in the derivation graph of
$\sgpres{D}{\rel{Q}}$; and $\bbe_1,\bbe_2$ are any edges in the
derivation graph of $\sgpres{A}{\rel{R}}$ such that
$\iota\bbe_1\iota\bbe_2 \in L(A,T)$; and $(\bbp,\bbq) \in X$ and
$w_1,w_2 \in A^*$ are such that $w_1\iota\bbp w_2 \in L(A,T)$.
\end{theorem}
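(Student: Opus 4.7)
The plan is to show that the homotopy relation $\sim$ generated by the three displayed families is all of $\parallel$ on the derivation graph of $\sgpres{D}{\rel{Q}}$. I would proceed in two stages: first, use the first family to normalize each path into a canonical form built from a path in the derivation graph of $\sgpres{A}{\rel{R}}$ sandwiched between the fixed connecting paths $\Lambda_u$ and $\Lambda_v^{-1}$; second, use the hypothesis that $X$ is a homotopy basis for $\sgpres{A}{\rel{R}}$, together with the second and third families, to show that the canonical forms of any two parallel paths are $\sim$-equivalent.

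For the normalization stage, the first family gives $\bbe\circ\Lambda_{\tau\bbe} \sim \Lambda_{\iota\bbe}\circ \bbe\psi\phi$ for every edge $\bbe$ of $\sgpres{D}{\rel{Q}}$, which, after composing with $\Lambda_{\tau\bbe}^{-1}$ and using (H4) and (H3), rearranges to $\bbe \sim \Lambda_{\iota\bbe}\circ \bbe\psi\phi\circ \Lambda_{\tau\bbe}^{-1}$. Telescoping this identity along the successive edges of an arbitrary path $\bbP\colon u\to v$, and using (H2) and (H3) to propagate $\sim$ through concatenation, yields $\bbP \sim \Lambda_u\circ \bbP\psi\phi\circ \Lambda_v^{-1}$. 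Consequently, for two parallel paths $\bbP_1,\bbP_2\colon u\to v$, it suffices to prove that $\bbP_1\psi\phi \sim \bbP_2\psi\phi$.

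For the identification stage, observe that $\bbP_1\psi$ and $\bbP_2\psi$ are parallel paths in the derivation graph of $\sgpres{A}{\rel{R}}$, both joining $u\psi$ to $v\psi$; since $X$ is a homotopy basis there, they are connected by a finite sequence of elementary moves of types (H1)--(H4) and insertions of pairs from $X$. I would push this sequence forward through $\phi$: applications of (H1) between two edges whose images lie in $L(A,T)$ translate directly into instances of the second family $(\bbe_1\circ\bbe_2,\bbe_2\circ\bbe_1)\phi$ in~\eqref{eq:rewritinghomobasis}; applications of pairs from $X$ in a context $w_1\cdot{-}\cdot w_2$ with $w_1\iota\bbp w_2\in L(A,T)$ translate into the third family $(w_1\cdot\bbp\cdot w_2,w_1\cdot\bbq\cdot w_2)\phi$; and the (H4)-insertions are already provided by (H4) in $\sgpres{D}{\rel{Q}}$.

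The hard part, as I anticipate it, is that the intermediate vertices appearing in the $\sim_X$-derivation between $\bbP_1\psi$ and $\bbP_2\psi$ need not lie in $L(A,T)$, so $\phi$ cannot be literally applied to them; moreover, the decomposition of a word in $L(A,T)$ as a product of $L(A,T)$-factors that $\phi$ respects is not unique. To circumvent this, the strategy is to reroute any excursion outside $L(A,T)$, or between incompatible $L(A,T)$-decompositions, by using the first family to step into and out of $\sgpres{D}{\rel{Q}}$, combined with the second family to commute disjoint rewrites until a suitable decomposition is reached; a double induction on the length of the excursion and on the number of rewrites needing to be commuted should yield the desired local reduction. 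Once all intermediate vertices have been forced into $L(A,T)$ under a compatible parsing, the third family then lifts the homotopy from $\sgpres{A}{\rel{R}}$ to $\sgpres{D}{\rel{Q}}$, giving $\bbP_1\psi\phi\sim\bbP_2\psi\phi$ and completing the proof.
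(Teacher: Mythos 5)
A preliminary remark: the survey gives no proof of this theorem --- it is quoted from Malheiro \cite[Theorem~4.1]{malheiro_trivializers} with only surrounding discussion --- so there is no proof in the paper to compare against, and I can only assess your argument directly. Your first stage is correct and standard: the first family together with (H3) and (H4) gives $\bbe\sim\Lambda_{\iota\bbe}\circ\bbe\psi\phi\circ\Lambda_{\tau\bbe}^{-1}$ for every edge $\bbe$, telescoping along a path from $u$ to $v$ gives equivalence with $\Lambda_{u}\circ(\,\cdot\,)\psi\phi\circ\Lambda_{v}^{-1}$, and the problem correctly reduces to showing that the $\psi\phi$-images of two parallel paths are equivalent. (You do pass silently over the well-definedness of $\bbe\psi$, and hence of $\bbe\psi\phi$, for edges of $\sgpres{D}{\rel{Q}}$: one must fix, for each defining relation in $\rel{Q}$, a path in the derivation graph of $\sgpres{A}{\rel{R}}$ between the $\psi$-images of its two sides, compatibly with the $D^*$-action. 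This is routine but is part of the data the theorem presupposes and should be stated.)

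The gap is in your second stage, and it stems from a misreading of $L(A,T)$. You declare the crux to be that intermediate vertices in the $\sim_X$-derivation connecting the two $\psi$-images ``need not lie in $L(A,T)$'', and you dispose of this by an unspecified ``double induction'' that reroutes excursions outside $L(A,T)$. But $L(A,T)=\{w\in A^+ : w\in T\}$ is the \emph{full} preimage of $T$ in $A^+$, not a set of normal forms: every vertex of a path in the derivation graph of $\sgpres{A}{\rel{R}}$ represents the same element of $S$ as its endpoints, so any path whose endpoints lie in $L(A,T)$ has all its vertices in $L(A,T)$, and the same holds for every intermediate path in the chain of elementary transformations witnessing $\sim_X$. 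The difficulty you set out to overcome therefore does not arise, and the machinery you invoke for it is not a proof (no induction hypothesis or reduction step is actually specified), yet you make the correct part of your argument conditional on it. What the second stage genuinely requires is the direct check that each elementary transformation pushes through $\phi$: an application of $(\bbp,\bbq)\in X$ in context $w_1\cdot\bbp\cdot w_2$ becomes an instance of the third family (the contexts are built into that family precisely because $\phi$ does not commute with the $A^*$-action, so (H2) in $\sgpres{D}{\rel{Q}}$ cannot supply them); an (H1)-swap, with its context absorbed into the two edges, is again an (H1)-instance and becomes an instance of the second family; (H4)-insertions become (H4)-insertions; and (H3) in $\sgpres{D}{\rel{Q}}$ performs each replacement inside the ambient path. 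Writing out those verifications, rather than the rerouting argument, is what completes the proof.
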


Just as \fullref{Theorem}{thm:rewritingpres} always gives an infinite
presentation for the subsemigroup,
\fullref{Theorem}{thm:rewritinghomobasis} always gives an infinite
homotopy basis for the derivation graph of a presentation of the
subsemigroup. However, when the subsemigroup is a finite Rees index
ideal, it is possible to find a finite set of parallel paths that
generate all the parallel paths \eqref{eq:rewritinghomobasis} and so
forms a finite homotopy basis for the ideal and so proving the result
\cite[\S~8]{malheiro_fdt}.

\subsection{\texorpdfstring{$\FP_n$}{FPn}}
\label{sec:fpn}

Let $S$ be a monoid and let $\zset S$ be the integral monoid ring of
$S$. We can regard $\zset$ as a trivial left $\zset S$-module with the
$\zset S$-action via the standard augmentation $\epsilon_S : \zset S
\to \zset$ (where $s \mapsto 1$ for all $s \in S$): that is, $w \cdot
z = \epsilon_S(w)z$ for $w \in \zset S$ and $z \in \zset$. A free
resolution of the trivial left $\zset S$-module $\zset$ is an exact
sequence
\[
\ldots \stackrel{\delta_3}{\to} A_2 \stackrel{\delta_2}{\to} A_1 \stackrel{\delta_1}{\to} A_0 \stackrel{\delta_0}{\to} \zset \to 0,
\]
in the sense that $\delta_0 : A_0 \to \zset$ and $\delta_i : A_i \to
A_{i-1}$, for $i \geq 1$, are homomorphisms such that $\im\delta_{i} =
\ker\delta_{i-1}$ for $i \geq 1$, and $\im\delta_0 = \zset$, in which
$A_0, A_1, A_2,\ldots$ are free left $\zset S$-modules and
homomorphisms. A monoid $S$ has property \defterm{left-$\FP_n$} if
there is a partial free resolution of the trivial left $\zset
S$-module $\zset$
\[
A_n \to A_{n-1} \to \ldots \to A_1 \to A_0 \to \zset \to 0
\]
where $A_0, A_1, \ldots, A_n$ are all finitely generated; if there is
a (non-partial) free resolution where all the $A_i$ are finitely
generates, the monoid is has property $\FP_\infty$. Note that if $n \geq m$,
then left-$\FP_n$ implies right $\FP_m$. Right-$\FP_n$
\defterm{right-$\FP_n$} is defined dually. For groups, left-$\FP_n$ is
equivalent to right-$\FP_n$, but there are completely independent for
semigroups, in the sense that exists a monoid that is
right-$\FP_\infty$ but not left-$\FP_1$.

\begin{theorem}[{\cite[\S~8]{gray_homological}}]
The properties left- and right-$\FP_n$ (for any $n \in \nset \cup
\{\infty\}$) are not inherited in general by large subsemigroups.
\end{theorem}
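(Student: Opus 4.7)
The plan is to exhibit a finitely generated monoid $S$ with property $\FP_\infty$ (and therefore $\FP_n$ for every $n$) that contains a large subsemigroup $T$ failing $\FP_n$ for some prescribed $n$. Since $\FP_n$ passes to finite-index subgroups by Serre's theorem, any such counterexample must exploit genuinely non-group-theoretic phenomena — the presence of zeros, non-invertible idempotents, or one-sided identities whose action in the monoid ring collapses the homology.

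A natural strategy is to adjoin a zero. If $S = T \cup \{0\}$ with $0$ an absorbing element, then $T$ has Rees index $2$ in $S$. The identity $s \cdot 0 = 0$ forces the principal left ideal $\zset S \cdot 0$ to be isomorphic, as a left $\zset S$-module, to the trivial module $\zset$. Thus the trivial module embeds into the free regular module $\zset S$ as a submodule, fitting into the short exact sequence of left $\zset S$-modules
\[
0 \to \zset \to \zset S \to \zset S / (\zset S \cdot 0) \to 0.
\]
Because $\zset S$ is free (hence $\FP_\infty$), finite generation of resolutions of $\zset$ over $\zset S$ is controlled, via the standard long exact sequence, by finite generation of resolutions of the quotient $\zset S/(\zset S \cdot 0)$. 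The counterexample is engineered so that the quotient enjoys $\FP_\infty$ even though the monoid $T$ on its own — considered as generating the trivial $\zset T$-module $\zset$ — does not.

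The plan therefore has three steps: (1) pick a finitely generated monoid $T$ known to fail $\FP_n$ (such monoids arise, for example, from finitely presented rewriting systems whose minimal resolutions have non-finitely-generated syzygy modules at level $n$); (2) form a small extension $S$ by adjoining a zero or a comparable collapsing element; (3) produce an explicit finite free resolution of $\zset$ over $\zset S$ by exploiting the relations the new element(s) impose. The main obstacle lies in step (3): one must check that the pathology of $T$ is genuinely cancelled by the extra relations in $\zset S$, rather than being inherited through the embedding $\zset T \hookrightarrow \zset S$. The calibration between the homological bad behaviour of $T$ and the collapsing power of the adjoined element is the technical heart of the construction. The right-$\FP_n$ version is handled by a mirror construction using a right-absorbing element, and a direct product of the two examples disables both conditions simultaneously, completing the counterexample.
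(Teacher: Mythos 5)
Your construction is the same as the paper's --- adjoin a zero to a monoid failing $\FP_n$, so that the subsemigroup has Rees index $2$ in an $\FP_\infty$ small extension --- but the proposal stops short of a proof at exactly the point you yourself flag as ``the technical heart'': you never establish that $S=T^0$ is $\FP_\infty$, and you mischaracterize what is needed to do so. There is no ``calibration'' between the homological pathology of $T$ and the collapsing power of the adjoined element, and the quotient $\zset S/(\zset S\cdot 0)$ plays no role. The point you are missing is that your embedding $\zset\cong\zset S\cdot 0\hookrightarrow\zset S$ \emph{splits}: the map $w\mapsto w\cdot 0$ is a left $\zset S$-module homomorphism from $\zset S$ onto $\zset S\cdot 0$ restricting to the identity there, so the trivial module is a direct summand of the regular module, hence finitely generated projective, hence left-$\FP_\infty$ (indeed of left cohomological dimension $0$) --- uniformly, for \emph{every} monoid with a zero, with no dependence whatsoever on $T$. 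This is precisely Kobayashi's result that the paper cites; without it, or the splitting observation, your step (3) remains an announced obstacle rather than a completed argument, and it is the whole content of the theorem.

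Two further points. Your step (1) is vaguer than necessary: rather than hunting for rewriting systems with non-finitely-generated syzygy modules at level $n$, take $T$ to be any group $G$ that is not $\FP_1$ (for instance, not finitely generated); since left-$\FP_n$ implies left-$\FP_1$ for $n\geq 1$, this one choice defeats every $n\in\nset\cup\{\infty\}$ simultaneously, and for groups the left and right conditions coincide. Finally, your closing manoeuvre for the right-handed case is both unnecessary and broken: a two-sided zero yields left- \emph{and} right-$\FP_\infty$ at once, so no mirror construction is needed; and if ``a direct product of the two examples'' means passing to $T_1\times T_2$ inside $S_1\times S_2$, then the complement $(S_1\times S_2)-(T_1\times T_2)$ is infinite and the Rees index hypothesis is destroyed.
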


A monoid with a zero is left- and right-$\FP_\infty$
\cite[Proposition~3.1]{kobayashi_homological}. Thus if $G$ is a group
that is not $\FP_1$ (and thus not left- or right-$\FP_n$ for any $n
\in \nset \cup \{\infty\}$), then it is a large subsemigroup of $G^0$,
which is left- and right-$\FP_\infty$ (and thus left- and
right-$\FP_n$ for any $n \in \nset \cup \{\infty\}$).

\begin{question}
Are left- and right-$\FP_n$ (for $n \in \nset \cup \{\infty\}$
inherited by small extensions?
\end{question}

\subsection{Finite cohomological dimension}
\label{sec:fcd}

A projective resolution of the trivial left $\zset S$-module $\zset$ is an exact sequence
\[
\ldots \stackrel{\delta_3}{\to} P_2 \stackrel{\delta_2}{\to} P_1 \stackrel{\delta_1}{\to} P_0 \stackrel{\delta_0}{\to} \zset \to 0,
\]
in which the $P_i$ are projective left $\zset S$-modules. Such
resolutions always exist. If there exists $n \geq 0$ such that $P_n
\neq 0$ but $P_i = 0$ for all $i > n$, then the resolution has length
$n$, and otherwise has infinite length. The minimum length among all
projective resolutions of $\zset$ is the left cohomological dimension
of $S$. The right cohomological dimension is defined dually. For
groups, the left and right cohomological dimensions coincide.

\begin{theorem}[{\cite[\S~8]{gray_homological}}]
Finite left and right cohomological dimension are not inherited in
general by large subsemigroups.
\end{theorem}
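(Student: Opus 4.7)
The plan is to adapt the counterexample from the previous subsection on $\FP_n$: I exhibit a monoid with a zero that has finite left and right cohomological dimension, and inside it a large subsemigroup (namely the monoid's non-zero part) having infinite left and right cohomological dimension. The first step is to show that any semigroup $S$ with a zero element $z$ has both left and right cohomological dimension equal to $0$. The key observation is that $\zset z \subseteq \zset S$ is a left $\zset S$-submodule isomorphic to the trivial module $\zset$: for any $y \in \zset S$ one has $y z = \epsilon_S(y)\, z$, so $y$ acts on $\zset z$ via the augmentation. Moreover, the right-multiplication map $\pi : \zset S \to \zset z$ defined by $y \mapsto y z$ is a left $\zset S$-module homomorphism satisfying $\pi(nz) = nz$, and it therefore splits the inclusion $\zset z \hookrightarrow \zset S$. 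Hence $\zset$ is a direct summand of the free module $\zset S$, and in particular is projective as a left $\zset S$-module. The trivial complex $0 \to \zset \to \zset \to 0$ is therefore a projective resolution of $\zset$ of length $0$, which witnesses that the left cohomological dimension of $S$ is $0$; the right-handed case is entirely symmetric.

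Next I take $G$ to be any non-trivial finite group and set $S = G^0$. By the preceding step, $G^0$ has left and right cohomological dimensions both equal to $0$, and so in particular both are finite. On the other hand, the cohomological dimension of $G$ viewed as a semigroup coincides with its classical cohomological dimension as a group, since the monoid ring $\zset G$, the trivial $\zset G$-module $\zset$, and the projective $\zset G$-resolutions involved are the same in the two settings. For a non-trivial finite group this classical cohomological dimension is well-known to be infinite: the abelian groups $\mathrm{Ext}_{\zset G}^n(\zset,\zset) \isom H^n(G,\zset)$ are non-zero for infinitely many $n$, ruling out any projective resolution of $\zset$ of finite length.

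Finally, since $|G^0 - G| = 1$, the group $G$ is a large subsemigroup of $G^0$, and the pair $(G^0, G)$ exhibits the required counterexample: $G^0$ has finite left and right cohomological dimension while its large subsemigroup $G$ has infinite left and right cohomological dimension. The only genuinely technical point in the argument is the verification that $\zset$ is a direct summand of $\zset S$ whenever $S$ has a zero; once that is settled, the rest follows at once from standard facts about the cohomology of finite groups.
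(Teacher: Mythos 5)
Your proof is correct and takes essentially the same route as the paper: both exhibit a group $G$ of infinite cohomological dimension as a large (Rees index $2$) subsemigroup of $G^0$, whose left and right cohomological dimensions are $0$ because it contains a zero. The only differences are that you prove the zero-monoid fact directly (via the retraction $y \mapsto yz$ exhibiting $\zset$ as a direct summand of the free module $\zset S$) where the paper simply cites Guba--Pride, and that you make an explicit, correctly justified choice of $G$ as a non-trivial finite group.
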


A monoid with a zero has left and right cohomological dimension~$0$
\cite[Theorem~1]{guba_cohomological}. Thus if $G$ is a group with
infinite cohomological dimension, then it is a large subsemigroup of
$G^0$, which is a monoid with finite left and right cohomological
dimension.

\begin{question}
Are finite left and right cohomological dimension inherited by
small extensions?
\end{question}

\section{Residual finiteness}
\label{sec:residualfin}

\begin{theorem}[{\cite[Corollary~4.6]{ruskuc_syntactic}}]
Residual finiteness is inherited by small extensions and by large
subsemigroups.
\end{theorem}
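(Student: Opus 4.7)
For the large-subsemigroup direction I would argue trivially: any separating homomorphism $S\to F$ to a finite semigroup restricts to a separating homomorphism $T\to F$, so $T$ inherits residual finiteness.

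For the small-extension direction, I would first adjoin an identity if necessary (noting this preserves both residual finiteness and the Rees index) to reduce to the case where $S$ is a monoid. Given distinct $s_1,s_2\in S$, the plan is to produce a finite-index two-sided congruence on $S$ separating them. I would begin by building a finite-index equivalence relation $\pi$ on $S$ separating $s_1$ and $s_2$: if both lie in $T$, residual finiteness of $T$ supplies a finite-index congruence $\sigma$ on $T$ separating them, and I would let the classes of $\pi$ be the $\sigma$-classes of $T$ together with singletons $\{a\}$ for $a\in S-T$; if not both lie in $T$, I would take $\sigma$ to be the universal congruence on $T$, so that $\pi$ has classes $T$ and $\{a\}$ for $a\in S-T$. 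Either way $\pi$ has finite index and separates $s_1,s_2$.

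Next I would form $\pi^R=\{(x,y)\in S\times S:(xv,yv)\in\pi\text{ for every }v\in S\}$, the largest right congruence of $S$ contained in $\pi$. The key claim is that $\pi^R$ has finite index, and I expect this to be the main obstacle. The argument: for $x,y\in T$ with $x\,\sigma\,y$ and any $v\in T$, the condition $(xv,yv)\in\pi$ is automatic because $xv,yv\in T$ and $xv\,\sigma\,yv$ by the congruence property of $\sigma$. Consequently, the $\pi^R$-class of an element $x\in T$ is determined by the pair consisting of $[x]_\sigma$ and the function $v\mapsto[xv]_\pi$ from the finite set $S-T$ to the finite set of $\pi$-classes. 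Only finitely many such pairs exist. Elements of $S-T$ each lie alone in a $\pi^R$-class, since their $\pi$-class is already a singleton, so they contribute only $|S-T|$ further classes. This step is where finiteness of the Rees index $|S-T|$ is essential.

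Finally, I would promote $\pi^R$ to a two-sided congruence. Right multiplication by elements of $S$ is well-defined on $S/\pi^R$ and gives a homomorphism $\phi:S\to M$ to a finite monoid $M$ (the transformation monoid of $S/\pi^R$, or its opposite to accommodate composition conventions). The kernel $\theta$ of $\phi$ is a two-sided congruence of finite index. Since $\phi(s_i)$ maps $[1]_{\pi^R}$ to $[s_i]_{\pi^R}$, and $\pi^R\subseteq\pi$ separates $s_1$ from $s_2$, the transformations $\phi(s_1)$ and $\phi(s_2)$ differ, so $(s_1,s_2)\notin\theta$ and we obtain the required separating homomorphism to a finite semigroup.
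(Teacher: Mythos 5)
Your proof is correct, and its skeleton matches the paper's: the large-subsemigroup direction is dismissed as trivial (subsemigroups inherit residual finiteness), and for small extensions you build exactly the equivalence relation the paper uses, namely a separating finite-index congruence $\sigma$ on $T$ extended by singleton classes on $S-T$ (the paper's $\eta\cup\Delta_{S-T}$), with the same two-case split according to whether both elements lie in $T$. Where you genuinely diverge is in how the finite-index claim is handled. The paper passes directly to the largest \emph{two-sided} congruence contained in this relation and cites Ru\v{s}kuc \& Thomas (Theorems~2.4 and~4.3 of the syntactic-index paper) for the fact that it has finitely many classes. You instead take the largest \emph{right} congruence $\pi^R$, prove its finite index by an explicit counting argument --- the $\pi^R$-class of $x\in T$ is determined by $[x]_\sigma$ together with the map $v\mapsto[xv]_\pi$ on the finite set $S-T$, which is exactly where finiteness of the Rees index enters --- and then promote to a two-sided congruence of finite index via the right-regular action on the finite set $S/\pi^R$. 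The preliminary adjunction of an identity is harmless and is needed both for $\pi^R\subseteq\pi$ and for the evaluation-at-$[1]$ step that shows $\phi(s_1)\neq\phi(s_2)$. Your route is more self-contained (it supplies a proof of the finiteness statement the survey merely cites) at the cost of the detour through the transformation monoid; the two congruences produced need not coincide, but each has finite index and separates the given pair.
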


The result for large subsemigroups is trivial: residual finiteness is
inherited by arbitrary subsemigroups. So let $T$ be a residually
finite semigroup and let $S$ be a small extension of $S$. Let $s,t \in
S$. If $s,t \in T$, then since $T$ is residually finite, there is a
congruence $\eta$ on $T$, with finitely many classes, that separates
$s$ and $t$. Let $\zeta$ be the largest congruence contained in $\eta
\cup \Delta_{S-T}$, where $\Delta_{S-T} = \{(x,x) : x \in S-T\}$. Then
$\zeta$ has finitely many classes \cite[Theorems~2.4 
  and~4.3]{ruskuc_syntactic} and separates $s$ and $t$. If at least
one of $s$ and $t$ lies outside $T$, let $\zeta$ be the largest
congruence contained in $(T \times T) \cup \Delta_{S-T}$. Then again
$\zeta$ has finitely many classes and separates $s$ and $t$.

\section{Periodicity and related properties}
\label{sec:periodicity}

\subsection{Local finiteness, local finite presentation, and periodicity}
\label{sec:localfin}

\begin{theorem}[{\cite[Theorem~5.1(ii--iv)]{ruskuc_largesubsemigroups}}]
Local finiteness, being locally finitely presented, and periodicity
are all inherited by small extensions and by large subsemigroups.
\end{theorem}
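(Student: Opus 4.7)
The inheritance by large subsemigroups is immediate in all three cases, as each of the properties passes to arbitrary subsemigroups. Indeed, a finitely generated subsemigroup of a subsemigroup $T \leq S$ is, in particular, a finitely generated subsemigroup of $S$ (covering local finiteness and local finite presentation), and the cyclic subsemigroup generated by $t \in T$ inside $T$ coincides with the one generated inside $S$ (covering periodicity).

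For the small-extension direction of local finiteness and local finite presentation I plan a uniform strategy. Let $T$ have the given property and let $S$ be a small extension of $T$; let $U$ be an arbitrary finitely generated subsemigroup of $S$. Since $U - (U \cap T) \subseteq S - T$ is finite, the intersection $U \cap T$ is a large subsemigroup of $U$. By the inheritance of finite generation by large subsemigroups from \fullref{\S}{sec:fingen}, $U \cap T$ is therefore finitely generated. The hypothesis on $T$ then yields that $U \cap T$ is finite (in the local-finiteness case) or finitely presented (in the local-finite-presentation case). In the first case $U = (U \cap T) \cup (U - T)$ is a finite union of finite sets and so finite. In the second case, $U$ is a small extension of the finitely presented semigroup $U \cap T$, and so the small-extensions half of \fullref{Theorem}{thm:finpres} delivers that $U$ is finitely presented.

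Periodicity under small extensions needs a separate direct argument, since a periodic semigroup need not be locally finite. Given $s \in S$, consider $s, s^2, s^3, \ldots$. If every power lies in the finite set $S - T$, then pigeonhole forces $s^i = s^j$ for some $i < j$ and $\langle s \rangle$ is finite. Otherwise some $s^n$ lies in $T$; periodicity of $T$ makes the cyclic subsemigroup $\langle s^n \rangle \leq T$ finite, giving $s^{nk} = s^{n(k+m)}$ for suitable $k,m \geq 1$, which in turn forces $\langle s \rangle$ itself to be finite. I do not foresee a real obstacle: for the two local properties the substantive work has already been discharged by the earlier inheritance results for finite generation and finite presentation, so the argument reduces to the observation that $U \cap T$ is a large subsemigroup of $U$; and for periodicity the only input needed is the pigeonhole fact that every element of $S$ has some power inside $T$.
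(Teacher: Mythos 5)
Your proposal is correct and follows essentially the same route as the paper: triviality for large subsemigroups, the reduction of the two local properties to the previously established inheritance of finite generation and finite presentation via the observation that $U \cap T$ is a large subsemigroup of $U$, and the pigeonhole argument for periodicity. If anything, you are more explicit than the paper, which leaves implicit the step that $U \cap T$ must be finitely generated before the hypothesis on $T$ can be applied.
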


For large subsemigroups, this result is trivial: local finiteness,
being locally finitely presented, and periodicity are all inherited by
arbitrary subsemigroups.

The proofs for small extensions are quite short. Let $T$ be a
semigroup and $S$ a small extension of $T$. The proofs for local
finiteness and being locally finitely presented proceed in the same
way: Suppose $T$ is locally finite (respectively, locally finitely
presented). Let $U$ be a finitely generated subsemigroup of $S$. Then
$U \cap T$ is finite (respectively, finitely presented). Since $U$ is
a small extension of $T$, it follows that $U$ is finite (respectively,
finitely presented). Since $U$ was arbitrary, the results follow.

Suppose $T$ is periodic. Let $s \in S$. Consider $s^i$ for all $i$. If
all these lie in $S-T$, some must be equal by the pigeon-hole
principle. If $s^k \in T$, then periodicity in $T$ applies to show $s$
is periodic. Hence $S$ is periodic.

(The more general result for finite Green index extensions is also
known to hold for local finiteness and periodicity
\cite[Theorem~2(I--II)]{gray_green1}.)

\subsection{Global torsion}
\label{sec:globaltorsion}

A semigroup $S$ has \defterm{global torsion} if $S^n = S^{n+1}$ for
some $n \in \nset$. 

\begin{theorem}[{\cite[Theorem~8.1]{gray_ideals}}]
Global torsion is inherited by small extensions but not in general by
large subsemigroups.
\end{theorem}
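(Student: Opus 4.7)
The plan is to prove the two parts of the theorem separately.

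For the small-extensions direction, suppose $T^n = T^{n+1}$, and let $F = S \setminus T$ with $r = |F|$. Iterating the relation gives $T^m = T^n$ for every $m \geq n$, so any block of $\geq n$ consecutive $T$-letters inside a product expression can be rewritten, within $T$, as a block of length one more or one less representing the same element. The aim is to find $M$ with $S^M = S^{M+1}$. The main device is a parsing: write any expression $p = s_1 s_2 \cdots s_m$ uniquely as
\[
p = w_0 f_1 w_1 f_2 w_2 \cdots f_j w_j,
\]
with each $f_i \in F$ and each $w_i \in T^*$ (possibly empty). Take $M = (r+1)n + r$. In the case $j \leq r$, the $M - j \geq (r+1)n$ letters from $T$ are distributed among $j + 1 \leq r + 1$ blocks, so some $w_i$ has $|w_i| \geq n$; applying the stability $T^{|w_i|} = T^{|w_i| \pm 1}$ to $w_i$ changes the total length by $\pm 1$, yielding both $p \in S^{M+1}$ (starting from an expression of length $M$) and $p \in S^{M}$ (starting from an expression of length $M+1$).

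The subtle case is $j > r$, where pigeonhole gives a repetition $f_i = f_k$ among the $F$-letters. I would enlarge $M$ further (to order $r^2 n$) and argue combinatorially that the repeated $F$-letter structure, combined with $|F| < \infty$, forces one of: (a) the gap $w_i f_{i+1} \cdots f_{k-1} w_{k-1}$ contains a $T$-run of length $\geq n$ reducing to the previous case; or (b) structural identities among $F$-products allow the insertion or deletion of a factor, exploiting the eventual periodicity of $\{f^k : k \geq 1\}$ for each $f \in F$ (either $f^k \in T$ for some $k$, or $\langle f \rangle$ is a finite cyclic subsemigroup, whence some $f^a = f^b$ with $a < b$ enables length changes by multiples). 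I expect this case analysis, and the correct choice of $M$ guaranteeing a favorable configuration in every expression, to be the main technical obstacle.

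For the counterexample showing global torsion is not inherited by large subsemigroups, take $S = (\nset_0, +)$ and $T = (\nset, +) = S \setminus \{0\}$. Then $|S \setminus T| = 1$, so $T$ is a large subsemigroup of $S$. Since $S$ is a monoid with identity $0$, we have $S^k = \{0 + 0 + \cdots + 0 + s : s \in S\} = S$ for every $k \geq 1$, so $S$ trivially has global torsion with $n = 1$. On the other hand, in additive notation $T^k = \{k, k+1, k+2, \ldots\}$, and $k \in T^k \setminus T^{k+1}$ for every $k \in \nset$, so the chain $T^1 \supsetneq T^2 \supsetneq T^3 \supsetneq \cdots$ is strictly decreasing and never stabilizes; thus $T$ lacks global torsion.
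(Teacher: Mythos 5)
Your counterexample for the large-subsemigroup half is correct and is essentially the paper's own construction: the paper takes any semigroup $T$ without global torsion and observes that $T^{1}$ (identity adjoined) satisfies $(T^{1})^{2}=T^{1}$. Your instance $T=(\nset,+)\subseteq S=(\nset\cup\{0\},+)$ is a valid concretisation --- indeed a safer one than the paper's parenthetical suggestion of a ``non-trivial null semigroup'' $N$, which in fact \emph{does} have global torsion, since $N^{2}=N^{3}=\{0\}$.

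The small-extensions half has a genuine gap, which you flag yourself: the case of an expression containing more than $r=|S-T|$ factors from $S-T$ is only speculated about, and the speculation does not go through as written. A repetition $f_i=f_k$ among the $F$-letters is useless on its own, because the two occurrences are separated by other factors, so no power of $f$ appears as a subproduct and the eventual periodicity of $\langle f\rangle$ cannot be invoked; and when the $F$-letters are dense nothing forces a $T$-run of length $\geq n$. Two ideas close the gap. First, $S^{m+1}\subseteq S^{m}$ always holds (regroup two adjacent factors), so the chain $S\supseteq S^{2}\supseteq\cdots$ is descending; only $S^{M}\subseteq S^{M+1}$ needs proving, and it suffices to re-express $p=s_1\cdots s_M$ as a product of \emph{any} number of factors exceeding $M$. (This also makes the shortening half of your case $j\leq r$ unnecessary.) Second, apply pigeonhole/Ramsey to the \emph{interval products} rather than to the individual letters: colour each pair $a<b$ in $\{0,1,\dots,M\}$ by the element of $S-T$ equal to $s_{a+1}\cdots s_{b}$, or by a single colour ``$T$'' if that product lies in $T$. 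For $M$ at least the relevant Ramsey number for $r+1$ colours one obtains either (i) $a<b<c$ with $s_{a+1}\cdots s_{b}=s_{b+1}\cdots s_{c}=s_{a+1}\cdots s_{c}=f\in S-T$, whence $f=f^{2}$ and duplicating the block $s_{a+1}\cdots s_{c}$ rewrites $p$ with $M+(c-a)>M$ factors; or (ii) $a_{0}<\cdots<a_{n}$ all of whose interval products lie in $T$, whence $s_{a_{0}+1}\cdots s_{a_{n}}\in T^{n}=T^{(a_{n}-a_{0})+1}$ and that block can be re-expressed with one extra factor. Either way $p\in S^{M+1}$. Note that the survey itself gives no proof of this direction --- it only cites Gray, Maltcev, Mitchell \& Ru\v{s}kuc, where the result is proved in the greater generality of Green index --- so there is nothing in the paper to compare your argument with directly; but as it stands your argument does not establish the claim.
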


Global torsion is actually inherited by finite Green index extensions
\cite[Theorem~8.1]{gray_ideals}.

To construct a counterexample for large subsemigroups, let $T$ be a
semigroup without global torsion (such as a non-trivial null
semigroup). Then it is a large subsemigroup of $T^1$, which has global
torsion since $(T^1)^2 \supseteq 1T^1 = T^1$.

\subsection{Eventual regularity}
\label{sec:eventualreg}

An element $s$ of a semigroup if \defterm{eventually regular} if $s^n$
is regular for some $n \in \nset$. A semigroup is \defterm{eventually
  regular} if all its elements are eventually regular. Eventual
regularity was introduced by Edwards~\cite{edwards_eventually} and has
received a lot of attention.

\begin{theorem}[{\cite[Theorem~9.2]{gray_ideals}}]
Eventual regularity is inherited by small extensions and by large
subsemigroups.
\end{theorem}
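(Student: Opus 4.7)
The plan is to treat the two directions separately. The small-extension direction reduces to pigeonhole on cyclic subsemigroups, while the large-subsemigroup direction hinges on a disjointness observation that lets us pigeonhole on the finite complement $S\setminus T$.

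For small extensions, let $s\in S$ and consider $\langle s\rangle$. If $\langle s\rangle$ is finite it contains an idempotent, which is trivially regular in $S$ with itself as a pseudo-inverse; otherwise the powers of $s$ are all distinct, so $|S\setminus T|<\infty$ forces $s^k\in T$ for infinitely many $k$, and applying eventual regularity of $T$ to such an $s^k$ yields some $s^{km}$ regular in $T$, an equation that persists in $S$.

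For large subsemigroups, let $t\in T$. The case $\langle t\rangle$ finite is immediate since the idempotent power lies in $T$, so assume $\langle t\rangle$ is infinite. First I use eventual regularity of $S$: for every $j\geq 1$ the element $t^j\in S$ has some regular power $(t^j)^{k_j}=t^{jk_j}$, so the set
\[
R(t)=\{m\geq 1:t^m\text{ is regular in }S\}
\]
contains arbitrarily large integers and is therefore infinite. For each $m\in R(t)$ set $V(t^m)=\{z\in S:t^m z t^m=t^m\}\neq\emptyset$; it suffices to show that $V(t^m)\cap T\neq\emptyset$ for some $m\in R(t)$.

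The critical step is that the sets $V(t^m)$ with $m\in R(t)$ are pairwise disjoint. If $z\in V(t^{m_1})\cap V(t^{m_2})$ with $m_1<m_2$, then left-multiplying $t^{m_1}zt^{m_1}=t^{m_1}$ by $t^{m_2-m_1}$ yields $t^{m_2}zt^{m_1}=t^{m_2}$, and a further right-multiplication by $t^{m_2-m_1}$ yields $t^{m_2}zt^{m_2}=t^{2m_2-m_1}$; combined with $t^{m_2}zt^{m_2}=t^{m_2}$, this forces $t^{m_2}=t^{2m_2-m_1}$, contradicting infinitude of $\langle t\rangle$. With disjointness established, if every $V(t^m)$ missed $T$ then the disjoint union $\bigsqcup_{m\in R(t)}V(t^m)$ would be an infinite subset of the finite set $S\setminus T$, impossible. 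Hence some $V(t^m)$ meets $T$, producing a pseudo-inverse of $t^m$ in $T$ and witnessing eventual regularity of $t$ there.

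The main obstacle is the disjointness claim for the $V(t^m)$: this is the identity that converts finiteness of $S\setminus T$ into the existence of a pseudo-inverse inside $T$, and it relies crucially on $\langle t\rangle$ being infinite. Without such an observation the pigeonhole on $S\setminus T$ yields nothing, and one would instead need a much more delicate explicit construction combining $x$ with elements of $T$ obtained from lower-level pigeonhole arguments on sequences such as $t^ax$, $xt^b$, or $t^jxt^k$.
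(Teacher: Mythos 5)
Your argument is correct, and both directions check out: the small-extension half is the standard pigeonhole on the distinct powers of $s$ (with the finite monogenic case disposed of by the idempotent), and the large-subsemigroup half is carried by your disjointness lemma for the sets $V(t^m)$, whose verification ($t^{m_2}=t^{2m_2-m_1}$ contradicting infinitude of $\langle t\rangle$) is sound. The survey itself offers no proof of this theorem --- it simply cites Theorem~9.2 of Gray--Maltcev--Mitchell--Ru\v{s}kuc, where the result is established in the stronger setting of finite \emph{Green} index --- so your write-up is a genuinely different, and more elementary, route: it is self-contained and exploits the finiteness of $S-T$ directly, via the observation that an element of $S$ can serve as a pseudo-inverse for at most one regular power of $t$ once $\langle t\rangle$ is infinite. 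What you lose relative to the cited result is generality: your pigeonhole on $S-T$ has no analogue when the complement is merely a finite union of relative $\mathcal{H}$-classes (and hence possibly infinite), so the Green-index version requires a different argument. Within the scope of this survey, which is about Rees index, your proof is a clean and complete substitute for the citation.
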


Both parts of this result actually hold for finite Green
index~\cite[Theorem~9.2]{gray_ideals}.

\section{Hopficity and co-hopficity}
\label{sec:hopf}

Recall that an algebraic or relational structure is \emph{hopfian} if
every surjective endomorphism of that structure is bijective and thus
an automorphism. Dually, a structure is \emph{co-hopfian} if every
injective endomorphism of that structure is bijective and thus an
automorphism.

\begin{theorem}[{\cite[\S~2]{maltcev_hopfian}}]
Hopficity is not inherited in general by large subsemigroups
or by small extensions
\end{theorem}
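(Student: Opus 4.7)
The statement asserts two separate failures of inheritance, so I would prove it by constructing explicit counterexamples in each direction. In both cases the guiding principle is the same: build semigroups whose set of endomorphisms can be enumerated transparently enough to check hopficity, and arrange the multiplication on a small finite set of ``extra'' elements to create, or destroy, precisely one non-identity surjective endomorphism.

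For the small-extensions direction I would take $T$ to be an infinite hopfian semigroup with a great deal of rigidity — for example, an infinite monogenic-like semigroup, or a semigroup generated by elements of mutually distinct ``types'' so that the only surjective endomorphism is the identity. To this $T$ I would adjoin a finite set $E$ of new elements, specifying all products $et$, $te$, $ee'$ (for $t \in T$ and $e,e' \in E$) so as to be associative and so that $S = T \cup E$ admits a visible proper surjective endomorphism $\phi$: the typical trick is to choose the multiplication so that each $e \in E$ can be collapsed onto some designated element $t_e \in T$ while some element of $T$ is pushed into $E$, producing a map that is onto $S$ but identifies two elements. The delicate step is verifying that the rigidity of $T$ was not destroyed by the adjunction, that is, that no surjective endomorphism of $T$ itself can be extended from the one constructed on $S$; this is what forces the hopficity of $T$ while $S$ is non-hopfian.

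For the large-subsemigroups direction the natural starting point is the free commutative semigroup $T$ on countably many generators $x_1, x_2, \ldots$, which is non-hopfian via the shift $x_1 \mapsto x_1$, $x_{i+1} \mapsto x_i$ (surjective but identifying $x_1$ and $x_2$). The goal is to embed $T$ as a large subsemigroup of a hopfian semigroup $S$ by adjoining finitely many extra elements whose interactions with $T$ anchor the generators of $T$ individually. Concretely, I would adjoin a single distinguished element $e \in S - T$ and fix the products $ex_i$, $x_ie$, $e^2$ so that the collection $\{ex_i : i \in \nset\}$ lies inside $T$ and the assignment $i \mapsto ex_i$ is injective, forcing any endomorphism $\phi$ of $S$ with $\phi(e) = e$ to satisfy $\phi(x_i) = x_{\sigma(i)}$ for some permutation $\sigma$ of $\nset$; the adjoined elements must then be chosen so that $\phi(e) = e$ is the only possibility for a surjective $\phi$, whence $\phi$ is injective on $T$ and hence on $S$.

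The main obstacle in both constructions is the combination of two constraints that push in opposite directions: one needs enough extra structure in $S - T$ to either create or kill a surjective non-injective endomorphism, yet little enough that associativity holds and that the list of endomorphisms can actually be enumerated. Threading this needle is where the work lies; the (co)hopficity verifications themselves reduce, once the multiplication table is in place, to a finite case analysis on the action on the elements of $S - T$ together with the structure of endomorphisms of $T$.
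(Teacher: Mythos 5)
Your overall strategy---exhibit explicit counterexamples in each direction by adjoining finitely many elements with a hand-crafted multiplication---is the right one and is in the same spirit as the paper's proof. But as written the proposal has genuine gaps in both directions: no counterexample is actually produced and verified, and the one concrete mechanism you do describe does not work.

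For the large-subsemigroups direction, the anchoring device is flawed in two ways. First, associativity severely constrains the products $ex_i$: since $x_ix_j = x_jx_i$ in the free commutative semigroup $T$, you need $(ex_i)x_j = e(x_ix_j) = (ex_j)x_i$, and if all $ex_i$ lie in $T$ this forces $ex_i = vx_i$ for a single fixed $v \in T^1$, so the products $ex_i$ cannot be chosen freely; left multiplication by $e$ on $T$ is just left multiplication by $v$. Second, and more importantly, the inference ``$i \mapsto ex_i$ injective, hence any surjective $\phi$ with $\phi(e)=e$ permutes the generators'' is circular: from $\phi(x_i)=\phi(x_j)$ you only deduce $\phi(ex_i)=\phi(ex_j)$, which is no contradiction unless you already know $\phi$ is injective on the elements $ex_i$---and applying $\phi$ to $ex_i = vx_i$ yields only $\phi(v)=v$ by cancellation in $T$, telling you nothing about $\phi(x_i)$. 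The small-extensions direction is left entirely as a programme: no $T$, no $E$, no multiplication table, and no verification that $T$ stays hopfian while $S$ acquires a proper surjective endomorphism. For comparison, the paper settles both directions with a single chain $T \subseteq T^1 \subseteq S^1$ of small extensions: $T$ is the disjoint union of the semigroups $T_i = \pres{b_i}{b_i^2=b_i^4}$, $i \in \nset$, with $xy=yx=y$ for $x \in T_i$, $y \in T_j$, $i<j$, and $S = T \cup F$ with $F = \pres{a}{a^5=a^2}$ absorbed in the same way; then $T$ and $S^1$ are hopfian while $T^1$ is not, so $T^1$ is simultaneously a non-hopfian small extension of a hopfian semigroup and a non-hopfian large subsemigroup of one. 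To repair your argument you would need to supply explicit semigroups of this kind and carry out the endomorphism analysis for each.
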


The proof of this result is straightforward: Define a family of
isomorphic semigroups $T_i=\pres{b_i}{b_i^2=b_i^4}$, $i\in\nset$. Form
their disjoint union $T$, and extend the multiplication defined on
each $T_i$ to a multiplication on the whole of $T$ by letting
$xy=yx=y$ for any $x\in T_i$ and $y\in T_j$, where $i<j$. It is easy
to see that this multiplication is associative.

Let $F$ be the semigroup $\pres{a}{a^5=a^2}$. Let $S=T\cup F$, and
extend the multiplication on $T$ and $F$ to a multiplication on the
whole of $S$ by $xy=yx=y$ for $x\in F$ and $y\in T$. Again, this turns
$S$ into a semigroup. Notice that $T\subseteq T^1\subseteq S^1$ is a
chain of small extensions. It is straightforward to show that $S^1$
and $T$ are hopfian but $T^1$ is not
\cite[Proposition~2.1]{maltcev_hopfian}.

However, adding an assumption of finite generation yields a positive
result for small extensions, although not for large subsemigroups:

\begin{theorem}[{\cite[Main~Theorem]{maltcev_hopfian}}]
\label{thm:hopfian}
Within the class of finitely generated semigroup, hopficity is
inherited by small extensions but not in general by large subsemigroups.
\end{theorem}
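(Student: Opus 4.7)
My plan is to split the theorem into its two parts: the positive statement for small extensions and the counterexample for large subsemigroups, addressing each separately.

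For the positive part, let $T$ be a finitely generated hopfian semigroup and $S$ a small extension. Finite generation of $S$ is immediate by appending $S-T$ to a finite generating set of $T$. Given a surjective endomorphism $\phi\colon S\to S$, the task is to show $\phi$ is injective. My strategy is to exploit finiteness of $S-T$ to extract a $\phi$-stable large subsemigroup of $T$ on which hopficity of $T$ can be applied. Concretely, since $\phi(S)=S$, we have $S=\phi(T)\cup \phi(S-T)$, and as $|\phi(S-T)|\le |S-T|$, the image $\phi(T)$ is a large subsemigroup of $S$ with $|S\setminus\phi(T)|\le |S-T|$. Iterating, each $\phi^n(T)$ is large in $S$ with the same uniform bound. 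I would then use a pigeonhole/stabilization argument on the (finitely many) possible "traces" $\phi^n(T)\cap(S-T)$, together with a dual consideration of preimages, to locate some index $n$ and a subsemigroup $W\subseteq T$ with $W$ of finite Rees index in $T$ such that $\phi^n$ maps $W$ into $W$ and restricts to a surjective endomorphism of $W$. By the inheritance of finite generation (Section~3.1), $W$ is finitely generated.

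The remainder of the positive argument reduces injectivity of $\phi$ on $S$ to that of $\phi^n|_W$. Hopficity of $T$ (not of $W$, which we cannot assume) is used to promote the surjective restriction $\phi^n|_W$ into an injective self-map after re-embedding into $T$, and the finite complement $S-W$ is handled by a separate pigeonhole argument on the finitely many trajectories $\{\phi^k(u):k\ge 0\}$ for $u\in S-T$. The main obstacle is that $\phi$ need not preserve $T$: generators of $T$ can be sent into $S-T$, and elements of $S-T$ back into $T$. Taming this mixing so that the resulting stable subsemigroup is genuinely of finite Rees index is the technical heart of the argument; without the finite bound $|S-T|<\infty$ this control would be impossible, which is precisely why small extensions suffice.

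For the negative part, the plan is an explicit construction. Modify the non-finitely-generated example from \fullref{\S}{sec:hopf} (the disjoint union of the semigroups $T_i=\pres{b_i}{b_i^2=b_i^4}$ with the prescribed multiplication) to produce a \emph{finitely generated} non-hopfian semigroup $T$ admitting a surjective but non-injective ``shift-type'' endomorphism, using finitely many generators together with relations that encode the infinite family $T_i$ through a single letter and a shift-generator. The non-injective surjection of the original construction then descends to $T$. Finally, adjoin finitely many elements to form a small extension $S$ of $T$ chosen so that any surjective endomorphism of $S$ is pinned down on these extra elements (for instance by adjoining elements that are forced to be the unique elements with specific algebraic properties such as being the unique idempotent above a given chain), thereby rigidifying the shift and making $S$ hopfian.

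The main obstacle for the counterexample is the simultaneous balancing act: $T$ must be finitely generated \emph{and} non-hopfian (which is already delicate, since many natural finitely generated semigroups with a shift automorphism turn out to be hopfian), while adjoining only finitely many elements must genuinely destroy all surjective non-injective endomorphisms, not merely the particular shift witnessing non-hopficity of $T$. Verifying hopficity of $S$ amounts to checking that every surjective endomorphism is forced, element by element in $S-T$, to act as a permutation, and then lifting this rigidity back into $T$.
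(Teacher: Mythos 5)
Your proposal for the positive half diverges from the paper's route and contains a genuine gap at its central step. First, the pigeonhole on the traces $\phi^n(T)\cap(S-T)$ does not produce a $\phi$-stable subsemigroup: since $\phi$ need not map $T$ into $T$, there is no containment between $\phi^n(T)$ and $\phi^{n+1}(T)$, so equal traces for two exponents tell you nothing about the parts of these images lying inside $T$, and the existence of your $W$ with $\phi^n(W)=W$ is never established. Second, and more seriously, even granting such a $W$ of finite Rees index in $T$, hopficity of $T$ says nothing about a surjective self-map of a proper large subsemigroup $W$ of $T$ --- the second half of this very theorem asserts that hopficity does \emph{not} pass to large subsemigroups, so the ``promotion'' of $\phi^n|_W$ to an injective map via hopficity of $T$ is unfounded (indeed close to circular). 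The paper instead derives the positive result from a different key lemma (\cite[Theorem~3.1]{maltcev_hopfian}): if $T$ is finitely generated and $S$ is a \emph{proper} small extension, then \emph{every} endomorphism $\phi$ of $S$ satisfies $T\phi\neq S$. It is this structural restriction on images, rather than an iteration-and-stabilization argument on $\phi^n(T)$, that allows hopficity of $T$ to be applied to an actual surjective endomorphism of $T$.

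For the negative half you give only a programme, not a construction: a finitely generated non-hopfian $T$ obtained by ``encoding'' the infinite family $T_i$ with a shift generator, then ``rigidified'' by finitely many adjoined elements. The entire difficulty of the counterexample lies in carrying this out and verifying both properties, and the paper does so concretely in \fullref{Example}{eg:hopffreesfpdown}: $T=\sgpres{a,b}{abab^2ab=b}$ is non-hopfian via the explicit surjection $a\mapsto a$, $b\mapsto bab$ (checked against a finite confluent noetherian rewriting system), and $S=T\cup\{f\}$ is hopfian because the added relations make $S$ left-cancellative by Adjan's theorem, which pins down every surjective endomorphism of $S$ as the identity on generators. (The original counterexample of \cite[\S~5]{maltcev_hopfian} went through hopficity of actions of free semigroups, not through an encoding of the $T_i$.) Without an explicit $T$ and $S$ and the attendant verifications, the negative half remains unproved.
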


In fact, the positive result for small extensions is a consequence of
the following theorem, which is independently interesting:

\begin{theorem}[{\cite[Theorem~3.1]{maltcev_hopfian}}]
Let $T$ be a finitely generated semigroup and let $S$ be a proper
small extension of $T$ (that is, a small extension such that $S \neq
T$). Let $\phi$ be an endomorphism of $S$. Then $T\phi\neq S$.
\end{theorem}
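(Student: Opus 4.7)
The plan is to proceed by contradiction: assume $T\phi = S$. Then $S\phi \supseteq T\phi = S$, so $\phi$ is a surjective endomorphism of $S$. The clean subcase is when $\phi$ is additionally injective, hence bijective. By bijectivity, $\phi(T)$ and $\phi(C)$ form a disjoint decomposition of $\phi(S) = S$, where $C := S \setminus T$; but $\phi(T) = T\phi = S$ forces $\phi(C) = \emptyset$, contradicting $C \neq \emptyset$. This observation already handles all automorphisms without using finite generation.

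For the non-injective case one wants to reduce (morally) to this clean picture, and it is here that finite generation enters. Let $A$ be a finite generating set for $T$. Since each $a \in A$ lies in $T \subseteq S = T\phi$, we may choose preimages $u_a \in T$ with $u_a\phi = a$, and set $T_1 := \langle u_a : a \in A\rangle$. Then $T_1$ is finitely generated, $T_1\phi$ contains $A$, and each $u_a\phi = a \in T$, so $T_1\phi \subseteq T$; hence $T_1\phi = T$. Iterating this construction — at stage $n$ we have $T_n\phi = T_{n-1} \supseteq T_n$, so preimages of the generators of $T_n$ can be chosen inside $T_n$ — yields a descending chain $T = T_0 \supseteq T_1 \supseteq T_2 \supseteq \cdots$ of finitely generated (in fact, at-most-$|A|$-generated) subsemigroups of $T$ satisfying $T_{n+1}\phi = T_n$, and in particular $T_n \phi^n = T$ for all $n$.

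The main obstacle, and the real content of the argument, is to extract a contradiction from this tower together with the finiteness of $C$. One natural line is to track, for each $c \in C$, its preimages under the iterated $\phi$: since $T_{n+1}\phi = T_n$, every $c \in C \subseteq S = T_n\phi^n \cdot \phi$ lifts to a preimage in $T_{n+1}$, and the finiteness of $C$ combined with the at-most-$|A|$-generation of each $T_n$ should force a pigeonhole or descent contradiction. An alternative route is to analyse directly the sequence of finite generating sets $A\phi^n \subseteq S$: each $A\phi^n$ generates $S$ (because $\langle A\phi^n\rangle = T\phi^n = S$), the cardinalities $|A\phi^n|$ are non-increasing, and once stabilised $\phi$ acts bijectively between successive stable sets. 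One then hopes to argue that this eventual bijective behaviour on a generating set of $S$ propagates to a bijectivity that reduces the problem to the automorphism subcase handled in the first paragraph, where the contradiction is immediate.
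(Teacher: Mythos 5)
The paper itself does not prove this theorem --- it only cites \cite[Theorem~3.1]{maltcev_hopfian} --- so your argument has to stand on its own, and it does not: it is a setup without a proof. The opening observation (if $\phi$ is injective, then $T\phi$ and $(S-T)\phi$ are disjoint and cover $S\phi=S$, so $T\phi=S$ is impossible) is correct, and the descending tower $T=T_0\supseteq T_1\supseteq\cdots$ of at-most-$|A|$-generated subsemigroups with $T_{n+1}\phi=T_n$ is correctly constructed. But you then explicitly defer ``the real content of the argument'' to two speculative routes, neither of which is carried out, and the second of which cannot work as described. Once $|A\phi^n|$ stabilises, $\phi$ is a bijection between the \emph{finite sets} $A\phi^n$ and $A\phi^{n+1}$; these are in general different sets, and such a bijection gives no control whatever over injectivity of $\phi$ on $S$. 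Indeed, a finitely generated semigroup can admit a surjective non-injective endomorphism --- the semigroup $T=\sgpres{a,b}{abab^2ab=b}$ with $a\mapsto a$, $b\mapsto bab$ in \fullref{Example}{eg:hopffreesfpdown} is exactly such --- so no argument that tries to upgrade surjectivity to bijectivity and then fall back on your first paragraph can succeed in general.

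A second, structural, warning sign is that your machinery never uses the finiteness of $C=S-T$ beyond $C\neq\emptyset$, whereas that hypothesis is essential: the statement is false for proper extensions of infinite Rees index. Take $T=\sgpres{a,b}{abab^2ab=b}$ with the endomorphism $\phi$ above, and let $U=\langle a,ab^2\rangle$. Then $(ab^2)\phi=a(bab)(bab)=abab^2ab=_T b$, so $U\phi=\langle a,b\rangle=T$; on the other hand every word in $\{a,ab^2\}^+$ contains no factor $aba$ and is therefore irreducible for the confluent system of that example, and none of them is the word $b$, so $b\notin U$ and $U$ is a proper finitely generated subsemigroup of $T$ with $U\phi=T$. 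Consequently any correct proof must make $|S-T|<\infty$ do real work --- for instance by pigeonholing, over the finite set $A\times(S-T)$, the generators that the iterates $\phi^n$ are forced to send into $S-T$ (for each $n\geq1$ some $a\in A$ has $a\phi^n\in S-T$, since otherwise $S=T\phi^n=\langle A\phi^n\rangle\subseteq T$). Your tower of $T_n$'s may be usable scaffolding, but as it stands the contradiction is asserted to exist rather than derived, and that derivation is the entire difficulty of the theorem.
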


For large subsemigroups, the first counterexample depended on the
notion of semigroup actions, and connected the hopficity of an action
of a free semigroup to the hopficity of a semigroup constructed from
that action \cite[\S~5]{maltcev_hopfian}. An improved counterexample,
also showing that hopficity is not inherited by large subsemigroups
even within the class of finitely presented semigroup, is the
following:

\begin{example}[{\cite[Example~3.1]{cm_hopf}}]
\label{eg:hopffreesfpdown}
Let 
\begin{align*}
S &= \sgpres{a,b,f}{abab^2ab = b, fa=ba, fb = bf = f^2 = b^2},\\
T &= \sgpres{a,b}{abab^2ab = b}.
\end{align*}
Then $T$ is a large subsemigroup of $S$ since $S = T \cup \{f\}$.

Let $\psi : S \to S$ be a surjective endomorphism. Since $a$ and $f$
are the only indecomposable elements of $S$, we have $\{a,f\}\psi =
\{a,f\}$. Let $\vartheta = \psi^2$; then $\vartheta$ is a surjective
endomorphism of $S$ with $a\vartheta = a$ and $f\vartheta = f$.

If $b\vartheta = f$, then $f =_S b\vartheta =_S (abab^2ab)\vartheta =_S
afaf^2af =_S abab^2ab =_S b$, which is a contradiction. Hence $b\vartheta = w \in T$. Then
\[
ab =_S af = (a\vartheta)(f\vartheta) =_S (af)\vartheta =_S (ab)\vartheta =
(a\vartheta)(b\vartheta) = aw.
\]
But $S$ is left-cancellative by Adjan's theorem
\cite{adjan_defining}; hence $b =_S w$. That is, $b\vartheta = w
=_S b$. Since $a\vartheta = a$ and $f\vartheta = f$, the endomorphism
$\vartheta$ must be the identity mapping on $S$ and so
bijective. Hence $\psi$ is bijective and so an automorphism. This
proves that $S$ is hopfian.

Now the aim is to show that $T$ is non-hopfian. Notice that
\[
abab^3 =_T abab^2(abab^2ab) = (abab^2ab)ab^2 ab =_T bab^2ab.
\]
It easy to check that the rewriting system $(\{a,b\},\{abab^2ab
\imreduces b, abab^3 \imreduces bab^2ab\})$ is confluent and
noetherian. Clearly $T$ is also presented by this rewriting system.

The map
\[
\phi : T \to T; \qquad\qquad a \mapsto a,\qquad b\mapsto bab.
\]
is a well-defined endomorphism, and is surjective since $a\phi = a$ and
\begin{equation}
\label{eq:hopffreesfpdown1}
(ab^2)\phi =_T a(bab)^2 =_T abab^2ab \imreduces b.
\end{equation}
Furthermore, applying
\eqref{eq:hopffreesfpdown1} shows that
\[
(ab^2a^2b^2)\phi =_T (ab^2\;a\;ab^2)\phi =_T bab =_T b\phi.
\]
But both $ab^2a^2b^2$ and $b$ are irreducible and so $ab^2a^2b^2
\neq_T b$. Hence $\phi$ is not bijective and so $T$ is not hopfian.
\end{example}

The situation for co-hopficity mirrors that for hopficity:

\begin{theorem}[{\cite[\S~4]{cm_hopf}}]
Co-hopficity is not inherited in general by large subsemigroups
or by small extensions
\end{theorem}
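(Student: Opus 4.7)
The plan is to construct two explicit counterexamples, one for each direction, which dualize the structure of Example~\ref{eg:hopffreesfpdown} for hopficity. Since co-hopficity requires that every injective endomorphism be surjective (the dual of hopficity), each counterexample must exhibit a semigroup admitting an injective-but-not-surjective endomorphism, while the companion semigroup (small extension on one side, large subsemigroup on the other) is rigid enough that every injective endomorphism forces bijectivity.

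For the large subsemigroup direction, I would seek a semigroup $S$ together with a single distinguished element $f\in S$ such that $T=S\setminus\{f\}$ is a subsemigroup. On $T$, I want a free-like injective endomorphism whose image misses an irreducible word, for example one which sends a generator $b$ to a proper product such as $bab$ (the dual of the surjective map $b\mapsto bab$ used in Example~\ref{eg:hopffreesfpdown}); verifying non-surjectivity would be done by exhibiting an irreducible word in a confluent noetherian rewriting system for $T$ which is not in the image. To ensure $S$ itself is co-hopfian, $f$ should be chosen indecomposable and structurally forced to be fixed by any injective endomorphism of $S$, so that the extra relations in $S$ (involving $f$ but not held in $T$) rigidify the remaining generators.

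For the small extension direction, the symmetric strategy is to start with a co-hopfian semigroup $T$ and adjoin one or two carefully chosen elements, forming $S$ with a multiplication designed so that $S$ now admits an injective endomorphism whose image is properly contained in $S$. The cheapest way is to adjoin an identity or zero: such an element must be fixed by every endomorphism, and the new injective endomorphism would shift or embed $T$ into a proper subsemigroup of itself while leaving the adjoined element alone. The verification of co-hopficity of $T$ and of non-co-hopficity of $S$ would again be reduced to a rewriting-system calculation on explicit presentations.

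The main obstacle will be verifying co-hopficity of the ambient semigroup in each case, i.e.\ classifying all injective endomorphisms and showing each is surjective. As in Example~\ref{eg:hopffreesfpdown}, this typically begins by identifying the indecomposable elements (which must map to indecomposables under any endomorphism), iterating the endomorphism to land in a pointwise-fixing situation on that finite set, and then using one-sided cancellativity — conveniently obtained from Adjan's theorem on one-relator presentations — to deduce that the remaining generators are also fixed. Choosing the defining relations so that the Adjan-type cancellativity applies, while simultaneously admitting the desired injective-but-not-surjective endomorphism on the smaller or larger companion, is the delicate combinatorial part I would expect to require the most care.
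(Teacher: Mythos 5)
Your plan for the large-subsemigroup direction is workable in outline, though much heavier than necessary: the paper simply takes $T$ to be the free monogenic semigroup $\{x\}^+$, which fails to be co-hopfian because $x \mapsto x^2$ is injective but not surjective, and realizes it as $S - \{y\}$ for $S = \sgpres{x,y}{y^2=xy=yx=x^2}$, where a short argument shows any injective endomorphism of $S$ permutes $\{x,y\}$ and is therefore bijective (\fullref{Example}{eg:cohopfreesdown}). No Adjan-type cancellativity or confluent rewriting system is needed for this half.

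The small-extension direction of your proposal has a genuine gap. You propose to adjoin an identity or zero to a co-hopfian $T$, have the adjoined element be fixed by every injective endomorphism $\phi$ of $S$, and have $\phi$ ``shift or embed $T$ into a proper subsemigroup of itself.'' But if $\phi$ is injective, fixes $S-T$ pointwise, and maps $T$ into $T$, then $\phi|_T$ is an injective endomorphism of the co-hopfian semigroup $T$, hence bijective on $T$, and so $\phi$ is bijective on all of $S$ --- this is exactly the argument the paper uses to prove the \emph{positive} result that co-hopficity passes to small extensions in the finitely generated case. So any genuine counterexample must have $\phi$ move some element of $T$ out into $S-T$, and moreover $T$ must be non-finitely generated (otherwise that positive theorem applies); your toolkit of finite presentations, rewriting-system calculations, and Adjan's theorem is therefore pointed in the wrong direction for this half. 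The paper instead builds a $3$-nilpotent semigroup $S_\Gamma$ from an infinite simple graph $\Gamma$ (\fullref{Definition}{def:sgrpfromsimpgraph}), so that co-hopficity of $S_\Gamma$ reduces to co-hopficity of $\Gamma$ and cofinite subgraphs give large subsemigroups; choosing $\Gamma$ non-co-hopfian with a co-hopfian cofinite subgraph $\Delta = \Gamma - \{y_0\}$ (\fullref{Example}{eg:cohopfreesup}) yields the counterexample, and the witnessing injective endomorphism of $S_\Gamma$ sends the vertex $y_{-1} \in S_\Delta$ onto the removed vertex $y_0 \in S_\Gamma - S_\Delta$ --- precisely the behaviour your scheme excludes.
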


The following counterexample establishes the result for large
subsemigroups:

\begin{example}[{\cite[Example~4.1]{cm_hopf}}]
\label{eg:cohopfreesdown}
Let $T$ be the free semigroup with basis $x$. Then any map $x \mapsto
x^k$ extends to an injective endomorphism from $T$ to itself; for $k
\geq 2$ this endomorphism is not bijective and so not an
automorphism. Thus $T$ is not co-hopfian.

Let
\[
S = \sgpres{x,y}{y^2 = xy = yx = x^2}
\]
Notice that $S = T \cup \{y\}$ since all products of two or more
generators must lie in $T$. So $S$ is a small extension of $T$. It is
straightforward to prove that any injective endomorphism of $S$ must
map $\{x,y\}$ to itself, and it follows that such a map must be
bijective. So $S$ is co-hopfian.
\end{example}

The counterexample for the small extensions part depends on the
following construction that builds a semigroup from a simple graph:

\begin{definition}
\label{def:sgrpfromsimpgraph}
Let $\Gamma$ be a simple graph. Let $V$ be the set of vertices of $\Gamma$. Let $S_\Gamma = V \cup \{e,n,0\}$. Define a multiplication on $S_\Gamma$ by
\begin{align*}
v_1v_2 &=
\begin{cases} 
e & \text{if there is an edge between $v_1$ and $v_2$ in $\Gamma$,} \\
n & \text{if there is no edge between $v_1$ and $v_2$ in $\Gamma$,}
\end{cases} 
&& \text{for $v_1,v_2 \in V$,} \\
ve &= ev = vn = nv = 0 && \text{for $v \in V$,} \\
en &= ne = e^2 = n^2 = 0 \\
0x &= x0 = 0 && \text{for $x \in S_\Gamma$.}
\end{align*}
Notice that all products of two elements of $S_\Gamma$ lie in
$\{e,n,0\}$ and all products of three elements are equal to $0$. Thus
this multiplication is associative and $S_\Gamma$ is a semigroup.
\end{definition}

It turns out that $S_\Gamma$ is co-hopfian (as a semigroup) if and
only if $\Gamma$ is hopfian (as a graph)
\cite[Lemma~4.5]{cm_hopf}. Furthermore, if $\Delta$ is a cofinite
subgraph of $\Gamma$, then $S_\Delta$ is a large subsemigroup of
$S_\Gamma$ \cite[Lemma~4.4]{cm_hopf}.

\begin{example}
\label{eg:cohopfreesup}
Define an undirected graph $\Gamma$ as follows. The vertex set is
\[
V = \{x_i, y_i: i \in \zset\} \cup \{z_j : j \in \nset\},
\]
and there are edges between $x_i$ and $y_i$ for all $i \in \zset$,
between $y_j$ and $z_j$ for all $j \in \nset$, and between $x_i$ and
$x_{i+1}$ for all $i \in \zset$. The graph $\Gamma$ is as shown in
\fullref{Figure}{fig:graphgamma}. Let $\Delta$ be the subgraph induced
by $W = V - \{y_0\}$; the graph $\Delta$ is as shown in
\fullref{Figure}{fig:graphgammadash}. Note that $\Gamma$ and $\Delta$
are simple.

\begin{figure}[t]
\centerline{%
\begin{tikzpicture}[node distance=5mm]
\node (x0) {$x_0$};
\node (x1) [right=of x0] {$x_1$};
\node (x2) [right=of x1] {$x_2$};
\node (x3) [right=of x2] {$x_3$};
\node (x4) [right=of x3] {};
\node (xm1) [left=of x0] {$x_{-1}$};
\node (xm2) [left=of xm1] {$x_{-2}$};
\node (xm3) [left=of xm2] {$x_{-3}$};
\node (xm4) [left=of xm3] {};
\node (y0) [above=of x0] {$y_0$};
\node (y1) [above=of x1] {$y_1$};
\node (y2) [above=of x2] {$y_2$};
\node (y3) [above=of x3] {$y_3$};
\node (ym1) [above=of xm1] {$y_{-1}$};
\node (ym2) [above=of xm2] {$y_{-2}$};
\node (ym3) [above=of xm3] {$y_{-3}$};
\node (z1) [above=of y1] {$z_1$};
\node (z2) [above=of y2] {$z_2$};
\node (z3) [above=of y3] {$z_3$};
\draw[-] (x0) edge (x1);
\draw[-] (x1) edge (x2);
\draw[-] (x2) edge (x3);
\draw[-] (x3) edge[dashed] (x4);
\draw[-] (x0) edge (xm1);
\draw[-] (xm1) edge (xm2);
\draw[-] (xm2) edge (xm3);
\draw[-] (xm3) edge[dashed] (xm4);
\draw[-] (x0) edge (y0);
\draw[-] (x1) edge (y1);
\draw[-] (x2) edge (y2);
\draw[-] (x3) edge (y3);
\draw[-] (xm1) edge (ym1);
\draw[-] (xm2) edge (ym2);
\draw[-] (xm3) edge (ym3);
\draw[-] (y1) edge (z1);
\draw[-] (y2) edge (z2);
\draw[-] (y3) edge (z3);
\end{tikzpicture}}
\caption{The graph $\Gamma$ from \fullref{Example}{eg:cohopfreesup}.}
\label{fig:graphgamma}
\end{figure}

\begin{figure}[t]
\centerline{%
\begin{tikzpicture}[node distance=5mm]
\node (x0) {$x_0$};
\node (x1) [right=of x0] {$x_1$};
\node (x2) [right=of x1] {$x_2$};
\node (x3) [right=of x2] {$x_3$};
\node (x4) [right=of x3] {};
\node (xm1) [left=of x0] {$x_{-1}$};
\node (xm2) [left=of xm1] {$x_{-2}$};
\node (xm3) [left=of xm2] {$x_{-3}$};
\node (xm4) [left=of xm3] {};
\node (y1) [above=of x1] {$y_1$};
\node (y2) [above=of x2] {$y_2$};
\node (y3) [above=of x3] {$y_3$};
\node (ym1) [above=of xm1] {$y_{-1}$};
\node (ym2) [above=of xm2] {$y_{-2}$};
\node (ym3) [above=of xm3] {$y_{-3}$};
\node (z1) [above=of y1] {$z_1$};
\node (z2) [above=of y2] {$z_2$};
\node (z3) [above=of y3] {$z_3$};
\draw[-] (x0) edge (x1);
\draw[-] (x1) edge (x2);
\draw[-] (x2) edge (x3);
\draw[-] (x3) edge[dashed] (x4);
\draw[-] (x0) edge (xm1);
\draw[-] (xm1) edge (xm2);
\draw[-] (xm2) edge (xm3);
\draw[-] (xm3) edge[dashed] (xm4);
\draw[-] (x1) edge (y1);
\draw[-] (x2) edge (y2);
\draw[-] (x3) edge (y3);
\draw[-] (xm1) edge (ym1);
\draw[-] (xm2) edge (ym2);
\draw[-] (xm3) edge (ym3);
\draw[-] (y1) edge (z1);
\draw[-] (y2) edge (z2);
\draw[-] (y3) edge (z3);
\end{tikzpicture}}
\caption{The cofinite subgraph $\Delta$ of the graph $\Gamma$ from
  \fullref{Example}{eg:cohopfreesup}.}
\label{fig:graphgammadash}
\end{figure}

Define a map
\begin{align*}
\phi {}&: V \to V; & x_i & \mapsto x_{i+1} &&\text{for all $i \in \zset$,}\\
&& y_i & \mapsto y_{i+1} &&\text{for all $i \in \zset$,}\\
&& z_i & \mapsto z_{i+1} &&\text{for all $i \in \nset$.}
\end{align*}
It is easy to see that $\phi$ is an injective endomorphism of
$\Gamma$. However, $\phi$ is not a bijection since $z_1 \notin
\im\phi$. Thus graph $\Gamma$ is not co-hopfian. On the other hand, it
is straightforward to prove that $\Delta$ is co-hopfian.

Hence $S_\Gamma$ is non-co-hopfian small extension of the co-hopfian
semigroup $S_\Delta$.
\end{example}

As for hopficity, there is a positive result for passing to small
extensions in the finitely generated case:

\begin{theorem}[{\cite[Theorem~4.2]{maltcev_hopfian}}]
Within the class of finitely generated semigroups, co-hopficity is
inherited by small extensions but not in general by large
subsemigroups.
\end{theorem}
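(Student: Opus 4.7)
The negative direction is already supplied by
\fullref{Example}{eg:cohopfreesdown}: there the large subsemigroup
$T$ is the free monogenic semigroup, which is finitely generated yet
not co-hopfian, while its small extension $S$ is co-hopfian. So only
the positive direction needs a new argument, which I sketch via a
pigeon-hole argument on orbits of generators.

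Assume $T$ is finitely generated and co-hopfian, and let $S$ be a
small extension of $T$. Fix a finite generating set $A$ for $T$ and
write $F = S - T$, which is finite. Let $\phi : S \to S$ be an
injective endomorphism; the goal is to prove $\phi$ is surjective.
The pivotal step is to produce $N \in \nset$ with $a\phi^N \in T$ for
every $a \in A$: since $A$ generates $T$ and $\phi^N$ is a
homomorphism, this immediately yields $T\phi^N \subseteq T$.

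To find such an $N$, fix $a \in A$ and examine its forward orbit
$a, a\phi, a\phi^2, \ldots$ in $S$. Either only finitely many
iterates lie in $F$, so $a\phi^n \in T$ for every sufficiently large
$n$; or infinitely many iterates lie in $F$, in which case
pigeon-hole supplies $i < j$ with $a\phi^i = a\phi^j$, and
injectivity of $\phi$ promotes this to $a = a\phi^{j-i}$, so $a$ is a
periodic point. Choosing $N$ large enough in the first case and a
common multiple of the periods in the second, and combining these
requirements over the finite set $A$, produces a single $N$ with
$A\phi^N \subseteq T$, and hence $T\phi^N \subseteq T$.

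Now $\phi^N|_T : T \to T$ is an injective endomorphism, so
co-hopficity of $T$ yields $T\phi^N = T$. Injectivity of $\phi$ on
$S$ then forces $F\phi^N \subseteq F$: if some $f \in F$ had
$f\phi^N \in T$, then $f\phi^N = t\phi^N$ for some $t \in T$ (because
$T\phi^N = T$), contradicting injectivity as $f \neq t$. Since
$\phi^N$ is injective on the finite set $F$, we conclude $F\phi^N =
F$, hence $S\phi^N = T \cup F = S$. Finally $S = S\phi^N =
(S\phi^{N-1})\phi \subseteq S\phi$, so $\phi$ is surjective. The main
obstacle in the plan is the orbit case-split: it is there that
injectivity of $\phi$ on the whole of $S$ (not merely on $T$) is
essential, for it is what converts a merely recurrent orbit into an
honestly periodic one and thereby manufactures an injective
self-endomorphism of $T$ on which co-hopficity can act.
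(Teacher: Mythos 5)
Your proposal is correct and follows essentially the same route as the paper: find $N$ with $A\phi^{N}\subseteq T$ (hence $T\phi^{N}\subseteq T$), apply co-hopficity of $T$ to the injective map $\phi^{N}|_{T}$, handle the finite complement by injectivity, and reuse Example~\ref{eg:cohopfreesdown} for the negative direction. The orbit/pigeon-hole argument you supply for producing $N$, and the final step $S=S\phi^{N}\subseteq S\phi$, are correct fillings-in of details the paper only sketches.
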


Note that the semigroup in \fullref{Example}{eg:cohopfreesdown} is
finitely generated, and so also function as a counterexample in the large
subsemigroups part of this result.

For small extensions, the proof proceeds by showing that if $\phi : S
\to S$ is an injective endomorphism and $X$ generates $T$, then
$X\phi^{n} \subseteq T$ for some $n \in \nset$, and so $T\phi^n
\subseteq T$ since $X$ generates $T$. Since $\phi$ is injective, so is
$\phi^n$. Since $T$ is co-hopfian, $\phi^n|_T$ is bijective, ans so
$\phi_T$ is bijective. Since $\phi_{S-T}$ is an injective and $S-T$ is
finite, $\phi_{S-T}$ is bijective. Hence $\phi$ is bijective.

\section{Subsemigroups}
\label{sec:subsemigroups}

There are a number of finiteness conditions for semigroups, based
around the notion of subsemigroups and ideals. Let us first consider
subsemigroups:

\begin{theorem}[{\cite[\S~11]{ruskuc_largesubsemigroups}}]
The property of having finitely many semigroups is inherited by small
extensions and by large subsemigroups.
\end{theorem}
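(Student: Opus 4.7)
The plan is to prove both directions separately, noting that both are essentially elementary counting arguments that exploit the finiteness of $S-T$.

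For the large subsemigroups direction, the result is immediate: the property of having finitely many subsemigroups is inherited by arbitrary subsemigroups, since every subsemigroup of $T$ is also a subsemigroup of $S$, and so the number of subsemigroups of $T$ is bounded above by the number of subsemigroups of $S$.

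For the small extensions direction, suppose $T$ has only finitely many subsemigroups and that $S$ is a small extension of $T$. I would observe that any subsemigroup $U$ of $S$ is completely determined by the ordered pair
\[
(U \cap T,\; U \cap (S-T)),
\]
since $U = (U \cap T) \cup (U \cap (S-T))$. Now $U \cap T$ is either empty or a subsemigroup of $T$, so there are at most $1 + k$ possibilities for this component, where $k$ is the (finite) number of subsemigroups of $T$. On the other hand, $U \cap (S-T)$ is a subset of the finite set $S-T$, so there are at most $2^{|S-T|}$ possibilities. Hence the total number of subsemigroups of $S$ is at most $(1+k)\cdot 2^{|S-T|}$, which is finite.

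There is no real obstacle here: the argument does not need to verify that every such pair actually arises from a subsemigroup (we only need an upper bound on the count), and the non-trivial direction of Reidemeister--Schreier-style rewriting is not required. The only minor point to be careful about is handling the case where $U$ is entirely contained in $S-T$, which is already absorbed by allowing $U \cap T$ to be empty in the bookkeeping above.
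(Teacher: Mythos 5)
Your proof is correct, but it takes a different route from the paper. The paper disposes of this theorem (together with the companion statement that ``all subsemigroups are large'' is inherited in both directions) by a single observation: a semigroup has only finitely many subsemigroups if and only if it is finite. Granting that, both directions are immediate, since finiteness is obviously preserved when one adds or removes the finitely many elements of $S-T$. Your argument instead bounds the number of subsemigroups of $S$ directly: each subsemigroup $U$ of $S$ is determined by the pair $(U\cap T, U\cap(S-T))$, the first component being empty or a subsemigroup of $T$ and the second a subset of the finite set $S-T$, giving at most $(1+k)\cdot 2^{|S-T|}$ subsemigroups. This is a valid upper-bound count (and you are right that one need not check which pairs are realized), and it has the advantage of being entirely self-contained: it avoids the nontrivial half of the paper's characterization, namely that every \emph{infinite} semigroup has infinitely many subsemigroups. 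What the paper's approach buys is economy and insight: the characterization shows that the hypothesis already forces $T$, and hence $S$, to be finite, which settles this theorem, its converse direction, and the neighbouring ``all subsemigroups are large'' theorem all at once, and explains why these results are regarded as trivial in contrast with their analogues for ideals.
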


\begin{theorem}[{\cite[\S~11]{ruskuc_largesubsemigroups}}]
The property of all subsemigroups being large is inherited by
extensions and by large subsemigroups.
\end{theorem}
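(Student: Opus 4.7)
The plan is to observe that the hypothesis ``every subsemigroup is large'' is equivalent to finiteness of the ambient semigroup, after which both inheritance claims reduce to the triviality that finiteness is inherited by small extensions and by arbitrary subsemigroups.

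The only non-trivial direction of the equivalence is that the property forces finiteness. Let $R$ be a semigroup in which every subsemigroup is large, and suppose for contradiction that $R$ is infinite. Pick any $r \in R$ and consider the monogenic subsemigroup $\langle r \rangle$. If $\langle r \rangle$ were finite and proper in $R$, then largeness would make $R \setminus \langle r \rangle$ finite, so $R = \langle r \rangle \cup (R \setminus \langle r \rangle)$ would be finite, a contradiction. Hence $\langle r \rangle$ is infinite, and therefore $\langle r \rangle \isom (\nset, +)$ with all powers of $r$ pairwise distinct. But then the subsemigroup $\langle r^2 \rangle$ has complement in $R$ containing the infinite set $\{r, r^3, r^5, \ldots\}$, contradicting largeness of $\langle r^2 \rangle$. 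Conversely, in a finite semigroup every subsemigroup trivially has finite complement.

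With the equivalence in hand, inheritance in both directions is immediate. If $T$ has the property then $T$ is finite; any small extension $S$ of $T$ is then the union of the finite sets $T$ and $S \setminus T$, hence finite, hence has the property. Conversely, if $S$ has the property then $S$ is finite, so any large subsemigroup $T$ of $S$ is a fortiori finite, and therefore has the property.

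The genuine content of the argument is the initial observation that the hypothesis collapses to finiteness; once this is noticed, there is nothing further to do. I foresee no technical obstacle, and in particular neither direction requires any rewriting machinery of the type used in \fullref{Theorem}{thm:finpres}.
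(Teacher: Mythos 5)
Your proof is correct and follows exactly the paper's approach: the paper likewise disposes of this statement by noting that a semigroup has all subsemigroups large if and only if it is finite, whence both inheritance claims are trivial. You simply supply the details of that equivalence (via the monogenic subsemigroup $\langle r^2\rangle$ of an infinite $\langle r\rangle$), which the paper asserts without proof.
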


Both of these results follow trivially from the fact that a semigroup
contains only large subsemigroups, or contains only finitely many
subsemigroups, if and only if it is finite. The interest in these
results comes from comparing them with their analogues for ideals,
discussed in the following section.

\begin{theorem}[{\cite[\S~11]{ruskuc_largesubsemigroups}}]
The property of having a minimal subsemigroup is inherited by small
extensions but not in general by large subsemigroups.
\end{theorem}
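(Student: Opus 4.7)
The positive direction for small extensions is essentially immediate. Suppose $T$ has a minimal subsemigroup $M$, and let $S$ be any extension of $T$ (finite Rees index is irrelevant here). I claim $M$ remains a minimal subsemigroup of $S$: if $N$ is a subsemigroup of $S$ with $\emptyset \neq N \subseteq M$, then $N \subseteq M \subseteq T$, so $N$ is a non-empty subsemigroup of $T$ contained in $M$, and minimality of $M$ in $T$ forces $N = M$.

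For the counterexample in the large subsemigroups direction, my plan is to start with a semigroup $T$ that manifestly has no minimal subsemigroup and then adjoin a single idempotent (or a zero) to produce a small extension $S$ that does. A natural candidate is $T = (\nset, +)$: given any non-empty subsemigroup $U$ of $T$ and any $u \in U$, the set $\{ku : k \geq 2\} = \{2u, 3u, 4u, \ldots\}$ is a subsemigroup (since $k,l \geq 2$ implies $k+l \geq 2$) that is properly contained in $\langle u \rangle \subseteq U$, so $U$ is not minimal.

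Then take $S = T \cup \{0\}$ with $0$ adjoined as a zero element. Since $|S \setminus T| = 1$, $T$ is a large subsemigroup of $S$. The singleton $\{0\}$ is plainly a (minimal) subsemigroup of $S$, so $S$ has a minimal subsemigroup while $T$ does not.

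There is no real obstacle here; the only point worth double-checking is that $\{ku : k \geq 2\}$ really is closed under addition (which it is, as noted), so that the argument in the previous paragraph genuinely shows $(\nset,+)$ has no minimal subsemigroup.
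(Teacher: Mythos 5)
Your proposal is correct and follows essentially the same route as the paper: the small-extension direction is immediate because a minimal subsemigroup of $T$ stays minimal in any extension, and for the negative direction the paper likewise takes a semigroup with no minimal subsemigroup (it cites a free semigroup, of which your $(\nset,+)$ is the rank-one instance) and adjoins a zero. Your explicit verification that $(\nset,+)$ has no minimal subsemigroup is a detail the paper leaves unstated, but the argument is the same.
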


This is immediate, since a minimal subsemigroup of a semigroup $T$ is
a minimal subsemigroup of an extension of $T$. However, this property
is not inherited by large subsemigroups: Let be $T$ be a semigroup
with no minimal subsemigroups (for instance a free semigroup). Then it
is a large subsemigroup of $T^0$, which contains the minimal
subsemigroup $\{0\}$.

\section{Ideals, Green's relations, and related properties}
\label{sec:ideals}

Let us now turn to ideals, both one- and two-sided. 

\begin{theorem}[{\cite[Theorem~10.5]{ruskuc_largesubsemigroups}}]
\label{thm:suballleftidealslarge}
The property of all left ideals being large is inherited by large
subsemigroups, but not in general by small extensions.
\end{theorem}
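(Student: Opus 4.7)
My plan is to handle the two halves separately, with most of the work in a short containment chase.

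For inheritance by large subsemigroups, I would first reduce to principal left ideals: every non-empty left ideal of $T$ contains some principal left ideal $T^1 t$, and supersets of large sets are large, so it suffices to show $T \setminus T^1 t$ is finite for every $t \in T$. The decomposition $S^1 t = T^1 t \cup (S-T)\cdot t$ gives
\[
S^1 t \setminus T^1 t \;\subseteq\; (S-T)\cdot t,
\]
which is finite because $S-T$ is. Combined with the hypothesis that $S \setminus S^1 t$ is finite (applied to the principal left ideal of $S$ generated by $t$), this yields
\[
T \setminus T^1 t \;\subseteq\; S \setminus T^1 t \;=\; (S \setminus S^1 t) \cup (S^1 t \setminus T^1 t),
\]
which is finite.

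For the failure under small extensions, I would exhibit a counterexample by adjoining a zero. Take $T = (\zset,+)$ as a semigroup; since $T$ is a group, the only left ideal of $T$ is $T$ itself, whose complement is empty, so all left ideals of $T$ are trivially large. Let $S = T \cup \{\theta\}$ be obtained by adjoining a zero $\theta$ (so that $x\theta = \theta x = \theta$ for every $x \in S$). Then $|S - T| = 1$, so $S$ is a small extension of $T$; but $\{\theta\}$ is a left ideal of $S$ whose complement is the infinite set $T$, so not every left ideal of $S$ is large.

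No substantial obstacle is expected: the positive direction is a one-line containment chase once one reduces to principal left ideals, and the counterexample is essentially the observation that adjoining a zero to any infinite semigroup always produces a singleton left ideal whose complement has the cardinality of the original semigroup, and so fails to be large.
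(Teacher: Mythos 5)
Your proof is correct and follows essentially the same route as the paper: the positive half rests on the same decomposition $S^1t = T^1t \cup (S-T)t$ together with the reduction to principal left ideals, and the negative half adjoins a zero to an infinite semigroup all of whose left ideals are large. The only cosmetic difference is that the paper's counterexample uses the free semigroup of rank one where you use the group $(\zset,+)$; both work for the same reason.
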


Inheritance by large subsemigroups is proved as follows: Let $S$ be a
semigroup in which every left ideal is large and $T$ is a large
subsemigroup of $S$. Then every principal left ideal of $T$ is large
since
\[
S^1t = T^1t \cup (S-T)t.
\]
Since every left ideal is a union of principal left ideals, the result
follows.

To see that the property of all left ideals being large ideals is not
inherited by small extensions, let $T$ be an infinite semigroup in
which all left ideals are large (such as the free semigroup of rank
$1$). Then $T^0$ is a small extension of $T$, but contains the ideal
$\{0\}$, whose Rees index is infinite because $T$ is infinite.

\begin{theorem}[{\cite[Theorem~7.1]{gray_ideals} \& \cite[\S~11]{ruskuc_largesubsemigroups}}]
\label{thm:suballidealslarge}
The property of all ideals being large is inherited by large
subsemigroups but not in general by small extensions.
\end{theorem}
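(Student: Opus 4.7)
The plan is to mirror very closely the strategy of the preceding \fullref{Theorem}{thm:suballleftidealslarge}, adapting it from left ideals to two-sided ideals, and then to adapt essentially the same counterexample for the small-extensions direction.

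For the large-subsemigroup direction, I would let $S$ be a semigroup in which every (two-sided) ideal is large and let $T$ be a large subsemigroup of $S$. It suffices to show that every principal ideal of $T$ is large, since an arbitrary ideal is a union of principal ideals and hence contains a principal ideal, so inherits finite complement. For $t \in T$, the key decomposition is
\[
S^1tS^1 = T^1tT^1 \;\cup\; T^1t(S-T) \;\cup\; (S-T)tT^1 \;\cup\; (S-T)t(S-T),
\]
obtained from $S^1 = T^1 \cup (S-T)$. The last three sets are finite because $S - T$ is finite; hence $S^1tS^1 \setminus T^1tT^1$ is finite. Since $S^1tS^1$ is a large ideal of $S$ by hypothesis, $S \setminus S^1tS^1$ is finite, and therefore
\[
T \setminus T^1tT^1 \;\subseteq\; (S \setminus S^1tS^1) \;\cup\; (S^1tS^1 \setminus T^1tT^1)
\]
is finite. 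So $T^1tT^1$ is large in $T$, completing this direction.

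For the failure in small extensions, I would reuse the idea from \fullref{Theorem}{thm:suballleftidealslarge}: take any infinite semigroup $T$ in which every ideal is large, for instance the free semigroup of rank~$1$ (whose ideals are exactly the cofinite tails). Then $T^0$ is a small extension of $T$, but $\{0\}$ is an ideal of $T^0$ whose complement $T$ is infinite, so $\{0\}$ is not large. Hence $T^0$ violates the property despite $T$ satisfying it.

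There is no real obstacle: the proof is essentially a routine two-sided variant of the previous one. The only minor point to be careful about is the set-theoretic decomposition of $S^1tS^1$, ensuring that all the ``error'' pieces involve at least one factor from the finite set $S-T$ so that they are finite; this is immediate once one writes $S^1 = T^1 \cup (S-T)$ and distributes.
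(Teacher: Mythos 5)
Your small-extensions counterexample is correct and is exactly the one the paper intends (it recycles the $T^0$ construction from \fullref{Theorem}{thm:suballleftidealslarge}). The problem is in the large-subsemigroups direction. The decomposition
\[
S^1tS^1 = T^1tT^1 \,\cup\, T^1t(S-T) \,\cup\, (S-T)tT^1 \,\cup\, (S-T)t(S-T)
\]
is fine, but the assertion that ``the last three sets are finite because $S-T$ is finite'' is false for two of them. Only $(S-T)t(S-T)$ is finite, being the image of the finite set $(S-T)\times(S-T)$. The sets $T^1t(S-T)$ and $(S-T)tT^1$ are products of the generally \emph{infinite} sets $T^1t$ and $tT^1$ with a finite set, and such products can be infinite: for a fixed $v\in S-T$ the map $w\mapsto wv$ may carry the infinitely many elements of $T^1t$ to infinitely many distinct elements (compare $\{a^n:n\geq 1\}\{b\}$ in a free semigroup). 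So you have not shown that $S^1tS^1\setminus T^1tT^1$ is finite, nor even that $\bigl(T\cap S^1tS^1\bigr)\setminus T^1tT^1$ is finite, and the argument collapses at that step. This is exactly where the two-sided case diverges from \fullref{Theorem}{thm:suballleftidealslarge}: there the error term $(S-T)t$ genuinely is finite because it is a finite set times a single fixed element. The point you dismiss as ``the only minor point to be careful about'' is precisely the point at which the analogy fails.

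To repair the argument one would need to show that, with finitely many exceptions, every element of $T$ of the form $utv$ with $u\in T^1$, $v\in S-T$ (or symmetrically) already lies in $T^1tT^1$, and that requires a genuine further idea; this direction is the content of \cite[Theorem~7.1]{gray_ideals}, which the survey cites rather than reproves (indeed, the survey's own summary table records this direction with a question mark, which underlines that the two-sided statement is not the routine variant of the one-sided one that your proposal treats it as). The paper's one-line ``analogous reasoning'' gloss is therefore too optimistic if read literally, and your write-up makes the hidden difficulty visible rather than resolving it.
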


This is proved by reasoning analogous to
\fullref{Theorem}{thm:suballleftidealslarge}.

\begin{theorem}[{\cite[Theorem~10.3]{ruskuc_largesubsemigroups}}]
\label{thm:minleftideal}
The property of having a minimal left ideal is inherited by small
extensions, but not in general by large subsemigroups.
\end{theorem}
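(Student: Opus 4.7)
I would split the theorem into its positive half (small extensions) and its counterexample (large subsemigroups), and handle these independently.

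For the small-extensions direction, fix a minimal left ideal $L$ of $T$ and any $\ell\in L$, so that $L = T^1\ell$ by minimality. The plan is to find a minimal left ideal of $S$ inside the principal left ideal
\[
S^1\ell = T^1\ell\cup(S-T)\ell = L\cup(S-T)\ell
\]
of $S$. The key step, and the hardest, is the dichotomy that every left ideal $M$ of $S$ with $M\subseteq S^1\ell$ either contains $L$ or is disjoint from $L$. Indeed, if $M\cap L\neq\emptyset$, then $M\cap L$ is itself a left ideal of $T$ contained in $L$ (because $T(M\cap L)\subseteq SM\cap TL\subseteq M\cap L$), so minimality of $L$ in $T$ forces $M\cap L = L$ and hence $L\subseteq M$. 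Once this dichotomy is in hand, every candidate $M$ is determined by a subset of the finite set $(S-T)\ell$: either $M = L\cup X$ with $X\subseteq(S-T)\ell$, or $M\subseteq(S-T)\ell$. Only finitely many candidates therefore exist, and a $\subseteq$-minimal $M_0$ can be chosen among them. Any strictly smaller left ideal of $S$ would again lie in $S^1\ell$ and contradict the minimality of $M_0$ in the family, so $M_0$ is a minimal left ideal of $S$ in the absolute sense.

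For the large-subsemigroups direction I would produce a counterexample by zero-adjunction, in the same spirit as the construction used after Theorem~\ref{thm:suballleftidealslarge}. Take $T = A^+$, the free semigroup on a non-empty alphabet $A$; the chain $A^+w\supsetneq A^+aw\supsetneq A^+a^2w\supsetneq\cdots$ (for any $a\in A$ and $w\in T$) shows that $T$ has no minimal left ideal. But $T^0$ is a small extension of $T$ of Rees index $2$ in which $\{0\}$ is trivially a minimal left ideal, since $s\cdot 0 = 0$ for every $s\in T^0$. Hence the property fails to be inherited by large subsemigroups.

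Everything here should be routine once the dichotomy above is isolated; the rest is bookkeeping combined with the finiteness of $S-T$, and the free-semigroup construction handles the negative half directly.
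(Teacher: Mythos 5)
Your counterexample for the large-subsemigroups half is exactly the paper's: a semigroup $T$ with no minimal left ideal (the free semigroup works) is a large subsemigroup of $T^0$, which has the minimal left ideal $\{0\}$; nothing to compare there beyond noting that your descending chain $A^+w \supsetneq A^+aw \supsetneq \cdots$ does the job because it can be started from any $w$, and every left ideal contains a principal one. For the small-extensions half, however, your argument is genuinely different from, and more direct than, the one the paper sketches (following Ru\v{s}kuc): the paper argues by contradiction, producing elements $x_1,x_2,\ldots$ of $T$, each lying in a minimal $\gL^T$-class, with $L^S_{x_1} > L^S_{x_2} > \cdots$ an infinite strictly descending chain of $\gL^S$-classes, and then using finiteness of $S-T$ and minimality of the $T^1x_i$ to force $x_i = x_j$ for some $i \neq j$. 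You instead construct a minimal left ideal of $S$ explicitly: starting from $L = T^1\ell$ minimal in $T$, you write $S^1\ell = L \cup (S-T)\ell$, establish the dichotomy that a left ideal $M$ of $S$ inside $S^1\ell$ either contains $L$ or is disjoint from it (correct, since a nonempty $M \cap L$ is a left ideal of $T$ contained in the minimal $L$), deduce that only finitely many left ideals of $S$ lie inside $S^1\ell$, and take a $\subseteq$-minimal one $M_0$; any left ideal properly inside $M_0$ would again belong to this finite family, so $M_0$ is minimal in $S$ in the absolute sense. I checked each of these steps and they are sound. Your route buys a shorter, constructive proof that actually locates a minimal left ideal of $S$ inside the principal left ideal $S^1\ell$; the paper's proof-by-contradiction via descending $\gL^S$-chains is less transparent but is phrased in the language of $\gL$-classes that the survey reuses for the neighbouring results on finitely many left ideals and on $\min_{\gL}$.
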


The proof for small extensions is by contradiction: Assume that a
semigroup $T$ has a minimal left ideal, but that a small extension $S$
of $T$ does not have one. The first step is to prove that there exist
$x,y \in T$ such that $L^T_x$ and $L^T_y$ are minimal and that $L^S_x
> L^S_y$. Repeating this, one obtains an infinite descending sequence
of $L^S_{x_1} > L^S_{x_2} > \ldots$ for which each $x_i$ lies in a
minimal $L^T$-class, or equivalently for which $T^1x_i$ is a minimal
left ideal of $T$. However, it can be shown, using finiteness of $S-T$
and the fact that $T^1x_i$ is a minimal left ideal, that $x_i = x_j$
for some $i \neq j$, contradicting the fact that $L^S_{x_i} >
L^S_{x_j}$ and completing the proof.

To see that having a minimal left ideal is not inherited by large
subsemigroups: Let $T$ be a semigroup with no minimal left ideal. Then
it is a large subsemigroup of $T^0$, which has minimal (left) ideal
$\{0\}$.

\begin{theorem}[{\cite[\S~11]{ruskuc_largesubsemigroups}}]
The property of having a minimal ideal is inherited by small
extensions, but not in general by large subsemigroups.
\end{theorem}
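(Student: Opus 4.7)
The plan for the small extensions direction is to exhibit a minimum ideal of $S$ explicitly. The key observation is that since $S-T$ is finite, the family $\mathcal{I}_B$ of ideals of $S$ that are entirely contained in $S-T$ is itself finite, being a subfamily of the power set of $S-T$. Any ideal $J$ of $S$ not in $\mathcal{I}_B$ must meet $T$, in which case $J\cap T$ is a non-empty ideal of $T$, and hence contains the minimum ideal $K$ of $T$; it follows that $J$ contains the ideal $I_0:=\langle K\rangle_S$ of $S$ generated by $K$.

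Next I would invoke the observation that the intersection of any two non-empty ideals $J_1,J_2$ of a semigroup is a non-empty ideal, because $ab\in J_1\cap J_2$ whenever $a\in J_1$ and $b\in J_2$. Consequently $\mathcal{I}_B$ is closed under pairwise intersection; together with its finiteness, this gives (when $\mathcal{I}_B$ is non-empty) a minimum element $J^*=\bigcap\mathcal{I}_B$. Setting $M:=I_0\cap J^*$, or $M:=I_0$ when $\mathcal{I}_B=\emptyset$, the same observation shows that $M$ is a non-empty ideal of $S$; and by construction $M$ is contained in every ideal of $S$, so it is the minimum ideal.

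For the large subsemigroups direction, I would invoke the standard $T^0$ counterexample: let $T$ be any semigroup with no minimum ideal (for example, a free semigroup, whose ideals $\{w\in A^+:|w|\ge n\}$ form a strictly descending chain with empty intersection). Then $T$ is a large subsemigroup of $T^0$, and $T^0$ has minimum ideal $\{0\}$, since $0$ is a two-sided zero.

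The only subtle step is handling the ideals of $S$ that are disjoint from $T$; once one observes that finiteness of $S-T$ makes the family $\mathcal{I}_B$ both finite and closed under intersection, the construction of the minimum ideal is immediate and one avoids the descending-chain analysis used in the proof of \fullref{Theorem}{thm:minleftideal}.
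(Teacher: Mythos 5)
Your argument is correct, and for the small-extensions direction it takes a genuinely different route from the one the paper indicates. The paper states that the proof essentially parallels that of \fullref{Theorem}{thm:minleftideal}: a proof by contradiction that produces an infinite strictly descending chain of $\gJ^S$-classes of elements lying in minimal $\gJ^T$-classes, and then uses finiteness of $S-T$ to force a repetition. You instead construct the minimum ideal of $S$ directly, and the engine of your argument is the fact---special to two-sided ideals---that any two non-empty ideals intersect non-trivially. (You also use this fact tacitly in two places: when you pass from the hypothesis that $T$ has a \emph{minimal} ideal to the existence of a \emph{minimum} ideal $K$ contained in every non-empty ideal of $T$, and again in the free-semigroup example, where the descending chain with empty intersection rules out a minimum ideal and hence, by the same fact, a minimal one. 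It would be worth stating this explicitly, but it is standard and correct.) Your approach buys a shorter, constructive proof exhibiting the kernel of $S$ explicitly as $I_0\cap\bigcap\mathcal{I}_B$; what it gives up is uniformity, since two left ideals need not intersect, so the intersection trick does not adapt to \fullref{Theorem}{thm:minleftideal}, whereas the descending-chain scheme handles the one-sided and two-sided cases in the same way. The counterexample for large subsemigroups is the same $T^0$ construction as in the paper.
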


The reasoning proving this essentially parallels the reasoning for
\fullref{Theorem}{thm:minleftideal}.

The next property we consider is the property of having finitely many
left ideals (respectively, ideals), which is equivalent to having
finitely many $\gL$-classes (respectively, $\gJ$-classes):

\begin{theorem}[{\cite[Theorem~10.4]{ruskuc_largesubsemigroups}}]
\label{thm:finnumleftideals}
The property of having finitely many left ideals is inherited by
small extensions and by large subsemigroups.
\end{theorem}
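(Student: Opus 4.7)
The plan is to recast the problem in terms of $\gL$-classes: a semigroup has finitely many left ideals if and only if it has finitely many $\gL$-classes, since every left ideal is a union of principal left ideals, principal left ideals correspond bijectively to $\gL$-classes, and $n$ classes yield at most $2^n$ unions. Both directions of the theorem thus become counting questions about $\gL$-classes.

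For small extensions, if $T$ is a large subsemigroup of $S$ with finitely many $\gL^T$-classes, I would observe that whenever $a, b \in T$ satisfy $a \gL^T b$, the witnessing elements $u, v \in T^1$ with $a = ub$, $b = va$ also lie in $S^1$, so $a \gL^S b$. Hence $\gL^T$ refines $\gL^S|_T$, forcing at most finitely many $\gL^S$-classes to meet $T$, while the remaining $\gL^S$-classes in $S - T$ number at most $|S - T|$.

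For large subsemigroups, suppose $S$ has finitely many $\gL^S$-classes and fix one such class $L$. The key computation is
\[
S^1 a \cap T = T^1 a \cup \bigl((S - T) a \cap T\bigr)
\]
for any $a \in L \cap T$, yielding $|(S^1 a \cap T) \setminus T^1 a| \leq |S - T|$. Because $S^1 a$ is constant on $L$, the set $M = S^1 a \cap T$ is also constant, and every principal left ideal $T^1 a$ (for $a \in L \cap T$) is a cofinite subset of $M$ with complement of size at most $|S - T|$. Since $\gL^T \subseteq \gL^S|_T$ forces each $\gL^T$-class to lie inside a single $L \cap T$, the $\gL^T$-classes contained in $L$ correspond bijectively to the distinct sets $T^1 a$ for $a \in L \cap T$.

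To bound this family I would run a poset argument. Any strict descending chain $T^1 a_1 \supsetneq T^1 a_2 \supsetneq \cdots$ produces a strict ascending chain of complements in $M$ whose sizes lie in $\{0, 1, \ldots, |S - T|\}$, capping chain length at $|S - T| + 1$. For an antichain $T^1 a_1, \ldots, T^1 a_m$, the standard equivalence $T^1 a_i \subseteq T^1 a_j \iff a_i \in T^1 a_j$ (applied via $a_i \in T^1 a_i$) forces $a_i \notin T^1 a_j$ for $i \neq j$, so fixing $j$ places $\{a_i : i \neq j\} \subseteq M \setminus T^1 a_j$, giving $m \leq |S - T| + 1$. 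Dilworth's theorem then bounds the family's size by $(|S - T| + 1)^2$, and summing over the finitely many $\gL^S$-classes yields finitely many $\gL^T$-classes. The main obstacle will be recognising this combinatorial structure: an element-wise pigeonhole is hampered by the asymmetry between ``$b \in T^1 a$'' (controlled by the bound on $(S-T)a \cap T$) and ``$a \in T^1 b$'' (not directly so), and passing to ideal-inclusion absorbs this asymmetry so that a single quantity $|S - T|$ simultaneously controls chain length and antichain width.
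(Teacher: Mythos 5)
Your proof is correct, and for the harder (large subsemigroups) direction it takes a genuinely different route from the one the paper sketches. Both arguments begin the same way: reduce to counting $\gL$-classes, note that $\gL^T$ refines $\gL^S|_T$, and localize to a single $\gL^S$-class $L$. But the paper (following Ru\v{s}kuc) then argues by contradiction, extracting an infinite sequence of pairwise non-$\gL^T$-related elements of $L$ and deriving a contradiction through a pigeonhole analysis of products; you instead give a direct, quantitative bound. Your key observation --- that $M = S^1a \cap T$ is a single set independent of $a \in L \cap T$ and that each $T^1a$ sits inside $M$ with complement of size at most $|S-T|$ --- cleanly isolates why the complement $S-T$ controls everything, and the chain bound (via complement sizes) together with the antichain bound (via $T^1a_i \subseteq T^1a_j \iff a_i \in T^1a_j$) yields the explicit estimate of at most $(|S-T|+1)^2$ principal left $T$-ideals per $\gL^S$-class. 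This buys an effective bound on the number of $\gL^T$-classes in terms of $|S-T|$ and the number of $\gL^S$-classes, which the contradiction argument does not provide. One small refinement: since the family $\{T^1a : a \in L\cap T\}$ is not known to be finite in advance, invoking Dilworth's theorem is slightly delicate (its infinite version needs a compactness argument); it is cleaner to use the height function (Mirsky's decomposition), which partitions any poset with chains of length at most $h$ into at most $h$ antichains and immediately gives the same bound $hw$. This is a presentational point only --- both your chain and antichain bounds are correctly established, and the conclusion stands.
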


The proof for small extensions is quite short. Let $T$ be a semigroup
with finitely left many ideals and let $S$ be a small extension of
$T$. So there are finitely many $\gL^T$-classes. Every $\gL^S$-class
is a $\gL^T$-class plus some elements of $S-T$, and so there are
finitely many $\gL^S$-classes and hence finitely many left ideals of
$S$.

The proof for large subsemigroups is longer and proceeds by
contradiction: suppose that $S$ contains finitely many left ideals and
$T$ contains infinitely many left ideals. Then some $\gL^S$-class
contains infinitely many $\gL^T$-classes. By means of a detailed
argument using products inside this $\gL^S$-class, one ultimately
derives a contradiction.

\begin{theorem}[{\cite[\S~11]{ruskuc_largesubsemigroups} \& \cite[Theorem~5.1]{gray_ideals}}]
The property of having finitely many ideals is inherited by
small extensions and by large subsemigroups.
\end{theorem}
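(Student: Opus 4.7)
The plan is to handle the two directions separately: small extensions by a direct intersection-and-counting argument, and large subsemigroups by contradiction via Ramsey-type extractions paralleling the proof of \fullref{Theorem}{thm:finnumleftideals}.

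For the small extensions direction, let $T$ have finitely many ideals and let $S$ be a small extension. The key observation is that for any ideal $I$ of $S$, the intersection $I \cap T$ is either empty or an ideal of $T$: for $i \in I \cap T$ and $t \in T$, the products $ti$ and $it$ lie in $I$ (since $I$ is an $S$-ideal) and in $T$ (since $T$ is a subsemigroup), so $ti, it \in I \cap T$. The map $I \mapsto (I \cap T,\, I \cap (S - T))$ is then injective, as $I$ is recovered as the union of its two components. Since $T$ has finitely many ideals and $S - T$ is finite, the number of such pairs, and hence the number of ideals of $S$, is finite.

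For the large subsemigroups direction, I would first translate the property: having finitely many ideals is equivalent to having finitely many $\gJ$-classes, via the bijection between $\gJ$-classes and principal ideals together with the fact that every ideal is a union of principal ideals. So the task becomes: if $S$ has finitely many $\gJ^S$-classes, then $T$ has finitely many $\gJ^T$-classes. Since each $\gJ^T$-class lies in a unique $\gJ^S$-class, it suffices to show that every $\gJ^S$-class $K$ contains only finitely many $\gJ^T$-classes from $K \cap T$. Assume for contradiction otherwise, and pick representatives $x_1, x_2, \ldots \in K \cap T$ from infinitely many distinct $\gJ^T$-classes; set $F = S - T$. For each $i \neq j$, $x_i \gJ^S x_j$ but $x_i$ and $x_j$ lie in distinct $\gJ^T$-classes, so at least one of $x_i \notin T^1 x_j T^1$ or $x_j \notin T^1 x_i T^1$ holds. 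Applying the infinite Ramsey theorem, thin to an infinite subsequence on which, without loss of generality, $x_i \notin T^1 x_j T^1$ for all $i < j$; then every $S^1$-representation $x_i = u x_j v$ has $u \in F$ or $v \in F$. A further Ramsey extraction and pigeon-hole on the finite set $F$ yields a uniform representation $x_i = u_0 x_j v_{ij}$ with $u_0 \in F$ fixed, and with the $v_{ij}$ either uniformly in $T^1$ or uniformly equal to some $v_0 \in F$.

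Iterating along this thinned subsequence produces $x_{i_1} = u_0^k x_{i_{k+1}} w_k$, where $w_k \in T^1$ in the first case and $w_k = v_0^k$ in the second. The main obstacle, and the heart of the argument, is to combine these with the finiteness of $F$ to force a contradiction. If some power $u_0^k$ (and correspondingly some $w_k$) lies in $T^1$, one immediately obtains $x_{i_1} \in T^1 x_{i_{k+1}} T^1$, contradicting the Ramsey extraction; otherwise, the powers $u_0^k$ cycle inside the finite set $F$, and closing the argument requires a further, more delicate analysis balancing the left and right multiplicative actions by elements of $F$, as carried out in \cite[Theorem~5.1]{gray_ideals}.
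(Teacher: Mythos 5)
Your small-extensions argument is correct and complete: the observation that $I \cap T$ is an ideal of $T$ whenever it is non-empty, combined with the injectivity of $I \mapsto (I \cap T,\, I \cap (S-T))$ and the finiteness of $S-T$, gives the bound at once. This is a slightly different (and arguably cleaner) route than the paper's, which instead translates the property into having finitely many $\gJ$-classes and counts those, exactly as in the small-extensions half of \fullref{Theorem}{thm:finnumleftideals}; your version has the minor advantage of not needing that translation for this half.

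For large subsemigroups, however, your argument is not complete, and the gap sits exactly at the crux. The reduction to $\gJ$-classes, the fact that $\gJ^T \subseteq \gJ^S$ on $T$, and the Ramsey extraction producing an infinite sequence with $x_i \notin T^1 x_j T^1$ for $i<j$ together with a uniform factor $u_0 \in S-T$ are all sound, and they do mirror the strategy of the cited result. But the two-sided case does not close the way the one-sided case does: for $\gL$ a single pigeonhole on the finite set $S-T$ finishes the job, whereas here, after iterating to $x_{i_1} = u_0^k x_{i_{k+1}} w_k$, the subcase in which no power of $u_0$ (and, in your second alternative, no power of $v_0$) ever lands in $T$ --- so that the powers merely cycle inside the finite complement --- is precisely where the real work of \cite[Theorem~5.1]{gray_ideals} happens, and you explicitly defer it. To be fair, the survey itself offers no more than the citation and a one-line description for this direction, so you have matched its level of detail; but as a standalone argument your large-subsemigroups half is a correct framework plus an acknowledged missing step, not a proof.
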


[The property of having finitely many ideals is actually inherited by
finite Green index extensions and subsemigroups
\cite[Theorem~5.1]{gray_ideals}.]

Using $\gJ$ instead of $\gL$, one can follow the proof of the small
extensions part of \fullref{Theorem}{thm:finnumleftideals} to show
that the property of having finitely many ideals is inherited by small
extensions \cite[\S~11]{ruskuc_largesubsemigroups}.

The proof of the large subsemigroups part follows from the
corresponding result for finite Green index subsemigroups, which
basically uses Ramsey's Theorem, and does not contain many
tricks. Another use of Ramsey's Theorem, which requires some more
work, gives the following theorems about $\min_{\gL}$ and
$\min_{\gJ}$:

\begin{theorem}[{\cite[Theorems~6.1 \& 6.4]{gray_ideals}}]
The property $\min_{\gL}$ is inherited by small extensions and by
large subsemigroups.
\end{theorem}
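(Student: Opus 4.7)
The plan is to follow the Ramsey-theoretic template used in the paper for analogous results such as the inheritance of having finitely many ideals. In both directions I argue by contradiction: I assume an infinite strictly descending $\gL$-chain in one of the two semigroups and colour each ordered pair of indices by a finite palette capturing how the later element is obtained from the earlier one. Ramsey's theorem then extracts an infinite monochromatic subset on which the colour data collapses into something incompatible with the hypotheses. The tool underlying both colourings is the observation that, for $u,v \in T$ with $T$ a finite-Rees-index subsemigroup of $S$, the discrepancy between the relations $T^1 u \subseteq T^1 v$ and $S^1 u \subseteq S^1 v$ is controlled by the finite set $(S \setminus T) \cup \{1\}$.

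For the small-extension direction, let $T$ satisfy $\min_{\gL}$ and let $S$ be a small extension. Suppose for contradiction $L^S_{x_1} > L^S_{x_2} > \cdots$ strictly; the pigeonhole principle lets me pass to a subsequence with all $x_i \in T$. For each $i < j$, fix $s_{ij} \in S^1$ with $x_j = s_{ij} x_i$, and colour the pair ``T'' if some such $s_{ij}$ can be chosen in $T^1$, otherwise by a specific $s_{ij} \in S \setminus T$ (the choice $s_{ij}=1$ is excluded since $L^S_{x_i} \neq L^S_{x_j}$). Ramsey produces an infinite monochromatic subset $N$. In the ``T'' case, $T^1 x_j \subseteq T^1 x_i$ for all $i < j$ in $N$, with strict inclusion (equality would force $S^1 x_i = S^1 x_j$, contradicting strict $\gL^S$-descent), giving an infinite strictly descending $\gL^T$-chain and contradicting $\min_{\gL}$ in $T$. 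In the case of colour $s \in S \setminus T$, any three indices $i_1 < i_2 < i_3$ in $N$ yield $x_{i_2} = s x_{i_1} = x_{i_3}$, contradicting the distinctness of the $\gL^S$-classes.

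For the large-subsemigroup direction, let $S$ satisfy $\min_{\gL}$ and let $T$ be a large subsemigroup. Suppose for contradiction $L^T_{x_1} > L^T_{x_2} > \cdots$ strictly in $T$. The induced $\gL^S$-chain is weakly descending, so it stabilises by $\min_{\gL}$ in $S$; after passing to a tail I may assume every $x_i$ lies in a single $\gL^S$-class. For each $i < j$ choose $s_{ij} \in S^1$ with $x_i = s_{ij} x_j$; the possibility $s_{ij} \in T^1$ would give $T^1 x_i \subseteq T^1 x_j$, contradicting strict descent in $\gL^T$, so $s_{ij}$ must lie in the finite set $S \setminus T$. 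Ramsey produces an infinite monochromatic $N$ with colour $s \in S \setminus T$ such that $x_i = s x_j$ for all $i < j$ in $N$; choosing $i_1 < i_2 < i_3$ in $N$ then gives $x_{i_1} = s x_{i_3} = x_{i_2}$, contradicting the distinctness of the $\gL^T$-classes. The main subtlety in both directions is the comparison between $\gL^T$ and $\gL^S$ behaviour that justifies the colouring; once this is in place, the Ramsey step is mechanical, and I do not expect any serious obstacle beyond being careful that the stabilisation and pigeonhole reductions leave us with an infinite subsequence of the original chain.
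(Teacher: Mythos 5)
Your argument is correct, and it is exactly the Ramsey-theoretic proof that the paper gestures at (it defers to the finite Green index versions in the cited reference, remarking only that they use Ramsey's Theorem): colour pairs from a putative infinite strictly descending chain by elements of the finite complement together with a ``witness in $T$'' colour, extract an infinite monochromatic set, and derive either a strictly descending chain in the other semigroup or a collision $x_{i_2}=x_{i_3}$ contradicting strictness. Both directions check out, including the stabilisation of the induced $\gL^S$-chain and the exclusion of the witness $1$, so there is nothing further to compare.
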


\begin{theorem}[{\cite[Theorems~6.1 \& 6.4]{gray_ideals}}]
The property $\min_{\gJ}$ is inherited by small extensions and by
large subsemigroups.
\end{theorem}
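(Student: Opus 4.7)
The plan is to mirror the argument for $\min_{\gL}$, replacing principal left ideals by principal two-sided ideals: write $J^U_x$ for $U^1 x U^1$, and recall that $\min_{\gJ}$ is precisely the descending chain condition on the sets $J^U_x$. Both directions proceed by contradiction and both rest on Ramsey's Theorem applied to a colouring that records, for each pair of chain elements, the structural type of a witnessing factorisation.

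For the small-extension direction, suppose $T$ satisfies $\min_{\gJ}$ and $S$ is a small extension of $T$, yet $S$ carries an infinite strictly descending chain $J^S_{x_1} \supsetneq J^S_{x_2} \supsetneq \cdots$. Finiteness of $S - T$ lets us pass to a subsequence on which every $x_i \in T$. For each $i < j$ fix a decomposition $x_j = u_{ij} x_i v_{ij}$ with $u_{ij}, v_{ij} \in S^1$, and colour the pair $\{i,j\}$ by the ordered pair whose two coordinates are each either the generic label ``$T^1$'' or a specific element of the finite set $S - T$. Ramsey's Theorem supplies an infinite monochromatic subset $I$. If the common colour is $(T^1, T^1)$ then $x_j \in T^1 x_i T^1$ throughout $I$; the strict descent $J^S_{x_i} \supsetneq J^S_{x_j}$ forces $x_i \notin J^T_{x_j}$, so the restriction of the chain to $I$ is strictly descending in $T$ and contradicts $\min_{\gJ}$ in $T$. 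If instead a coordinate is a fixed element of $S - T$, choosing $i < j < k$ in $I$ and combining $x_k = u_{jk} u_{ij} x_i v_{ij} v_{jk}$ with the direct decomposition $x_k = u_{ik} x_i v_{ik}$ produces an identity from which, by iterating along $I$ and exploiting the fixed $S - T$ coordinate, one extracts the desired contradiction.

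The large-subsemigroup direction is dual. Assume $S$ satisfies $\min_{\gJ}$ and $T$ is large in $S$, but $T$ carries an infinite strictly descending chain $J^T_{y_1} \supsetneq J^T_{y_2} \supsetneq \cdots$. Then $J^S_{y_1} \supseteq J^S_{y_2} \supseteq \cdots$ must stabilise, say from $y_n$, so that $y_n = u_m y_m v_m$ with $u_m, v_m \in S^1$ for every $m > n$. Applying the same Ramsey colouring to the indices $m > n$ yields an infinite monochromatic subset on which, in the $(T^1, T^1)$ case, $y_n$ lies in $T^1 y_m T^1$ for arbitrarily large $m$, contradicting $J^T_{y_m} \subsetneq J^T_{y_n}$; otherwise the triple-substitution argument of the previous paragraph forces a collapse of the chain in $T$. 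The hardest step, and the source of the ``some more work'' signalled by the paper, is the mixed case where exactly one of $u_{ij}, v_{ij}$ is a fixed element of $S - T$ while the other lies in $T^1$: the substitutions accumulate powers of the fixed $S - T$ element on one side while varying $T^1$-factors appear on the other, and careful bookkeeping — possibly refining the colouring or passing from triples to longer tuples of indices in $I$ — is needed to force a genuine contradiction with the strict descent.
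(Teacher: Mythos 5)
Your overall strategy---Ramsey's Theorem applied to a colouring of pairs of chain indices by the ``type'' of a witnessing factorisation---is the right one, and it matches the one-line description the survey gives (the survey itself offers no proof of this theorem, only the citation to \cite{gray_ideals} and the remark that the argument uses Ramsey's Theorem and ``requires some more work''). The all-$T^1$ colour is handled correctly in both directions: there the descent transfers to $T$ (respectively lifts to $S$) and strictness follows because $J^T_x\subseteq J^S_x$. The gaps are concentrated exactly where the real difficulty lies.

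First, in the colours involving $S-T$ the identity you propose to exploit, $u_{jk}u_{ij}x_iv_{ij}v_{jk}=u_{ik}x_iv_{ik}$, carries no contradiction: it merely records two factorisations of the same element. The move that actually works when \emph{both} coordinates are fixed elements $s,s'\in S-T$ is to compare the factorisations of two \emph{distinct} chain elements over a \emph{common} third one: from the pairs $(i,j)$ and $(i,k)$ one gets $x_j=sx_is'$ and $x_k=sx_is'$, hence $x_j=x_k$, contradicting strict descent. (Dually, in the large-subsemigroup direction you need factorisations $y_i=u_{ij}y_jv_{ij}$ for \emph{all} pairs $i<j$ once the $\gJ^S$-chain stabilises; with your setup, which only factorises the single element $y_n$ over each $y_m$, the equation $y_n=sy_ms'=sy_{m'}s'$ yields nothing.) Second, and more seriously, the mixed colour---one coordinate a fixed element of $S-T$, the other ranging over $T^1$---is precisely where this collapse trick fails: $x_j=sx_iv_{ij}$ and $x_k=sx_iv_{ik}$ with possibly different $v_{ij},v_{ik}\in T^1$ give no contradiction, and you explicitly leave this case open, gesturing at ``careful bookkeeping'' or a refined colouring. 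This is not a detail to be deferred: it is the entire content of the $\min_{\gJ}$ case beyond $\min_{\gL}$ and $\min_{\gR}$ (where the factorisation is one-sided, so no mixed colour can occur), and it is the ``some more work'' the survey alludes to. As it stands the argument establishes $\min_{\gL}$ and $\min_{\gR}$ but not $\min_{\gJ}$; you need an actual analysis of how the fixed element of $S-T$ interacts with the varying $T^1$-factors (for instance tracking whether the intermediate products $sx_i$ land in $T$ or in $S-T$, and what structure that imposes), not merely a finer colouring.
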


[The properties $\min_{\gL}$ and $\min_{\gJ}$ are actually inherited
  by finite Green index extensions and subsemigroups
  \cite[Theorems~6.1 \&~6.4]{gray_ideals}.]

There are two interesting finiteness conditions whose interaction with
subsemigroups or extensions of finite Rees/Green index is less
straightforward, but which are very important in semigroup theory:
stability and the property $\gJ=\gD$. The story with $\gJ=\gD$
is particularly interesting, but let us first describe the situation
for stability.

\begin{definition}
\label{def:stable}
A $\gJ$-class $J$ of a semigroup $S$ is said to be \defterm{right
stable} if it satisfies one (and hence all) of the following
equivalent conditions \cite[Proposition 3.7]{lallement_semigroups}:
\begin{enumerate}
\item The set of all $\gR$-classes in $J$ has a minimal element.
\item There exists $q\in J$ satisfying the following property: $q\gJ
  qx$ if and only if $q\gR qx$ for all $x\in S$.
\item Every $q\in J$ satisfies the property stated in (ii).
\item Every $\gR$-class in $J$ is minimal in the set of $\gR$-classes in $J$.
\end{enumerate}
The semigroup $S$ is \defterm{right stable} if every $\gJ$-class of $S$
is right stable. The notion of left stability is defined dually. A
$\gJ$-class or a semigroup are said to be (\defterm{two-sided})
\defterm{stable} if they are both left and right stable.
\end{definition}

Roughly speaking, this definition means that if in the set of all
$\gR$-classes, contained within a fixed $\gJ$-class, there is a
minimal element, then all of these $\gR$-classes are minimal and so
pairwise incomparable. Thus, a $\gJ$-class $J$ is right stable if and
only if all the $\gR$-classes contained in $J$ are pairwise
incomparable.

It is clear that stability is a finiteness condition, and that
$\min_R$ implies right stability. Also note that stability implies the
finiteness condition $\gJ=\gD$, see~\cite{lallement_semigroups}. A
convenient way to characterize stability in algebraic term is the
following:

\begin{proposition}[{\cite[Proposition 3.10]{lallement_semigroups}}]
Let $S$ be a semigroup. Then $S$ is right stable if and only if
$R_a\leq R_{ba}$ implies $R_a=R_{ba}$ for all $a,b\in S$.
\end{proposition}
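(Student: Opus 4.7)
The plan is to prove both directions using characterisation~(iv) of right stability, namely that every $\gR$-class contained in a fixed $\gJ$-class is minimal in the $\gR$-order (so that two such $\gR$-classes that happen to be $\leq$-comparable must coincide).

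For the forward direction, I would assume $S$ is right stable and that $R_a \leq R_{ba}$. The inequality $R_a \leq R_{ba}$ yields $a = bas$ for some $s \in S^1$, hence $a \in S^1 b a S^1$ and so $J_a \leq J_{ba}$; combined with the automatic inequality $J_{ba} \leq J_a$ coming from $ba \in S^1 a$, this gives $a \gJ ba$. Thus $R_a$ and $R_{ba}$ are both $\gR$-classes of the single $\gJ$-class $J_a$, and~(iv) forces minimality of each; the comparability $R_a \leq R_{ba}$ therefore collapses to $R_a = R_{ba}$.

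For the converse I would assume the stated implication and verify~(iv) for an arbitrary $\gJ$-class $J$: given $x, y \in J$ with $R_x \leq R_y$, I must deduce $R_x = R_y$. From $x \gJ y$ I write $y = uxv$ with $u,v \in S^1$, and from $R_x \leq R_y$ I write $x = yt$ with $t \in S^1$; substituting gives $x = uxvt \in uxS^1$, so $R_x \leq R_{ux}$. Applying the hypothesis with $a = x$ and $b = u$ then produces $R_x = R_{ux}$, i.e.\ $xS^1 = uxS^1$, whence $y = uxv \in uxS^1 = xS^1$. This yields $R_y \leq R_x$, and together with the standing $R_x \leq R_y$ gives $R_x = R_y$, as required.

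The only real obstacle is identifying the correct application of the hypothesis in the reverse direction: the choice $b = u$ drawn from the $\gJ$-witness $y = uxv$ is what makes everything work, since the very same witness simultaneously arranges the premise $R_x \leq R_{ux}$ (via the substitution $x = uxvt$) and places $y$ inside $uxS^1$, so that the conclusion $R_x = R_{ux}$ can be transferred into the desired membership $y \in xS^1$. Once this choice is spotted, both implications reduce to bookkeeping with principal right ideals.
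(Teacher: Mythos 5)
Your argument is correct. Note that the paper itself offers no proof of this proposition---it is quoted verbatim from Lallement's book---so there is nothing to compare against; your write-up is a self-contained elementary verification of the cited fact. Both directions check out: the forward direction correctly places $R_a$ and $R_{ba}$ in a common $\gJ$-class (via $a=bas$ giving $J_a\leq J_{ba}$ and $ba\in S^1a$ giving the reverse) and then invokes characterisation~(iv); the converse correctly extracts $R_x\leq R_{ux}$ from the two witnesses $y=uxv$ and $x=yt$, applies the hypothesis with $b=u$, and transfers $xS^1=uxS^1$ into $y\in xS^1$. The only point worth tidying is that $u$ is drawn from $S^1$ while the hypothesis quantifies over $b\in S$: when $u$ is the adjoined identity the conclusion $R_x=R_{ux}$ is trivial (indeed $y=xv$ gives $R_y\leq R_x$ directly), so the argument survives, but a sentence disposing of that degenerate case would make the proof airtight.
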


Using this characterization one can prove the following result

\begin{theorem}[{\cite[Theorem~3.2]{gray_ideals}}]
Right-, left-, and two-sided stability are inherited by small
extensions and large subsemigroups.
\end{theorem}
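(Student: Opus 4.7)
My plan is to use the algebraic characterization given in the preceding proposition: $S$ is right stable iff, for every $a, b \in S$, $a \in baS^1$ implies $ba \in aS^1$. Because left stability follows from right stability by passage to the reverse semigroup $S^{\mathrm{rev}}$ (whose large sub-/small super-semigroup is $T^{\mathrm{rev}}$), and two-sided stability is the conjunction of the two one-sided versions, it suffices to treat the right-stable case in both directions.

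\textbf{Inheritance by small extensions.} Suppose $T$ is right stable and $|S \setminus T| < \infty$. Given $a, b \in S$ with $a = bax$ for some $x \in S^1$, iteration yields $a = b^n a x^n$ for all $n \geq 0$. Since each of the monogenic subsemigroups $\langle a \rangle$, $\langle b \rangle$, $\langle x \rangle$ has at most finitely many elements outside $T$, each is eventually periodic, and a simultaneous pigeon-hole argument produces integers $n \geq 1$ and $k \geq 0$ such that $a' := b^k a x^k = a$ lies in $T$ and $b^n, x^n \in T$. The equation $a' = b^n a' x^n$ inside $T$ is precisely $R_{a'}^T \leq R_{b^n a'}^T$, and right stability of $T$ gives $b^n a' \in a' T^1 \subseteq aS^1$. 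Because the iteration $a = b^k a x^k$ forces $R_a^S = R_{b^k a}^S$ for all $k \geq 0$, one deduces $R_{ba}^S = R_a^S$, i.e.\ $ba \in aS^1$, establishing right stability of $S$.

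\textbf{Inheritance by large subsemigroups.} Suppose $S$ is right stable and $|S \setminus T| < \infty$. Given $a, b \in T$ with $a = bat$ for some $t \in T^1$, right stability of $S$ yields $s \in S^1$ with $ba = as$. If $s \in T^1$ we are done. Otherwise $s \in S \setminus T$; substituting $ba = as$ into $a = bat$ gives $a = ast$, whence $a = a(st)^n$ for all $n \geq 0$, and therefore $ba = a \cdot (st)^n s$ for all $n$. A separate induction, using $b \cdot as^n = (ba) s^n = a s^{n+1}$, shows that $b^n a = a s^n$ for every $n \geq 1$, so each $as^n$ lies in $T$. Finiteness of $S \setminus T$ and the eventual periodicity of the monogenic subsemigroup $\langle s \rangle \subseteq S$ then allow a pigeon-hole argument producing some power $s^N \in T^1$ (or, failing that, some $(st)^n s \in T^1$ via the parallel sequence above), which supplies the desired $T^1$-witness for $ba \in aT^1$ and hence $R_a^T = R_{ba}^T$.

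\textbf{Main obstacle.} The non-trivial technical step in both directions is the combinatorial descent: translating a witness of $a \in baS^1$ between $S$ and $T$ requires exploiting the finiteness of $S \setminus T$ together with the eventual periodicity of the relevant monogenic subsemigroups. The large subsemigroup direction is the more delicate one, since the $S^1$-witness $s$ for $ba = as$ produced by right stability of $S$ need not initially lie in $T^1$, and replacing it by a $T^1$-witness requires careful manipulation of the powers $s^n$ together with the right-stabilising identity $a = a(st)^n$. Once the appropriate witness is in place, the algebraic characterization of right stability closes the argument in each direction.
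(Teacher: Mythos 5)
Your overall framing is reasonable: reducing everything to right stability via the Lallement characterization ($R_a\leq R_{ba}$ implies $R_a=R_{ba}$), handling left stability by passing to the reversed semigroup, and iterating the hypothesis to $a=b^nax^n$ are sensible first moves, and the cited source does argue by this kind of symbolic manipulation with the finiteness of $S\setminus T$. However, both of your pigeon-hole steps rest on a false premise. A monogenic subsemigroup having only finitely many elements outside $T$ does \emph{not} make it eventually periodic: $\langle b\rangle$ may be infinite cyclic and lie entirely inside $T$. Worse, even though the \emph{set} $\langle s\rangle\cap(S\setminus T)$ is finite, the set of \emph{exponents} $n$ with $s^n\notin T$ can be infinite --- take $s$ idempotent with $s\in S\setminus T$, so that $s^n=s\notin T$ for every $n$. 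In the small-extension direction there is the additional, more basic problem that $b^kax^k$ equals $a$ for every $k$, so your element $a'$ is just $a$; if $a\in S\setminus T$ it never ``lies in $T$'' and stability of $T$ cannot be applied to it, and even when $a\in T$ you give no reason why a single exponent $n$ with both $b^n\in T$ and $x^n\in T$ should exist. (The final step, ``the iteration forces $R^S_a=R^S_{b^ka}$,'' also asserts essentially the conclusion; the repairable version is to sandwich $R^S_a\leq R^S_{ba}\leq\cdots\leq R^S_{b^na}\leq R^S_a$ once $b^na\in aS^1$ is known.)

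In the large-subsemigroup direction the crux is exactly the step you leave open: converting the witness $s\in S^1$ for $ba=as$ into a witness in $T^1$. Your computation $b^na=as^n$ correctly shows that the \emph{elements} $as^n$ lie in $T$, but that says nothing about the witnesses $s^n$ themselves; with $s$ an idempotent of $S\setminus T$ no power $s^N$ lies in $T^1$, and the parenthetical fallback ``some $(st)^ns\in T^1$'' is equally unjustified. So the promised pigeon-hole does not exist in general and the argument does not close. A correct proof has to extract the needed coincidence from the ascending chain $R_a\leq R_{ba}\leq R_{b^2a}\leq\cdots$ (note $b^na=b^{n+1}at$ with $t\in T^1$) and the finiteness of $S\setminus T$ applied to a suitable sequence of \emph{elements}, rather than by forcing a power of a fixed witness into $T$. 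As it stands, the central difficulty in each direction is correctly identified but not overcome.
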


[Right-, left-, and two-sided stability are actually inherited
  by finite Green index extensions and subsemigroups
  \cite[Theorem~3.2]{gray_ideals}.]

The proof is basically some clever symbolic manipulation, and is not
very difficult.

The story of the property $\gJ=\gD$ is more interesting. Originally,
it was intuitively clear that the following theorem must hold, but it
took some time to establish it:

\begin{theorem}[{\cite[Theorem~4.1 \& Example~4.6]{gray_ideals}}]
\label{thm:jeqdgreen}
The property $\gJ = \gD$ is inherited by small extensions, but not in
general by large subsemigroups.
\end{theorem}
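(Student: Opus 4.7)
The plan is to establish the positive direction by a pigeonhole reduction to the hypothesis $\gJ^T = \gD^T$, and the negative direction by constructing an explicit counterexample.

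For small extensions, let $T$ satisfy $\gJ^T = \gD^T$ and $S$ be a small extension. Given $a, b \in S$ with $a \gJ^S b$, I aim to produce $c$ with $a \gL^S c \gR^S b$. I would fix witnesses $x, y, u, v \in S^1$ with $b = xay$ and $a = ubv$, yielding the iterable identity $a = (ux)^n a (yv)^n$ for every $n \geq 0$. First I would dispose of the case where the $\gJ^S$-class $J$ of $a, b$ is contained in $S - T$: such a class is finite, and a finite $\gJ$-class is automatically a single $\gD$-class, so there is nothing to prove. Hence we may assume $J \cap T \neq \emptyset$ and, by applying pigeonhole to $(ux)^n a$ and $a(yv)^n$, reduce to the case $a, b \in T$.

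The central step is a bridging lemma: assuming $a, b \in T$ with $a \gJ^S b$, show $a \gD^S b$. The strategy is to apply pigeonhole on $S - T$ to the sequences $(ux)^n a$ and $a(yv)^n$, together with the identity $a = (ux)^n a (yv)^n$, in order to produce elements $a', b' \in T$ that are $\gD^S$-related to $a$ and $b$ respectively and satisfy $a' \gJ^T b'$. In effect, one rewrites the $S^1$-witnesses to the $\gJ^S$-relationship as $T^1$-witnesses whenever the connection would otherwise pass through $S - T$. The hypothesis then gives $a' \gD^T b'$, which inflates to $a' \gD^S b'$ since every $T^1$-witness to $\gL^T$ or $\gR^T$ is simultaneously an $S^1$-witness to $\gL^S$ or $\gR^S$; composing $\gD^S$-relations yields $a \gD^S b$. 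The main obstacle is executing the bridging cleanly: the pigeonhole manipulations must simultaneously eliminate $S - T$-letters from the $\gJ^S$-witnesses and preserve the $\gD^S$-links back to the original elements $a, b$.

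For the negative direction, I would exhibit a semigroup $S$ with $\gJ^S = \gD^S$ containing a large subsemigroup $T$ in which some pair $a, b$ is $\gJ^T$-related but not $\gD^T$-related. The idea is to engineer $T$ so that $a$ and $b$ generate the same two-sided ideal in $T$ but no element of $T$ is simultaneously $\gL^T$-equivalent to one and $\gR^T$-equivalent to the other, and then to adjoin finitely many bridge elements providing the missing $\gD^S$-witness. A natural template is a semigroup whose $\gJ^T$-class structure is strictly finer than its $\gD^T$-class structure (for instance, a Rees-matrix-type construction with carefully chosen sandwich data), extended by a small set of elements that collapse the $\gD^T$-classes within a common $\gJ^T$-class. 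The delicate part is verifying that these bridges neither introduce new $\gJ^S$-relations nor create further failures of $\gJ^S = \gD^S$ elsewhere in $S$.
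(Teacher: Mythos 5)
Your proposal is a plan rather than a proof: both halves stop exactly where the real work begins. For the positive direction, the entire content is the ``bridging lemma'' --- converting $S^1$-witnesses of $a \gJ^S b$ into data yielding $a \gD^S b$ via pigeonhole on the finite set $S-T$ --- and you do not prove it; you yourself flag that ``executing the bridging cleanly'' is the main obstacle. The preliminary reduction is also shakier than you suggest: it is not clear that pigeonhole applied to $(ux)^n a$ and $a(yv)^n$ lets you assume $a,b\in T$, since these sequences may be constant (e.g.\ if $ux$ fixes $a$), so the mixed case where one of $a,b$ lies in $S-T$ is not actually disposed of. (Your finite-$\gJ$-class observation itself is fine: a finite $\gJ$-class is stable in the sense of Definition~\ref{def:stable} and hence a single $\gD$-class.) Note that the survey does not reprove this direction either; it quotes it from \cite[Theorem~4.1]{gray_ideals}, where it is established for the more general finite Green index.

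For the negative direction you have not produced a counterexample at all: ``I would exhibit a semigroup\dots'' followed by a template (a Rees-matrix-type $T$ with $\gJ^T\neq\gD^T$ plus finitely many adjoined ``bridge'' elements) is not a construction, and the verification you defer --- that the adjoined elements repair $\gJ^S=\gD^S$ globally without introducing new $\gJ^S$-relations or new failures elsewhere --- is precisely where all the difficulty lies. The paper's proof of this half is an explicit finitely presented semigroup (Example~\ref{ex:jseqjdjtnewdt}, with $T=S-\{f\}$ of Rees index $2$), whose verification in Proposition~\ref{prop:jseqjdjtnewdt} requires setting up a confluent, terminating rewriting system, working with normal forms, and running a lengthy case analysis to show $\gJ^S=\gD^S$ while $(a,ab)\in\gJ^T\setminus\gD^T$. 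Until you write down a concrete $S$ and $T$ and carry out an analogous verification, this half of the statement remains unproved.
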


[The property $\gJ = \gD$ is actually inherited by finite Green index
  extensions \cite[Theorem~4.1]{gray_ideals}.]

The small extensions part of the result follows from
\fullref{Example}{ex:jseqjdjtnewdt} below. This is not the first
counterexample exhibited \cite[Example~6.4]{gray_ideals}. Rather, this
is an improved example that appears here for the first time, showing
that the property $\gJ = \gD$ is not inherited by a subsemigroup whose
complement contains only a single element. Only the statement of the
example is given here; the proof is long and so is given in
\fullref{Proposition}{prop:jseqjdjtnewdt} in the Appendix.

\begin{example}
\label{ex:jseqjdjtnewdt}
Let $S$ be the semigroup presented by:
\begin{align*}
S=\mathrm{Sg}\bigl\langle\, a,b,c,d,e,f,x,y :{}& cabd=a, eca=a, abf^2=ab,\\
& abf=xa, yabf=a, f^3=f,\\
& ec=ce, ex=xe, ey=ye,\\
& cx=xc, cy=yc, xy=yx \bigr\rangle.
\end{align*}
Then $T=S - \{f\}$ is a large subsemigroup of $S$, and $\gJ=\gD$ in
$S$ but $\gJ\neq\gD$ in $T$ by
\fullref{Proposition}{prop:jseqjdjtnewdt}.
\end{example}

The story of the property $\gJ=\gD$ is not finished! If at least one
of the subsemigroup and the original semigroup is regular, $\gJ=\gD$
\emph{is} inherited by large subsemigroups:

\begin{theorem}[{\cite[Theorems~4.7 \& 4.8]{gray_ideals}}]
Let $S$ be a semigroup and let $T$ be a large subsemigroup of
$S$. Suppose at least one of $S$ and $T$ is regular. Then if $\gJ=\gD$
in $S$, then $\gJ=\gD$ in $T$.
\end{theorem}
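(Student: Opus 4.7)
The plan is to lift the hypothesis $\gJ^S = \gD^S$ into $T$, using regularity as the tool for converting $\gR^S,\gL^S$-witnesses in $S$ into $\gR^T,\gL^T$-witnesses inside $T$.

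Let $a,b \in T$ with $a \gJ^T b$. Since the witnessing products live in $T^1 \subseteq S^1$, we immediately have $a \gJ^S b$; so by hypothesis $a \gD^S b$, and there exists $c \in S$ with $a \gR^S c \gL^S b$. I would treat the case ``$T$ regular'' first. By regularity of $T$, choose idempotents $e,f \in T$ with $e \gR^T a$ and $f \gL^T b$; since $\gR^T \subseteq \gR^S$ and $\gL^T \subseteq \gL^S$, we retain $e \gR^S c \gL^S f$, and idempotency forces $c = ec = cf = ecf$. It now suffices to produce some $c' \in T$ with $e \gR^T c' \gL^T f$, since then $a \gR^T e \gR^T c' \gL^T f \gL^T b$ gives $a \gD^T b$ as required.

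A useful observation is that regularity of $T$ automatically promotes $S$-relations to $T$-relations once a candidate $c' \in T$ satisfying merely $e \gR^S c' \gL^S f$ is in hand. Indeed, $e \gR^S c'$ with $e$ idempotent forces $c' = ec' \in eT \subseteq eT^1$; and from $e = c's$ for some $s \in S^1$ together with a regularity factorisation $c' = c'xc'$ ($x \in T$) one deduces $e = c'xc's = c'(xe) \in c'T \subseteq c'T^1$, giving $e \gR^T c'$. The symmetric computation with $f$ delivers $c' \gL^T f$. So the real problem is producing such a $c'$ inside the $\gH^S$-class $H = R^S_e \cap L^S_f$, which contains $c$ but which a priori might lie entirely in $S - T$.

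The main obstacle is exactly this step. If $H$ is infinite then $H \cap T \neq \emptyset$ by the finiteness of $S - T$. For finite $H$, one leverages Green's lemma, so $|H| = |H_e|$, together with the fact that the group $\gH$-class $H_e$ contains the idempotent $e$ of $T$, and shows that a product of the form $e u f$ with $u$ carefully chosen from $H_e$ lies in $H \cap T$; applying the promotion paragraph above to this $c' = euf$ closes the argument. Finally, when $S$ is regular but $T$ is not, one adapts the strategy: choose idempotent representatives $e, f \in S$ of the $\gD^S$-class of $a,b$ using regularity of $S$, and use finiteness of $S - T$ to translate them by suitable elements of the $\gR^S$- and $\gL^S$-classes of $a$ and $b$ into representatives that lie in $T$ and continue to witness $a \gR^S \cdot \gL^S b$; the promotion argument then goes through as before, completing the proof.
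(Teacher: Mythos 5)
The survey itself gives no proof of this theorem (it is quoted from [GMMR, Theorems~4.7 \& 4.8]), so I can only assess your argument on its own terms. Your reduction is sound and, in fact, forced: for $T$ regular, replacing $a,b$ by idempotents $e\gR^T a$, $f\gL^T b$ and proving the ``promotion'' lemma (any $c'\in T$ with $e\gR^S c'\gL^S f$ automatically satisfies $e\gR^T c'\gL^T f$, via $c'=ec'$, $e=c'(xe)$ with $c'=c'xc'$, $x\in T$) correctly shows that the whole theorem is equivalent to the claim that the $\gH^S$-class $H=R^S_e\cap L^S_f$ meets $T$. The infinite case of that claim is also fine.

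The finite case, however, is a genuine gap, and the sketch you give for it cannot be repaired in its current form because it never uses the hypothesis $a\gJ^T b$ --- it invokes only $e\gD^S f$, regularity, Green's lemma and $|S-T|<\infty$, and under those hypotheses alone the claim is \emph{false}. Take $S=B_2$, the five-element Brandt semigroup with elements $(i,j)$, $i,j\in\{1,2\}$, and zero, and $T=\{(1,1),(2,2),0\}$: then $T$ is a regular (indeed idempotent) large subsemigroup, $\gJ^S=\gD^S$ trivially, $e=(1,1)\gD^S(2,2)=f$, the class $H=R^S_e\cap L^S_f=\{(1,2)\}$ is finite with $e\in H_e\cap T$, and yet $H\cap T=\emptyset$ (consistently, $e$ and $f$ are not $\gJ^T$-related, since $T^1eT^1=\{e,0\}$). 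So the witnesses $\alpha,\beta,\gamma,\delta\in T^1$ of $a\gJ^T b$ must enter the argument somewhere, and they never do. Moreover the specific candidate $euf=uf$ ($u\in H_e$) need not lie in $H$ at all: by the Clifford--Miller location theorem, $uf\in R_u\cap L_f$ precisely when $R^S_f\cap L^S_e$ contains an idempotent, which there is no reason to expect; and choosing $u$ ``carefully'' cannot fix this, since the obstruction is independent of $u$. Finally, the case where $S$ (but not $T$) is regular is not really addressed: your promotion lemma explicitly uses a factorisation $c'=c'xc'$ with $x\in T$, so it does not ``go through as before'' there, and the proposed translation of idempotents of $S$ into $T$ is left entirely unspecified.
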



\section{Automata and semigroups}
\label{sec:automata}

\subsection{Word-hyperbolicity}
\label{sec:wordhyp}

Hyperbolic groups --- groups whose Cayley graphs are
hy\-per\-bol\-ic metric spaces --- have grown into one of the most
fruitful areas of group theory since the publication of Gromov's
seminal paper \cite{gromov_hyperbolic}. Duncan \& Gilman
\cite{duncan_hyperbolic} generalized the concept of hyperbolicity to
semigroups and monoids as follows: a semigroup is word-hyperbolic if
there is a pair $(L,M(L))$, where $L$ is a regular language over an
alphabet $A$ representing a finite generating set for $S$, such that
$L$ maps onto $S$ and the language
\[
M(L) = \{u\#_1v\#_2w^\rev : u,v,w \in L \land uv =_S w\}
\]
(where $\#_1$ and $\#_2$ are new symbols not in $A$ and $w^\rev$
denotes the reverse of the word $w$) is con\-text-free. In this case,
the pair $(L,M(L))$ is a \defterm{word-hy\-per\-bol\-ic structure}
for $S$. A semigroup is \defterm{word-hy\-per\-bol\-ic} if it admits a
word-hy\-per\-bol\-ic structure. For groups, this definition is
equivalent to the usual notion of hyperbolicity
\cite{gilman_hyperbolic}.

\begin{theorem}
\label{thm:wordhyplarge}
Word-hyperbolicity is inherited by large subsemigroups.
\end{theorem}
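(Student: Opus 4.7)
The plan is to transform a word-hyperbolic structure $(L_0, M(L_0))$ for $S$ over a finite generating alphabet $A$ into one for $T$ by pushing it through the rewriting framework of Campbell et al.\ described in \fullref{\S}{sec:fingen}. Let $D$, $C$, $\phi$, and $\psi$ be as constructed there, so that $D$ is a finite alphabet generating $T$ and $\phi : L(A,T) \to D^+$ is the associated rewriting map. The candidate word-hyperbolic structure for $T$ is $(L', M(L'))$ with $L' = (L_0 \cap L(A,T))\phi \subseteq D^+$.

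First I would argue that $\phi$ is a rational transduction. Since $S-T$ is finite, a nondeterministic finite-state transducer can maintain, as part of its state, the specific element of $\{\emptyword\} \cup (S - T)$ represented by the current un-emitted prefix; reading a letter $a$ from state $s$ it transitions using precomputed tables for $sa$ in $S$, and whenever $sa \in T$ it has located the next block $u_i a_i$ and emits either $d_{\rep{s}, a, \emptyword}$ (resetting to $\emptyword$ and continuing), or nondeterministically emits $d_{\rep{s}, a, \rep{u_{k+1}}}$ for a guessed $\rep{u_{k+1}} \in C$ and silently consumes the final remainder $u_{k+1}$ while verifying the guess with the same state-tracking. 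Consequently $\phi$ is rational.

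Next I would verify the regularity of $L'$ (and of $L := L_0 \cap L(A,T)$, needed below). Since $T$ is a large subsemigroup of $S$, both its left and right syntactic indices in $S$ are finite by \cite[Corollary~4.4]{ruskuc_syntactic}, so the intersection of the two one-sided syntactic congruences is a finite-index two-sided congruence respecting $T$; hence $T$ is recognisable in $S$ and $L(A,T) = \pi^{-1}(T)$ is regular, where $\pi : A^+ \to S$ is the evaluation homomorphism. Therefore $L$ is regular as an intersection of regular languages, and $L' = L\phi$ is regular as the image of a regular language under a rational transduction. Surjectivity of $L'$ onto $T$ is immediate: for $t \in T$ take any $w \in L_0$ with $w =_S t$; then $w \in L$, and $w\phi \in L'$ represents $t$.

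Finally I would verify that $M(L')$ is context-free. Set $N = M(L_0) \cap (L \#_1 L \#_2 L^\rev)$, which is context-free as the intersection of a context-free language with a regular one. Let $\Phi$ be the three-component rational transduction that preserves the separators $\#_1$ and $\#_2$ and applies $\phi$ to each block (reversing input and output for the third block). A direct check gives $M(L') = \Phi(N)$: the inclusion $\Phi(N) \subseteq M(L')$ uses that $\phi$ preserves the represented element of $T$, and the reverse inclusion lifts any $u, v, w \in L'$ to preimages $p, q, r \in L$ under $\phi$ and invokes $pq =_T r \iff pq =_S r$, which holds because $T$ is a subsemigroup of $S$. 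Since rational transductions preserve context-freeness, $M(L')$ is context-free. The main obstacle I anticipate is the careful coordination of the nondeterministic guess in the transducer for $\phi$---in particular the moment at which it declares the current block ``final'' and commits to a value of $\rep{u_{k+1}}$---together with the construction of its reverse-direction variant required for the third block of $\Phi$.
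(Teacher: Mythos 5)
Your proposal reaches the right conclusion but by a genuinely different route from the paper. The paper deduces \fullref{Theorem}{thm:wordhyplarge} from the stronger \fullref{Theorem}{thm:wordhypgreensub} on finite \emph{Green} index subsemigroups: it imports from \cite[\S~10]{cgr_greenindex} a ready-made \emph{synchronous} (length-preserving) rational relation $R \subseteq A^+ \times B^+$ translating representatives of elements of $T$ over $A$ into representatives over a generating alphabet for $T$, forms $K = L \circ R$, and checks $M(K) = M(L)\circ R'$, quoting closure of context-free languages under rational transductions. You instead stay inside the Rees-index world and build the translating transduction yourself from the rewriting map $\phi$ of \fullref{\S}{sec:fingen}. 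The skeleton is the same (push $L$ and $M(L)$ blockwise through a rational transduction), and your version is self-contained, at the price of having to establish by hand the two facts the Green-index machinery hands the paper for free: that $L(A,T)$ is regular and that $\phi$ is rational. What you lose is the stronger Green-index statement, which the paper gets at no extra cost.

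Two steps in your construction need repair, both with the same tool. First, your justification of the regularity of $L(A,T)$ is off: the intersection of the largest right congruence and the largest left congruence contained in $\sigma = (T\times T)\cup((S-T)\times(S-T))$ need not be a two-sided congruence (a right congruence is not in general preserved by left multiplication). The correct statement, which the paper itself uses in \fullref{\S}{sec:residualfin}, is that the largest \emph{two-sided} congruence contained in $(T\times T)\cup\Delta_{S-T}$ has finitely many classes \cite[Theorems~2.4 and~4.3]{ruskuc_syntactic}; pulling this back along the evaluation map $A^+\to S$ exhibits $L(A,T)$ as a union of classes of a finite-index congruence on $A^+$, hence regular. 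Second, and more substantively, your transducer for $\phi$ cannot verify its guess for the final remainder by ``the same state-tracking'': every proper prefix of a \emph{minimal} block does represent an element of $S-T$ (so tracking an element of $\{\emptyword\}\cup(S-T)$ works during block detection), but a prefix of the final remainder $w''$ may represent an element of the infinite set $T$ before the whole of $w''$ falls back into $S-T$, so the element represented by the consumed prefix is not finite-state information. The fix is again the finite-index congruence above: its restriction to $S-T$ is contained in $\Delta_{S-T}$, so tracking the congruence class of the consumed prefix is finite-state and still determines exactly which element of $S-T$ the full remainder represents. With these two repairs your argument goes through.
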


This result is an immediate consequence of the corresponding result for
finite Green index subsemigroups:

\begin{theorem}
\label{thm:wordhypgreensub}
Word-hyperbolicity is inherited by finite Green index subsemigroups.
\end{theorem}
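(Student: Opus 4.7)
My plan is to construct a word-hyperbolic structure $(L', M(L'))$ for $T$ by transforming a given word-hyperbolic structure $(L, M(L))$ for $S$, exploiting the finiteness of the partition of $S \setminus T$ into relative $\rel{H}^T$-classes to reduce the problem to a cut-down of a context-free language by a regular one.

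\textbf{Setup.} Since finite generation is inherited by finite Green index subsemigroups by \cite[Theorems~4.1 \& 4.3]{cgr_greenindex}, $T$ is finitely generated; fix a finite generating set $B$. Let $x_1, \ldots, x_n$ be a transversal for the relative $\rel{H}^T$-classes in $S \setminus T$; by the finite Green index hypothesis, $n$ is finite. Using the fact that word-hyperbolicity of $S$ is independent of the chosen finite generating set, I fix a word-hyperbolic structure $(L, M(L))$ for $S$ over the alphabet $A = B \cup \{x_1, \ldots, x_n\}$, where the $x_i$ are treated as letters disjoint from $B$.

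\textbf{The candidate structure.} Define $L' = L \cap B^+$, the $L$-words using only generators of $T$. Being the intersection of regular languages, $L'$ is regular, and every word in $L'$ represents an element of $T$ since $B \subseteq T$. Assuming surjectivity of $L' \to T$ (see below), multiplication in $T$ coincides with multiplication in $S$ restricted to $T$, so
\[
M(L') \;=\; M(L) \;\cap\; \bigl(B^+ \#_1 B^+ \#_2 (B^+)^{\rev}\bigr).
\]
This is the intersection of a context-free language with a regular language, hence context-free, making $(L', M(L'))$ a word-hyperbolic structure for $T$.

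\textbf{Main obstacle and its resolution.} The principal difficulty is arranging that $L \cap B^+$ already surjects onto $T$: a priori, a given word-hyperbolic structure $(L, M(L))$ on $S$ may represent some elements of $T$ only by words that use letters from $\{x_1, \ldots, x_n\}$. Overcoming this requires augmenting $L$ with additional $B^+$-representatives for elements of $T$ while preserving regularity of $L$ and context-freeness of $M(L)$. The construction exploits the $\rel{H}^T$-class structure: elements of the same $\rel{H}^T$-class differ by left- and right-multiplication by elements of $T^1$, so for each letter $a \in A$ and each $i \in \{1,\ldots,n\}$ one can fix words $\alpha_{i,a}, \beta_{i,a} \in B^*$ and an index $j(i,a) \in \{0,1,\ldots,n\}$ (with $0$ denoting $T$) such that $x_i a =_S \alpha_{i,a} x_{j(i,a)} \beta_{i,a}$ (or $x_i a =_S \alpha_{i,a} \beta_{i,a}$ if $j(i,a) = 0$). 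This finite data encodes the behaviour of the $\rel{H}^T$-class partition under right-multiplication and can be packaged as a rational transduction that rewrites each $L$-word representing an element of $T$ into an equivalent $B^+$-word; the surjective augmentation $L \cup L_0$ with $L_0$ the image of this transduction is then regular, and preservation of context-freeness for the enlarged $M(L \cup L_0)$ is obtained by a case analysis on which of the three components of a triple in the multiplication language lie in $L$ versus $L_0$, each case reducing to $M(L)$ via the same transduction and the closure of context-free languages under rational transductions. Verifying this bookkeeping is the technical core of the argument.
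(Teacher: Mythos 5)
Your overall architecture is sound and is in fact the same as the paper's: transport the word-hyperbolic structure $(L,M(L))$ for $S$ through a rational transduction that converts $A$-words representing elements of $T$ into words over a generating alphabet for $T$, then invoke closure of regular (respectively, context-free) languages under rational transductions. The paper does exactly this, taking $K = L\circ R$ and $M(K) = M(L)\circ R'$, where $R$ is a synchronous rational relation already constructed in \cite[\S 10]{cgr_greenindex}. (Your detour through an augmented structure $(L\cup L_0, M(L\cup L_0))$ for $S$ followed by intersection with $B^+$ is harmless but unnecessary.)

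The gap is in the one step you defer: the construction of the transduction itself. Your finite data consists of identities $x_ia =_S \alpha_{i,a}x_{j(i,a)}\beta_{i,a}$ indexed by relative $\rel{H}^T$-class representatives, and you assert that this ``encodes the behaviour of the $\rel{H}^T$-class partition under right-multiplication''. But $\rel{H}^T = \rel{R}^T\cap\rel{L}^T$ is neither a left nor a right congruence, so right multiplication does not act on relative $\rel{H}^T$-classes: the class of $\elt{p}\,\elt{a}$ is not determined by the class of $\elt{p}$ together with the letter $a$. Consequently a left-to-right transducer whose state records the $\rel{H}^T$-class of the current prefix is not well defined; with $\beta_{i,a}\neq\emptyword$ the rewriting does not even have a consistent one-directional form; and one must also rule out the scan terminating in a ``stuck'' configuration $\elt{w} = \elt{u}\,\elt{x_j}$ with $\elt{w}\in T$ but $x_j$ not a letter of $B$. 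The correct construction tracks instead the relative $\rel{L}^T$-class of the prefix (there are finitely many such classes in $S\setminus T$), uses the fact that $\rel{L}^T$ \emph{is} a right congruence to obtain one-sided identities $h_ia =_S \tau_{i,a}h_{j(i,a)}$ with $\tau_{i,a}\in T^1$ in which $j(i,a)$ genuinely is the class of the whole extended prefix, and uses the fact that relative classes lie wholly in $T$ or wholly in $S\setminus T$ to conclude that the scan ends in the ``inside $T$'' state exactly when the input represents an element of $T$. This is precisely the content of the relation $R$ of \cite[\S 10]{cgr_greenindex}; citing it, as the paper does, closes the gap and also supplies the length-preserving form needed for $R'$ to be synchronous rational.
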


Both \fullref{Theorem}{thm:wordhyplarge} and
\fullref{Theorem}{thm:wordhypgreensub} appear here for the first time.

\begin{proof}
Let $S$ be a semigroup admitting a word-hyperbolic structure
$(L,M(L))$, where $L$ is over an alphabet $A$ representing a finite
generating set for $S$. Let $T$ be a subsemigroups of $S$ of finite
Green index. As per \cite[\S10]{cgr_greenindex}, there is a synchronous
rational relation $R \subseteq A^+ \times B^+$, where $B$ is a
particular alphabet representing a generating set for $T$, such that
\begin{enumerate}
\item If $p \in A^+$ represents an element of $T$, then there is a
  unique string $p' ∈ B^+$ with $(p,p') \in R$ and $\elt{p} = \elt{p'}$.
\item If $(p, p') \in R$ then $|p| = |p'|$, and $\elt{p} = \elt{p'}$ and
  so $p$ represents an element of $T$.
\end{enumerate}

Since $R$ is a synchronous rational language, so is
\begin{align*}
R' &= R(\#_1,\#_1)R(\#_2,\#_2)R^\rev \\
&= \{(u\#_1v\#_2w^\rev,u'\#_1v'\#_2w'^\rev) : (u,u') \in R, (v,v') \in R, (w,w') \in R\}.
\end{align*}

Let $K = L \circ R = \{p' \in B^+ : (\exists p\in L)((p,p') \in
R)\}$. Notice that $K$ is regular. Then
\begin{align*}
M(K) &= \{u'\#_1v'\#_2w'^\rev : u',v',w' \in K \land {u'}\;{v'}=_T{w'}\} \\
&= \{u'\#_1v'\#_2w'^\rev : (\exists u,v,w \in L)((u,u'),(v,v'),(w,w') \in R) \land {u'}\;{v'}=_T{w'}\} \\
&= \{u'\#_1v'\#_2w'^\rev : (\exists u,v,w \in L)((u,u'),(v,v'),(w,w') \in R) \land {u}\;{v}=_T{w}\} \\
&\qquad\qquad \text{(by property (ii) of $R$)} \\
&= \{u'\#_1v'\#_2w'^\rev : (\exists u,v,w \in L)((u\#v\#w^\rev,u'\#v'\#w'^\rev) \in R') \land {u}\;{v}=_T{w}\} \\
&= M(L) \circ R'.
\end{align*}
Therefore, since $M(L)$ is a context-free language, so is
$M(K)$. Therefore $(K,M(K))$ is a hyperbolic structure for $T$.
\end{proof}

\begin{question}
Is word-hyperbolicity inherited by small extensions?
\end{question}

It is an open question whether adjoining an identity to a
word-hyperbolic semigroups necessarily yields a word-hyperbolic monoid
\cite[Question~1]{duncan_hyperbolic}. Duncan \& Gilman pointed out
that this question would have an affirmative answer if word-hyperbolic
semigroups were necessarily to admit word-hyperbolic structures with
uniqueness (where every element of the semigroup has a unique
representative in the regular language)
\cite[Question~2]{duncan_hyperbolic}, but it is now known that this
does not hold~\cite[Example~4.2]{cm_wordhypunique}.

\subsection{Automaticity and asynchronous automaticity}
\label{sec:auto}

Automaticity for groups \cite{epstein_wordproc} has been generalized
to semigroups \cite{campbell_autsg}. Let $A$ be a finite alphabet
representing a generating set for a semigroup $S$. Let $L$ be a
regular language over $A$ such that every element of $S$ has at least
one representative in $L$. For any $w \in A^*$, define the relation
\[
L_w = \{(u,v) : u,v \in L, \elt{uw} = \elt{v}\}.
\]
The pair $(A,L)$ forms an \defterm{automatic structure} for $S$ if the
language $L_a$ is a synchronous rational relation (in the sense of
being recognized by a synchronous two-tape finite automaton) for each
$a \in A \cup \{\emptyword\}$. The pair $(A,L)$ forms an
\defterm{asynchronous automatic structure} for $S$ if the language
$L_a$ is a rational relation (in the sense of being recognized by a
possibly asynchronous two-tape finite automaton) for each $a \in A
\cup \{\emptyword\}$ (see \cite[Definition 3.3]{hoffmann_relatives}
for details). An \defterm{automatic semigroup} (respectively,
\defterm{asynchronous automatic semigroup}) is a semigroup that admits
an automatic (respectively, asynchronous automatic) structure.

\begin{theorem}[{\cite[Theorem~1.1]{hoffmann_autofinrees}}]
\label{thm:auto}
Automaticity is inherited by small extensions and by large subsemigroups.
\end{theorem}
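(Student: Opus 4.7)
My plan is to prove each direction by combining the given automatic structure with a new generating set tied to the finite boundary $S-T$, and in both cases the main regularity inputs come from the slicing property of synchronous rational relations: if $R \subseteq \Sigma^+ \times \Sigma^+$ is synchronous rational and $w \in \Sigma^+$ is fixed, then $\{u : (u,w) \in R\}$ is regular.

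For the small extensions direction, start from an automatic structure $(A,L)$ for $T$, adjoin each element of $S-T$ as a new single letter, and set $B = A \cup (S-T)$ and $L' = L \cup (S-T)$. To verify that $(B,L')$ is an automatic structure for $S$, decompose each multiplier $L'_b$ into cases according to where $u$, $v$, and $b$ lie. When $u$ or $v$ lies in $S-T$ the contribution is a finite relation, hence trivially synchronous rational; when $b \in A$ and $u,v \in L$ the multiplier is just the given $L_b$, since $\elt{u}\cdot b$ then remains in $T$. The substantive case is $u \in L$ with $b \in S-T$, where one must decide whether $\elt{u}\cdot b \in T$ and, for each $s \in S-T$, recognize those $u$ for which $\elt{u}\cdot b = s$. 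I would obtain the required regularity of these preimage languages by induction on $|S-T|$, adjoining one new element at a time and applying the slicing property to the multipliers of the intermediate automatic structure already built. This step is the main obstacle on the small-extension side, as it encodes how right multiplication by each adjoined letter redistributes elements across the growing boundary.

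For the large subsemigroups direction, start from an automatic structure $(B,L)$ for $S$ and invoke the rewriting machinery of \fullref{\S}{sec:fingen}: the finite generating set $D = \{d_{\rho, a,\sigma}\}$ for $T$, the rewriting $\phi : L(B,T) \to D^+$, and its right inverse $\psi : D^+ \to L(B,T)$. The language $L_T = L \cap L(B,T) = L - \bigcup_{s \in S-T}\{u \in L : \elt{u} = s\}$ is regular, since each of the finitely many subtracted sets is a slice of the equality multiplier $L_\emptyword$ at a fixed word representing $s$. Because $\phi$ is implementable by a deterministic finite transducer scanning $L_T$ left to right (reading each word up to the shortest prefix lying in $L(B,T)$ and emitting one $D$-letter per block), the image $K = \phi(L_T) \subseteq D^+$ is regular and represents every element of $T$.

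The main obstacle is verifying that each multiplier $K_d$ is synchronous rational. For $d \in D$, the condition $(p,q) \in K_d$ translates via $\psi$ into $(u,v) \in L_{w_d}$ for $u,v \in L_T$ with $\phi(u) = p$ and $\phi(v) = q$, where $w_d \in B^+$ is a fixed word representing $\elt{d}$; and $L_{w_d}$ is synchronous rational because $S$ is automatic. However, $\phi$ can compress strings, so this multiplier transports to a rational but not manifestly synchronous relation over $D$. The technical heart of the argument is to engineer a synchronous two-tape device over $D$ that internally simulates the padded $(u,v)$ computation over $B$, exploiting that the compression ratio $|u|/|p|$ is bounded above by a constant depending only on the chosen coset representatives of $S-T$; this bounded-ratio property is what allows the asynchronous $B$-side computation to be resynchronised on the $D$-side without leaving the rational class.
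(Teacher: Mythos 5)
Your small-extension setup ($B=A\cup(S-T)$, $L'=L\cup(S-T)$, case analysis on the multipliers) matches the standard proof, but the step you yourself flag as the main obstacle is resolved by a method that does not work. An induction that adjoins one element of $S-T$ at a time presupposes a chain of intermediate \emph{subsemigroups} $T=T_0\subset T_1\subset\cdots\subset T_n=S$ with $|T_{i+1}-T_i|=1$, and no such chain need exist: for $S=\mathbb{Z}_3=\{e,x,y\}$ with $T=\{e\}$, neither $T\cup\{x\}$ nor $T\cup\{y\}$ is closed (since $x^2=y$ and $y^2=x$), and the same obstruction survives in infinite examples (glue an infinite null semigroup onto this at a zero, and take $T$ to be $\{e\}$ together with the null part). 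Without an intermediate semigroup there is no ``intermediate automatic structure'' whose multipliers you can slice. The actual proof of Hoffmann, Thomas \& Ru\v{s}kuc instead builds, directly from $(A,L)$ and in one step, a finite family of auxiliary synchronous rational relations recording how each element of $S-T$ multiplies the elements represented by words of $L$ and which boundary element (if any) the product equals; no induction on $|S-T|$ is involved.

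For large subsemigroups you follow the original rewriting route through $D$ and $\phi$, and the regularity of $L_T=L\cap L(A,T)$ by slicing $L_{\emptyword}$ is fine; but the key claim underpinning your resynchronization step is false. The compression ratio of $\phi$ is \emph{not} bounded: a single letter $d_{\rho,a,\sigma}$ absorbs an entire factor $w'aw''$ of the input, and the words $w'$ and $w''$ can be arbitrarily long --- only the \emph{elements} they represent are confined to the finite set $S-T$, not the words (if $sa=s$ for some $s\in S-T$ and generator $a$, then arbitrarily long words represent $s$). So the transported multiplier over $D$ is an asynchronous rational relation with unbounded delay and cannot be resynchronized by the device you describe. (There is also a prior difficulty: a finite transducer computing $\phi$ must decide whether prefixes and suffixes of the input lie in $L(A,T)$, which is not a regular property in general, since after a prefix enters $T$ the device cannot track which element of the infinite set $T$ it represents.) This is precisely why the paper does not argue this way: it deduces the large-subsemigroup case from the Green-index result, where one has a \emph{length-preserving} synchronous rational relation $R\subseteq A^+\times B^+$ matching representatives, sets $K=L\circ R$ and $K_b=R^{-1}\circ L_w\circ R$, and synchronicity is preserved for free because $R$ never changes lengths. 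Your argument would need to be replaced by that (or by the original, much more delicate, direct argument of \cite{hoffmann_autofinrees}).
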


To prove the small extensions part of the result, proceed as follows:
Let $T$ be a semigroup that admits an automatic structure $(A,L)$ and
let $S$ be a small extension of $T$. Let $C$ be a finite set of
symbols in bijection with $S-T$. Let $A' = A \cup C$ and $L' = L \cup
C$. It can be shown that $(A', L')$ is an automatic structure for
$S$. Proving that the various relations $L'_a$ for $a \in A' \cup
\{\emptyword\}$ are regular involves first constructing some auxiliary
regular relations that describe how elements represented by letters in
$C$ multiply elements represented by words in $A^*$.

The inheritance of automaticity by large subsemigroups is more easily
deduced as a corollary of inheritance by finite Green index
subsemigroups:

\begin{theorem}[{\cite[Theorems~10.1 \&~10.2]{cgr_greenindex}}]
\label{thm:autogreensub}
Automaticity and asynchronous automaticity are inherited by finite
Green index subsemigroups.
\end{theorem}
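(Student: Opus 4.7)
The plan is to transport an (asynchronous) automatic structure for $S$ across the synchronous rational length-preserving relation $R \subseteq A^+ \times B^+$ established in \cite[\S10]{cgr_greenindex} and already exploited in the proof of Theorem~\ref{thm:wordhypgreensub}. Recall that $B$ is a finite generating set for $T$; property~(i) of $R$ associates to every $A^+$-word $p$ representing an element of $T$ a unique $B^+$-word $p'$ with $(p,p') \in R$ and $\overline{p'} = \overline{p}$, and property~(ii) asserts $|p| = |p'|$ and $\overline{p} = \overline{p'} \in T$ whenever $(p,p') \in R$.

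First, let $(A,L)$ be an (asynchronous) automatic structure for $S$, and set $K = L \circ R = \{p' \in B^+ : (\exists p \in L)((p,p') \in R)\}$. Since $L$ is regular and $R$ is a rational transduction, $K$ is regular; using property~(i), for every $t \in T$ we can choose $p \in L$ with $\overline{p} = t$, and the unique $p'$ with $(p,p') \in R$ lies in $K$ and represents $t$. Hence $K$ is a regular candidate normal-form language for $T$ over $B$.

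Next, it remains to show that for every $b \in B$ the multiplier relation $K_b = \{(u',v') \in K \times K : \overline{u'}\,\overline{b} = \overline{v'}\}$ is (synchronous) rational. Fix $w_b \in A^+$ with $\overline{w_b} = \overline{b}$ in $S$. A standard iterative argument using the (synchronous) rational multipliers $L_a$ for each letter $a$ appearing in $w_b$ shows that $L_{w_b} = \{(u,v) \in L \times L : \overline{u}\,\overline{w_b} = \overline{v}\}$ is (synchronous) rational. Using properties~(i) and~(ii) of $R$, one rewrites
\[
K_b = \{(u',v') \in K \times K : (\exists u,v \in L)((u,u'),(v,v') \in R \wedge (u,v) \in L_{w_b})\},
\]
so that $K_b = R^{-1} \circ L_{w_b} \circ R$. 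In the asynchronous case this is immediate from closure of asynchronous rational relations under composition. In the synchronous case it follows because composition with a length-preserving synchronous rational relation preserves synchronous rationality: a synchronous two-tape automaton for $K_b$ can be built that, on input $(u',v')$, nondeterministically reconstructs matching witnesses $u,v \in A^+$ position by position using $R$ (possible because $|u| = |u'|$ and $|v| = |v'|$) while simulating the automaton for $L_{w_b}$ on $(u,v)$; the bounded difference $\bigl||u'|-|v'|\bigr| = \bigl||u|-|v|\bigr|$ inherited from the automatic structure on $S$ controls the standard padding.

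The main obstacle is the last composition step in the synchronous case. The length-preserving nature of $R$ is essential there, for without it the $A^+$ witnesses $u,v$ would desynchronize from the $B^+$ components $u',v'$ and one could only conclude asynchronous rationality. Once this composition is established, one has simultaneously built an automatic structure $(B,K)$ for $T$ from one for $S$, and an asynchronous automatic structure for $T$ from an asynchronous one for $S$, yielding both parts of the theorem uniformly.
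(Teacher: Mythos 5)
Your proposal is correct and follows essentially the same route as the paper: take $K = L \circ R$ for the synchronous rational, length-preserving relation $R$ of \cite[\S 10]{cgr_greenindex}, and obtain each multiplier as $K_b = R^{-1} \circ L_w \circ R$, using closure of (a)synchronous rational relations under these compositions. Your added justification that length-preservation of $R$ is what keeps the composition synchronous is a correct elaboration of a point the paper leaves implicit.
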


The proof of this result uses a similar strategy to that described in
the proof of \fullref{Theorem}{thm:wordhypgreensub} above. Let $S$ be a
semigroup with an automatic (respectively, asynchronous automatic)
structure $(A,L)$. Let $T$ be a finite Green index subsemigroup of
$S$. Let $R \subseteq A^+ \times B^+$ be as in the proof of
\fullref{Theorem}{thm:wordhypgreensub}. Let $K = L \circ R$. Then $K$ is
regular and $K_b = R^{-1} \circ L_w \circ R$, where $w \in A^*$ is
some word representing the same element as $b$. Since $L_w$ is a
synchronous (respectively, asynchronous) rational relation, so is
$K_b$. Hence $(B,K)$ is an automatic (respectively, asynchronous
automatic) structure for $T$.

[Even written out in full, the above proof is rather shorter and
  simpler than the original proof for large subsemigroups
  \cite[\S~4]{hoffmann_autofinrees}.]

\subsection{Markovicity}
\label{sec:markov}

A semigroup is \defterm{Markov} if it admits a regular language of
unique normal forms that is closed under taking non-empty prefixes,
known as a \defterm{Markov language}. The concepts of Markov groups was
introduced by Gromov in his seminal paper on hyperbolic groups
\cite[\S~5.2]{gromov_hyperbolic}, and has recently been extended to
semigroups and monoids \cite{cm_markov}.

\begin{theorem}[{\cite[Theorem~16.1]{cm_markov}}]
Markovicity is inherited by small extensions and by large
subsemigroups.
\end{theorem}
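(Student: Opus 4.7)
For the small extensions direction, suppose $T$ admits a Markov language $L$ over $A$ and $S$ is a small extension. My plan is to introduce a finite alphabet $C$ in bijection with $S - T$, put $B = A \cup C$, and set $L' = L \cup C$. Then $L'$ is regular as a union of a regular language and a finite set; it represents each element of $S$ exactly once, via $L$ on $T$ and via $C$ on $S - T$; and it is closed under non-empty prefixes, since $L$ is, and each letter $c \in C$ has only itself as a non-empty prefix.

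For the large subsemigroups direction, suppose $S$ admits a Markov language $L$ over $A$ and $T$ is a large subsemigroup. The plan is to reuse the Campbell et al.\ rewriting machinery from \fullref{\S}{sec:fingen}: take $C$ to be the set of $L$-normal forms of the elements of $S - T$, form the alphabet $D$ of letters $d_{\rho, a, \sigma}$, and consider the rewriting map $\phi : L(A,T) \to D^+$. The candidate Markov language for $T$ is $L' = \phi(L \cap L(A,T))$. Regularity will follow from the facts that $L \cap L(A,T)$ is obtained from $L$ by deleting the finitely many $L$-representatives of $S - T$, and that $\phi$ is realizable as a finite-state transduction --- its state need only record which element of $(S - T) \cup \{*\}$ the current partial chunk represents, with bounded lookahead to determine the $\sigma$-component of the final emitted letter. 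Uniqueness of representatives for $T$ follows because $\phi$ is injective on $L \cap L(A,T)$: any two distinct words there represent distinct elements of $T$ by the uniqueness property of $L$, and $\phi$ preserves the semigroup element.

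The main obstacle is to verify prefix-closure of $L'$. Given $w \in L \cap L(A,T)$, unroll the recursive definition of $\phi$ as
\[
w = w'_1 a_1 \, w'_2 a_2 \, \cdots \, w'_m a_m \, w''_m,
\]
where each $w'_j a_j$ is the shortest prefix of the $j$-th tail lying in $L(A,T)$ and $w''_m \notin L(A,T)$; then $\phi(w) = d_1 d_2 \cdots d_m$, with $d_j = d_{\rep{w'_j}, a_j, \emptyword}$ for $j < m$ and $d_m = d_{\rep{w'_m}, a_m, \rep{w''_m}}$. For each $k < m$, set $v_k = w'_1 a_1 \, w'_2 a_2 \, \cdots \, w'_k a_k$. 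Then $v_k$ is a prefix of $w$, hence $v_k \in L$ by Markovicity; and since each $\pi(w'_j a_j) \in T$ and $T$ is a subsemigroup, $\pi(v_k) \in T$, so $v_k \in L \cap L(A,T)$. Running $\phi$ on $v_k$ reproduces the first $k - 1$ emissions of $\phi(w)$ and then, as the remaining tail is empty, emits $d_{\rep{w'_k}, a_k, \emptyword} = d_k$ and halts, giving $\phi(v_k) = d_1 \cdots d_k$. The critical alignment --- which would fail at a truncation not landing on a chunking boundary --- is that the $\sigma$-components of $d_j$ vanish for all $j < m$, so the last letter of $\phi(v_k)$ coincides exactly with the $k$-th letter of $\phi(w)$. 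The case $k = m$ is immediate, completing the verification.
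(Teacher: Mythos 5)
Your argument is correct and takes essentially the same route as the paper's (sketched) proof: adjoin the complement as new letters for small extensions, and for large subsemigroups push the Markov language for $S$ through the transducer-realizable rewriting map $\phi$, with your chunk-boundary analysis supplying the prefix-closure verification that the paper leaves implicit. The one loose phrase is ``bounded lookahead'': deciding the $\sigma$-component of the final emitted letter requires seeing the end of the input, but delaying each emission by one chunk keeps the computation finite-state, so the regularity of $\phi(L \cap L(A,T))$ stands.
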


Let $S$ be a semigroup and $T$ a large subsemigroup of $S$. If $T$ is
Markov, and $L$ is a Markov language for $T$, then $L \cup (S-T)$ is a
Markov language for $S$. Thus Markovicity is inherited by small
extensions. In the other direction, the proof is more complex: in
outline, one can show that the rewriting map $\phi$ from
\fullref{\S}{sec:fingen} can be computed by a transducer, and then one
can apply this to a Markov language for $S$ to obtain a Markov
language for $T$.

\subsection{Automatic presentations}
\label{sec:fap}

Automatic presentations, also known as FA-pre\-sent\-a\-tions, were
introduced by Khoussainov \& Nerode \cite{khoussainov_autopres} to
fulfill a need to extend finite model theory to infinite structures
while retaining the solubility of interesting decision problems, and
have recently been applied to algebraic structures such as groups
\cite{oliver_autopresgroups}, rings \cite{nies_rings}, and
semigroups \cite{cort_apcancsg,cort_apsg}.

\begin{definition}
Let $S$ be a semigroup. Let $L$ be a regular language over a finite
alphabet $A$, and let $\phi : L\rightarrow S$ be a surjective
mapping. Then $(L,\phi)$ is an automatic presentation for $S$ if the
relations
\[
\Lambda(=,\phi) = \{(w_1,w_2)\in L^2:w_1\phi=w_2\phi\}
\]
and
\[
\Lambda(\circ,\phi) = \{(w_1,w_2,w_3)\in L^3: (w_1\phi)(w_2\phi) = w_3\phi\}
\]
are regular, in the sense of being recognized by multi-tape
synchronous finite automata.

If $(L,\phi)$ is an automatic presentation for $S$ and the
mapping $\phi$ is injective (so that every element of the semigroup
has exactly one representative in $L$), then $(L,\phi)$ is said to be
\defterm{injective}.

If $(L,\phi)$ is an automatic presentation for $S$ and $L$ is
a language over a one-letter alphabet, then $(L,\phi)$ is a
\defterm{unary} automatic presentation for ${S}$, and
${S}$ is said to be \defterm{unary FA-pre\-sent\-a\-ble}.
\end{definition}

\begin{theorem}
Admitting an automatic presentation is inherited by large
subsemigroups, but not in general by small extensions.
\end{theorem}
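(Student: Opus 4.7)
The plan is to treat the two directions separately. For the positive result, suppose $(L,\phi)$ is an automatic presentation of $S$ with $L$ regular over an alphabet $A$, and let $T$ be a large subsemigroup. The key step is to show that the sublanguage $L' = \phi^{-1}(T)$ of representatives of elements of $T$ is itself regular. Since $S - T$ is finite, I would enumerate it as $s_1,\ldots,s_n$ and pick a representative $w_i \in L$ with $w_i\phi = s_i$ for each $i$. For each fixed $w_i$ the set $\phi^{-1}(s_i) = \{u \in L : (u,w_i) \in \Lambda(=,\phi)\}$ is the projection onto the first coordinate of $\Lambda(=,\phi) \cap (L \times \{w_i\})$; as both $\Lambda(=,\phi)$ and $L \times \{w_i\}$ are synchronous rational, so is their intersection, and the first-coordinate projection is regular. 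Hence $L' = L \setminus \bigcup_{i=1}^n \phi^{-1}(s_i)$ is a regular language.

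Having $L'$ in hand, I would set $\phi' = \phi|_{L'} : L' \to T$, which is surjective by construction. Its equality relation $\Lambda(=,\phi')$ is simply $\Lambda(=,\phi) \cap (L' \times L')$, and its multiplication relation $\Lambda(\circ,\phi')$ is $\Lambda(\circ,\phi) \cap (L' \times L' \times L')$; note that the last condition $w_3 \in L'$ is automatic because $T$ is closed under multiplication, so one in fact only needs to intersect with $L' \times L' \times L$. Both intersections stay synchronous rational by standard closure properties, confirming that $(L',\phi')$ is an automatic presentation of $T$. (If the original presentation was injective or unary, then so is this restriction, so in fact the same construction shows inheritance by large subsemigroups for each refined notion.)

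For the negative result, I would invoke the counterexample of Cort\'es \cite[Pr.~6.3]{cort_const} rather than reinvent one. The general shape of such a construction is to begin from an FA-presentable $T$ and adjoin one or more elements whose multiplicative interaction with $T$ is intricate enough that, although $T$ alone admits a synchronous rational multiplication relation, any candidate encoding of $S$ forces $\Lambda(\circ, \cdot)$ to be non-synchronous-rational. The main obstacle in producing such a counterexample from scratch is that the known obstructions to FA-presentability (growth, isomorphism-type constraints on definable fibres, etc.) are fairly coarse, so one needs a delicate balance: $T$ must be complicated enough that the new elements' action relations cannot be recognised by any synchronous multi-tape automaton, yet $T$ must itself fit within the FA-presentable world. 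It is precisely this tension that makes the counterexample interesting rather than routine.
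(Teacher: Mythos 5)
Your positive direction is correct and differs from the paper's argument in one genuine respect: the paper first normalizes the presentation to be injective (citing \cite[Theorem~3.4]{blumensath_diploma}), so that $K=(S-T)\phi^{-1}$ is a \emph{finite} set of words and $L'=L-K$ is trivially regular; you instead keep $\phi$ arbitrary and show directly that each fibre $\phi^{-1}(s_i)$ is regular, as the first-coordinate projection of $\Lambda(=,\phi)\cap(L\times\{w_i\})$. Both work, and the closing step --- intersecting $\Lambda(=,\phi)$ and $\Lambda(\circ,\phi)$ with the appropriate powers of $L'$ --- is identical; your observation that the third coordinate is automatically in $L'$ because $T$ is closed under multiplication is a small bonus. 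Your route has the advantage of not invoking the injectivization theorem (and hence not having to check that injectivization preserves unarity in the unary case); the paper's route is shorter once that theorem is granted.

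For the negative direction, however, you have not given a proof: you defer to the cited counterexample (which, incidentally, is due to Cain, Oliver, Ru\v{s}kuc \& Thomas, not Cort\'es) and then speculate about its shape, and the speculation does not match the actual mechanism. The paper's counterexample (\fullref{Example}{ex:finreesup}) is not a ``delicate balance'' exploiting fine automaton-theoretic obstructions; it is an undecidability argument. One takes a unary FA-presentable semilattice $S$ on elements $\{s_i,t_i\}$ and adjoins a single element $e$ whose position in the order encodes a non-recursively-enumerable set $Y\subseteq\nset$ (namely $s_i\leq e\iff i\in Y$). Since any FA-presentable structure has decidable first-order theory \cite[Corollary~4.2]{khoussainov_autopres}, an automatic presentation of $U=S\cup\{e\}$ would let one enumerate $Y$, a contradiction. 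So the obstruction used is the decidability of the first-order theory --- one of the standard ``coarse'' obstructions you set aside --- and the counterexample is a small extension by one element of a \emph{unary} FA-presentable semigroup, which is why it also settles the unary version of the theorem. If you want a self-contained proof rather than a citation, this is the construction to supply.
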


\begin{theorem}
Admitting a unary automatic presentation is inherited by large
subsemigroups, but not in general by small extensions.
\end{theorem}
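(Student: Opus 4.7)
The strategy is to mirror the obvious proof for the general automatic presentations case, observing that restriction to a subsemigroup does not enlarge the alphabet and so preserves unariness, and to appeal to the literature for the small-extensions counterexample.

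For the positive half, suppose $(L,\phi)$ is a unary automatic presentation for $S$, with $L$ over a one-letter alphabet $A=\{a\}$, and let $T$ be a large subsemigroup of $S$ with finite complement $F = S - T$. I would set $L_T = L \cap \phi^{-1}(T)$ and verify that $(L_T, \phi|_{L_T})$ is itself a unary automatic presentation for $T$. The first task is to show $L_T$ is regular. For each $s\in F$, fix a single representative $w_s\in L$ with $w_s\phi = s$; then $\phi^{-1}(s)\cap L$ is the first projection of the synchronous rational relation $\Lambda(=,\phi) \cap (L \times \{w_s\})$ and is therefore regular. Taking the finite union over $s\in F$ and complementing inside $L$ yields $L_T$, still regular and still over the one-letter alphabet $A$. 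The two defining relations $\Lambda(=,\phi|_{L_T})$ and $\Lambda(\circ,\phi|_{L_T})$ are the restrictions of $\Lambda(=,\phi)$ and $\Lambda(\circ,\phi)$ to $L_T\times L_T$ and $L_T^3$ respectively, and these remain synchronous rational by standard closure properties of regular relations under intersection with products of regular languages. Surjectivity of $\phi|_{L_T}$ onto $T$ is immediate.

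For the negative half, I would simply invoke the counterexample in \cite[Example~33]{crt_unaryfa}, where a unary FA-presentable semigroup is exhibited whose small extension is not unary FA-presentable. Reconstructing the example here would add little, since the obstruction is intrinsic to unariness: adjoining even one element can break the tight quantitative constraints that a one-letter alphabet imposes on an FA-presentable semigroup, despite having no effect on general automaticity.

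I anticipate essentially no obstacle in the positive direction; the only point requiring a moment's thought is that preimages $\phi^{-1}(s) \cap L$ of single elements are regular, which follows at once from projection of $\Lambda(=,\phi)$. The substantive content sits entirely in the counterexample, and that is already worked out in the cited reference.
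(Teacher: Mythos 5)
Your proof is correct and follows essentially the same route as the paper: delete from $L$ the representatives of the finitely many elements of $S-T$, check the remainder is regular over the same (one-letter) alphabet, restrict $\Lambda(=,\phi)$ and $\Lambda(\circ,\phi)$, and cite \cite[Example~33]{crt_unaryfa} for the failure under small extensions (the paper reproduces that example as its Example~\ref{ex:finreesup}, a semilattice built from a non-recursively-enumerable set). The one genuine point of divergence is how you establish regularity of the deleted set: the paper first normalizes $\phi$ to be injective via \cite[Theorem~3.4]{blumensath_diploma}, so that $(S-T)\phi^{-1}$ is finite and trivially regular, whereas you keep $\phi$ arbitrary and observe that each $\phi^{-1}(s)\cap L$ is the projection of $\Lambda(=,\phi)\cap(L\times\{w_s\})$ and hence regular. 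Your variant is slightly more self-contained and, for the unary case specifically, sidesteps a point the paper glosses over --- namely that the injectivity normalization must be checked to preserve the one-letter alphabet (it does, e.g.\ by selecting length-minimal representatives, but that needs saying); the paper's version, once that is granted, is marginally shorter. Both are sound.
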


To prove the large subsemigroups part of the result, proceed as
follows. Let $S$ be a semigroup admitting an automatic presentation
(respectively, unary automatic presentation) $(L,\phi)$ and let $T$ be
a large subsemigroup of $S$. Assume without loss that $\phi$ is
injective \cite[Theorem~3.4]{blumensath_diploma}. Let $K =
(S-T)\phi^{-1}$. Since $S-T$ is finite and $\phi$ is injective, $K$ is
a finite subset of $L$ and therefore regular. So $L' = L - K$ is
regular, and $L'\phi|_{L'} = T$. Finally,
\begin{align*}
\Lambda(=,\phi|_{L'}) &= \Lambda(=,\phi) \cap (L' \times L'), \\
\Lambda(\circ,\phi|_{L'}) &= \Lambda(\circ,\phi) \cap (L' \times L' \times L'),
\end{align*}
and so $(L',\phi|_{L'})$ is an automatic presentation (respectively,
unary automatic presentation) for $T$.

The following counterexample shows that a small extension of a
semigroup admitting a unary automatic presentation is does not in
general admit an automatic presentation.

\begin{example}
\label{ex:finreesup}
Define a semilattice $S$ as follows. The set of elements is $\{s_i,t_i
: i \in \nset \cup \{0\}\}$, and the order $\leq$ is defined on $S$ as
follows: for all $i,j \in \nset$,
\begin{align*}
t_i \leq t_j &\iff i \leq j \\
t_i \leq s_j &\iff i \leq j \\
s_i \leq s_j &\iff i = j \\
s_i \not\leq t_j.
\end{align*}
The Hasse diagram for $(S,\leq)$ is as illustrated in
\fullref{Figure}{fig:fap}(a).

Let $Y \subseteq \nset \cup \{0\}$ be non-recursively enumerable. Let
$U = S \cup \{e\}$ and extend the relation $\leq$ to $U$ as
follows: for $i \in \nset$, 
\begin{align*}
t_i \leq e \\
s_i \leq e &\iff i \in Y.
\end{align*}
The Hasse diagram for $(U,\leq)$ is as illustrated in
\fullref{Figure}{fig:fap}(b).

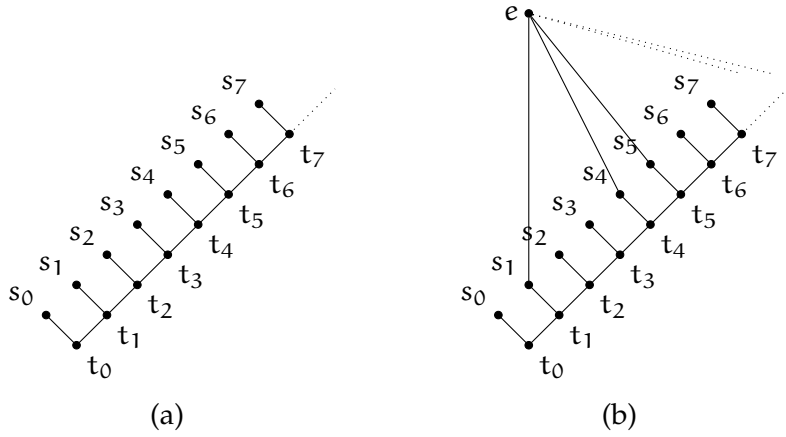
\begin{figure}[t]
\centerline{%
\begin{tabular}{c@{\hspace{15mm}}c}
\begin{tikzpicture}[x=.4cm,y=.4cm,vertex/.style={circle,draw,fill=black,inner sep=1pt}],
\foreach\x in {0,1,...,7} {
  \node[vertex] at (\x,\x) {};
  \node[anchor=north west] at (\x,\x) {$t_{\x}$};
  \node[vertex] at (\x-1,\x+1) {};
  \node[anchor=south east] at (\x-1,\x+1) {$s_{\x}$};
  \draw (\x-1,\x+1)--(\x,\x);
}
\draw (0,0)--(7,7);
\draw[dotted] (7,7)--(8.5,8.5);
\end{tikzpicture}
&
\begin{tikzpicture}[x=.4cm,y=.4cm,vertex/.style={circle,draw,fill=black,inner sep=1pt}],
\foreach\x in {0,1,...,7} {
  \node[vertex] at (\x,\x) {};
  \node[anchor=north west] at (\x,\x) {$t_{\x}$};
  \node[vertex] at (\x-1,\x+1) {};
  \node[anchor=south east] at (\x-1,\x+1) {$s_{\x}$};
  \draw (\x-1,\x+1)--(\x,\x);
}
\draw (0,0)--(7,7);
\draw[dotted] (7,7)--(8.5,8.5);
\node[vertex] (e) at (0,11) {};
\node[anchor=east] at (e) {$e$};
\draw (e)--(0,2);
\draw (e)--(3,5);
\draw (e)--(4,6);
\draw[dotted] (e)--(7,9);
\draw[dotted] (e)--(8,9);
\end{tikzpicture}
\\
(a) & (b) 
\end{tabular}}
\caption{Hasse diagrams for (a) $(S,\leq)$ and (b) $(U,\leq)$, assuming for the sake of illustration that $1$, $4$, $5$ lie in $Y$.}
\label{fig:fap}
\end{figure}

The semilattices $(S,\leq)$ and $(U,\leq)$ can be viewed as meet
semigroups $(S,\land)$ and $(U,\land)$.

Let $\phi : a^* \to S$ be defined by $a^{2i} \mapsto s_i$ and
$a^{2i+1} \mapsto t_i$ for all $i \geq 0$. Then it is easy to see that
$\Lambda(=,\phi)$ is the diagonal relation and that
$\Lambda(\land,\phi)$ is also regular. So $(a^*,\phi)$ is a unary
automatic presentation for the semigroup $(S,\land)$.

However, the semigroup $(U,\land)$ does not admit an automatic
presentation: using the fact that a structure admitting an automatic
presentation has solvable first-order theory
\cite[Corollary~4.2]{khoussainov_autopres}, it is straightforward to
prove that if it did admit an automatic presentation, there would be
an algorithm to enumerate $Y$, contradicting the fact that $Y$ is not
recursively enumerable.
\end{example}

\section{Geometry}
\label{sec:geometry}

\subsection{Hyperbolicity}
\label{sec:hyperbolic}

\begin{definition}
A geodesic space $(X,d)$ is $\delta$-hyperbolic if, for every three
points $x,y,z$ and geodesics $\alpha$, $\beta$, $\gamma$ from $x$ to
$y$, $y$ to $z$, and $z$ to $x$ respectively, then for every point $u$
on $\alpha$, the distance from $u$ to $\beta \cup \gamma$ is less than
$\delta$. (If this holds, then by interchanging $x$, $y$, and $z$,
appropriately, one sees that every point on $\beta$ is within $\delta$
of $\alpha \cup \gamma$, and every point on $\gamma$ is within
$\delta$ of $\alpha \cup \beta$.)
\end{definition}

\begin{definition}
Let $S$ be a monoid generated by a set $A$. Define a metric $d$ on
the Cayley graph $\Gamma(S,A)$ by defining $d(u,v)$ to be the length
of the shortest undirected path connecting $u$ and $v$. Under this
definition, $\Gamma(S,A)$ is technically not a geodesic space, but it
can be made into one by extending the metric $s$ to the whole of the
Cayley graph by making each edge isometric to the interval $[0,1]$. A
monoid is \defterm{hyperbolic} if its Cayley graph (with respect to
some generating set) is hyperbolic.
\end{definition}

The definition of hyperbolicity is limited to monoids because the
Cayley graph of a semigroup without an identity is not necessarily
connected.

\begin{theorem}[Folklore]
Within the class of monoids, hyperbolicity is not inherited by large
subsemigroups.
\end{theorem}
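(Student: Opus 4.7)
The plan is to exhibit a counterexample consisting of a hyperbolic monoid $S$ containing a non-hyperbolic monoid $T$ as a large subsemigroup, using the standard trick of adjoining a zero. First, I take $T$ to be any finitely generated monoid whose Cayley graph is well-known not to be hyperbolic; the simplest choice is the free abelian group $T = \zset^2$, generated by $A_T=\{a,a^{-1},b,b^{-1}\}$, whose Cayley graph is quasi-isometric to the Euclidean plane and so fails to be $\delta$-hyperbolic for any $\delta$.

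Next, I form the small extension $S = \adjzero{T}$ by adjoining to $T$ a new absorbing element, which I denote $0$, declaring $0\cdot s = s\cdot 0 = 0$ for every $s \in S$. Since the identity of $T$ continues to act as an identity on $S$, the set $S$ is a monoid, and since $|S-T|=1$ the subsemigroup $T$ is large in $S$.

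The key step is to check that $S$ is hyperbolic. Take the generating set $A = A_T\cup\{0\}$ for $S$. In the Cayley graph $\Gamma(S,A)$, the generator $0$ gives, from every vertex $s\in S$, an edge to $s\cdot 0 = 0$; hence $0$ is adjacent in the undirected Cayley graph to every vertex of $\Gamma(S,A)$. It follows that the metric $d$ on $\Gamma(S,A)$ (extended to make each edge isometric to $[0,1]$) satisfies $d(u,v)\le 2$ for all $u,v\in S$. A geodesic space of bounded diameter is trivially $\delta$-hyperbolic with $\delta$ equal to its diameter, so $\Gamma(S,A)$ is hyperbolic, and therefore $S$ is a hyperbolic monoid containing the non-hyperbolic monoid $T$ as a large subsemigroup.

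There is essentially no obstacle here, which is why the table lists the result as trivial: the entire construction exploits the fact that adjoining an absorbing element collapses the diameter of the Cayley graph to at most $2$ without disturbing the subsemigroup $T$, so any pathology in the geometry of $T$ is invisible from $S$.
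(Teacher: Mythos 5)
Your proposal is correct and is essentially identical to the paper's own argument: the paper also takes a non-hyperbolic monoid such as $\zset\times\zset$, adjoins a zero, and observes that every pair of elements is then a bounded distance apart through the zero, making the extension trivially hyperbolic. Your explicit verification that the diameter is at most $2$ just fills in the detail the paper leaves implicit.
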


To see this, let $T$ be any non-hyperbolic monoid (for example, $\zset \times
\zset$) and let $S = \adjzero{T}$. Then every pair of elements in $S$
is a bounded distance apart via a path running through the zero. Hence
$S$ is trivially hyperbolic but contains a non-hyperbolic large
subsemigroup $T$.

\begin{question}
Within the class of monoids, is hyperbolicity inherited by small extensions?
\end{question}

The analogous graph-theoretical question has a negative answer: it is
easy to exhibit a non-hyperbolic graph with a hyperbolic subgraph
where the complement contains a single vertex. So if this question has
a positive answer, it is somehow dependent on the restricted nature of
Cayley graphs of monoids.

However, positive results can be proved within the class of monoids
with the following property:

\begin{definition}
Let $S$ be a monoid generated by a finite set $A$. The monoid $S$ is
of \defterm{finite geometric type} (\fgt) if there is a constant $n$
such that, for every $q \in S$ and $a \in A$, there are at most $n$
distinct solutions $x$ to the equation $xa = q$.
\end{definition}

\begin{theorem}
\label{thm:hyperbolicfgt}
Within the class of monoids of finite geometric type, hyperbolicity
is inherited by small extensions and by large subsemigroups.
\end{theorem}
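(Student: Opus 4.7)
The plan is to prove that, within the class of finitely generated FGT monoids, the Cayley graphs of $T$ and $S$ are quasi-isometric (for appropriate finite generating sets related by the inclusion), whence hyperbolicity transfers by its invariance under quasi-isometry of geodesic metric spaces.

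For the small-extension direction, let $A$ be a finite generating set for $T$ and take $A' = A \cup (S-T)$ as a generating set for $S$. For the large-subsemigroup direction, pick a finite generating set for $S$ and pass to a finite generating set $D$ for $T$ via the Campbell--Robertson--Ru\v{s}kuc rewriting construction from \fullref{\S}{sec:fingen}. In either case, the natural inclusion $T \hookrightarrow S$ is cobounded because $|S-T|$ is finite, so every vertex of $\Gamma(S, A')$ lies within bounded distance of $T$. The crux is to show this inclusion is a quasi-isometric embedding.

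The trivial bound $d_S(t_1, t_2) \le d_T(t_1, t_2)$ always holds. The opposite direction requires controlling two kinds of shortcut in $\Gamma(S, A')$: excursions through $S-T$ and edges from $t \in T$ to $ta' \in T$ labelled by a generator $a' \in S-T$. Since a geodesic does not revisit vertices, an $S$-geodesic between elements of $T$ enters $S-T$ at most $|S-T|$ times, so there are only boundedly many excursions to reroute. The FGT hypothesis is what rules out the second, potentially more damaging, kind of shortcut: it bounds the in-degree under each generator, so for fixed $a' \in S-T$ the shortcut map $t \mapsto ta'$ is at most $n$-to-one into $T$. This is precisely what fails for constructions like $T^0$, where every element of $T$ is a predecessor of the zero under every generator; FGT is designed to exclude that pathology.

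The main obstacle is giving a rigorous local-to-global argument that replaces an $S$-geodesic between points of $T$ by a $T$-path of at most linearly larger length. One approach is to induct on the number of $(S-T)$-labelled edges and $S-T$-vertex visits in the geodesic, using the bounded in-degree at each step to bound the length of the replacement $T$-segment by a constant depending only on $|S-T|$ and the FGT constant $n$. An alternative is a direct thin-triangle argument in $\Gamma(S, A')$ combining the $\delta$-thinness of triangles in $\Gamma(T, A)$ with a bounded-excursion argument to absorb the finite set $S-T$. Either way, once the quasi-isometric embedding is in hand, both parts of the theorem follow at once from the standard quasi-isometry invariance of hyperbolicity.
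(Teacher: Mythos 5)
Your overall strategy --- show that the inclusion $T \hookrightarrow S$ is a quasi-isometry for suitable finite generating sets and then invoke the quasi-isometry invariance of hyperbolicity --- is exactly the route the paper takes (via Theorem~\ref{thm:reesquasi}), and you correctly isolate the two kinds of shortcut that an $S$-geodesic between points of $T$ can exploit. However, the way you propose to handle the first kind would fail, and the second is left essentially unproved. For excursions through $S-T$, bounding their \emph{number} by $|S-T|$ buys you nothing: what must be bounded is the $T$-distance between the vertex $p\in T$ at which the geodesic leaves $T$ and the vertex $q\in T$ at which it re-enters. Your own example $\adjzero{T}$ illustrates the danger: there a geodesic between any $t_1,t_2\in T$ makes a \emph{single} excursion, through the vertex $0$, and yet the metric collapses, because the entry and exit points can be arbitrary elements of $T$. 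So the FGT hypothesis is needed for this first kind of shortcut as well, not only for the second: it forces the set $B$ of $T$-vertices adjacent to some vertex of $S-T$ (that is, $t\in T$ with $ta\in S-T$ or $sa=t$ for some generator $a$ and some $s\in S-T$) to be finite, whence every excursion can be rerouted inside $\Gamma_T$ with length at most $k_1:=\max\{d_T(t,t'):t,t'\in B\}$.

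For the second kind of shortcut --- an edge from $p\in T$ to $q=ps\in T$ labelled by $s\in S-T$ --- the observation that $t\mapsto ts$ is at most $n$-to-one does not bound $d_T(p,ps)$: a finite-to-one map can still displace points arbitrarily far. The paper's argument here is genuinely more delicate: writing $p=a_1\cdots a_m$ as a shortest word over $A$ with $m$ large, one considers the suffix products $a_m s,\ a_{m-1}a_m s,\ \ldots$ and uses the pigeonhole principle, together with the FGT bound on coterminal equally-labelled edges, to show that some $a_{m-i}\cdots a_m s$ already lies in $T$; this produces a rerouting of the edge inside $\Gamma_T$ of bounded length. You explicitly flag this local-to-global step as ``the main obstacle'' and offer only two unexecuted sketches, so the technical core of the proof is missing. (A minor further point: the resulting quasi-isometry constants depend on the $T$-word-lengths of finitely many specific elements, not merely on $|S-T|$ and the FGT constant $n$.)
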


This follows immediately from \fullref{Theorem}{thm:reesquasi} below
and the fact that hyperbolicity is preserved under quasi-isometries
\cite[Theorem~5.12]{ghys_hyperbolic}. Recall the definition of a
quasi-isometry: a map $\phi$ from a metric space $(X,d_X)$ to another
metric space $(Y,d_Y)$ is a \defterm{$(k,\epsilon,c)$-quasi-isometry},
where $k,\epsilon,c \in \rset$, if
\[
(\forall x,x' \in X)\Bigl(\frac{1}{k}d_X(x,x') - \epsilon \leq d_Y(x\phi,x'\phi) \leq kd_X(x,x') + \epsilon\Bigr),
\]
and 
\[
(\forall y \in Y)(\exists x \in X)(d_Y(y,x\phi) \leq c).
\]

\begin{theorem}
\label{thm:reesquasi}
Let $S$ be a monoid of finite geometric type and let $T$
be a finite Rees index submonoid of $S$. Then the natural
embedding map $T \hookrightarrow S$ is a quasi-isometry.
\end{theorem}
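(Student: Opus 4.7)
The plan is to verify the defining conditions of a quasi-isometry for the inclusion $\iota : T \hookrightarrow S$ between the undirected Cayley graph metric spaces $(T, d_T)$ and $(S, d_S)$. I would take the Campbell et al.\ generating set $D$ for $T$ described in \fullref{\S}{sec:fingen}; each $d \in D$ represents an element of $S$ of $A$-length at most $M := 2\max_{c \in C}|c|_A + 1$. Coarse surjectivity $d_S(s, T) \leq c$ is immediate: every $s \in S - T$ satisfies $d_S(s, 1) < \infty$ since $1 \in T$, and the maximum over the finite set $S - T$ is finite. The upper Lipschitz bound $d_S(t, t') \leq M \, d_T(t, t')$ is equally straightforward, since every undirected $D$-edge of $\Gamma(T, D)$ lifts to an undirected $A$-walk of length at most $M$ in $\Gamma(S, A)$.

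For the reverse inequality $d_T(t, t') \leq K \, d_S(t, t')$, fix $t, t' \in T$ and an $S$-geodesic $t = s_0, s_1, \ldots, s_m = t'$ of length $m = d_S(t, t')$. Since a geodesic visits distinct vertices, at most $|S - T|$ of the $s_i$ lie in $S - T$. Writing $t = t_0, t_1, \ldots, t_r = t'$ for the ordered subsequence that lies in $T$, and letting $\ell_j$ denote the $S$-length of the sub-path from $t_j$ to $t_{j+1}$, the interior vertices of each sub-path are distinct elements of $S - T$, so $\ell_j \leq L := |S - T| + 1$, while $\sum_j \ell_j = m$. If one can produce a constant $K_0$ such that
\[
d_T(t_0, t_1) \leq K_0 \quad\text{whenever } t_0, t_1 \in T \text{ and } d_S(t_0, t_1) \leq L,
\]
then summation over segments yields $d_T(t, t') \leq r K_0 \leq m K_0 = K_0 \, d_S(t, t')$, completing the argument.

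The hard part is securing this uniform $K_0$, and this is precisely where the finite geometric type hypothesis is used. FGT forces $\Gamma(S, A)$ to have total vertex degree at most $(n+1)|A|$ (at most $|A|$ forward successors and $n|A|$ predecessors per vertex), so the $S$-ball $B_S(t_0, L)$ has cardinality bounded by a function of $n$, $|A|$, and $L$ only, independently of $t_0$. My plan is to enlarge $D$ by adjoining every $T$-element of $A$-length at most $L$ (a finite set, and still a generating set for $T$) and then, by a case analysis over the finitely many ``abstract excursion types'' of $S$-length at most $L$ --- words over $A \cup A^{-1}$ combined with at most $n$ preimage choices at each backward step --- to show that each induced transition $t_0 \mapsto t_1$ can be realized by an undirected $B$-walk in $\Gamma(T, B)$ of length depending only on $L$, $n$, $|A|$, $M$, and $|S - T|$. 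The delicate point is mirroring the backward $A$-steps through $S - T$ as combinations of forward and backward $B$-edges: with no group-theoretic inverse available, it is FGT's bound on preimage branching that allows the Campbell et al.\ rewriting to convert each such step into a segment of $\Gamma(T, B)$ of length independent of $t_0$, yielding the required $K_0$.
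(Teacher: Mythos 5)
Your overall skeleton matches the paper's: both arguments take an $S$-geodesic between two elements of $T$ and replace its pieces by $T$-paths of uniformly bounded length, and your reduction to the local statement ``$d_T(t_0,t_1)\leq K_0$ whenever $t_0,t_1\in T$ and $d_S(t_0,t_1)\leq L$'' correctly isolates the only hard step. The difficulty is that this local statement essentially \emph{is} the theorem, and the route you propose to it does not close. Finiteness of the number of ``abstract excursion types'' of length at most $L$ cannot by itself bound $d_T(t_0,t_1)$: each type has infinitely many instances as the basepoint $t_0$ varies, and a priori the $T$-distance between the two endpoints of a given type could grow with $t_0$. Likewise, the bound on the cardinality of the ball $B_S(t_0,L)$ is a statement relative to $t_0$ and gives no uniform control over all basepoints.

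Two separate arguments are needed here, and the paper supplies both. First, for a segment of the geodesic that genuinely enters $S-T$, its last $T$-vertex before entering and its first $T$-vertex after leaving are each adjacent in $\Gamma(S,A)$ to a vertex of $S-T$; since $S-T$ is finite and \fgt{} bounds the number of solutions $x$ of $xa=q$, the set of all such vertices is a \emph{fixed finite subset of $T$} (the set $B$ in the paper's proof), so the maximum of $d_T$ over that finite set is the required bound. This is close to what you gesture at, but the relevant finite object is the neighbourhood of $S-T$, not a ball around $t_0$. Second --- and this is the case your sketch does not address at all --- a segment can be a single edge $t_0\to t_0a$ with \emph{both} endpoints in $T$ but with $\overline{a}\in S-T$ (such generators must exist whenever $T\neq S$). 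No finite-set argument applies here, because neither endpoint need be anywhere near $S-T$ in the graph. The paper handles this by writing $t_0=a_1\cdots a_N$ as a minimal product of generators of $T$ and using the \fgt{} bound on coterminal equally-labelled edges to show that some suffix product $a_{N-i}\cdots a_N\overline{a}$ with $i$ bounded already lies in $T$; this yields a $T$-path of bounded length from $t_0$ to $t_0a$ (backwards along the short suffix, then forwards along a bounded word over the generators of $T$). Without an argument of this kind your constant $K_0$ is not established, and the lower quasi-isometry inequality remains unproved.
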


\begin{proof}
Let $A$ be a finite generating set for $T$. Then $A \cup (S-T)$
generates $S$ and the Cayley graph $\Gamma_S = \Gamma(S,A\cup(S-T))$
contains the Cayley graph $\Gamma_T = \Gamma(T,A)$ as a
subgraph. Denote the distance function in $\Gamma_S$ by $d_S$ and the
distance function in $\Gamma_T$ by $d_T$.

Consider arbitrary elements $t_1$ and $t_2$ of $T$.  Notice first that
$d_S(t_1,t_2) \leq d_T(t_1,t_2)$, since $\Gamma_T$ is a subgraph of
$\Gamma_S$.  The aim is to find a constant $k$, dependent only on $S$,
$T$, and $A$, such that $d_T(t_1,t_2) \leq k d_S(t_1,t_2)$. It will
then follows that the embedding map is a $(k,0)$-quasi-isometry.

Let
\[
B = \{t \in T : (\exists a \in A \cup (S-T), s \in S-T)((ta = s) \lor (sa = t)\}.
\]
In the definition of $B$, there are only finitely many choices for $a$
and $s$, and thus, since $S$ is of finite geometric type, there are
only finitely many possibilities for $t$; thus $B$ is finite. Let
\[
k_1 = \max\{d_T(t,t') : t,t' \in B\}.
\]
Let $f$ be such that there are at most $f$ coterminal edges with the
same label in $\Gamma_S$. Let $g = f|S-T|$. Let
\[
k_2 = \max\{d_T(t,ts) : t,ts \in T, s \in (S-T), |t|_T \leq g\}.
\]
Let $h = \max\{|as|_T : a \in A, s \in S-T, as \in T\}$. Let
\[
k_3 = h + g.
\]
Let $k = \max\{k_1,k_2,k_3\}$. Let $c=|S-T|+1$. The aim is to show
that the embedding map is a $(k,0,2)$-quasi-isometry. 

To this end, let $t_1,t_2 \in T$ be arbitrary. Let $\gamma$ be an
$\Gamma_S$-geodesic from $t_1$ to $t_2$. Let us construct a
path from $t_1$ to $t_2$ that lies entirely in $\Gamma_T$ and whose
length is at most a $k$ times that of $\gamma$.

\begin{figure}[tb]
\centerline{%
\begin{tikzpicture}[x=2cm,y=2cm,vertex/.style={circle,draw,fill=black,inner sep=1pt}],
\node[vertex] (tone) at (0,0) {};
\node[vertex] (ttwo) at (4,.5) {};
\node[vertex] (p) at (1.5,1.3) {};
\node[vertex] (q) at (3,1.2) {};
\coordinate (phalf) at ($ (tone)!.5!(p) $);
\coordinate (qhalf) at ($ (q)!.5!(ttwo) $);
\coordinate (selt) at ($ (p)!.5!(q) + (0,.4) $);
\coordinate (telt) at ($ (p)!.5!(q) - (0,.3) $);
\coordinate (leftbound) at (-.2,1.4);
\coordinate (rightbound) at (4.2,1.4);
\draw[dashed] (tone)--(p);  
\draw (phalf)--(p);  
\draw (q)--(qhalf);
\draw[dashed] (qhalf)--(ttwo);
\draw[dotted] (leftbound)--(rightbound);
\path (p) edge[bend left=50] node[anchor=south west] {$\beta_i$} (q);
\path[dashed] (p) edge[bend right=40] (q);
\node[anchor=south west] at (leftbound) {$S-T$};
\node[anchor=north west] at (leftbound) {$T$};
\node[anchor=east] at (p) {$p$};
\node[anchor=west] at (q) {$q$};
\node[anchor=east] at (tone) {$t_1$};
\node[anchor=west] at (ttwo) {$t_2$};
\end{tikzpicture}}
\caption{Replacing a subpath $\beta_i$ with a subpath of length at most $k_1$ lying entirely in $\Gamma_T$.}
\end{figure}
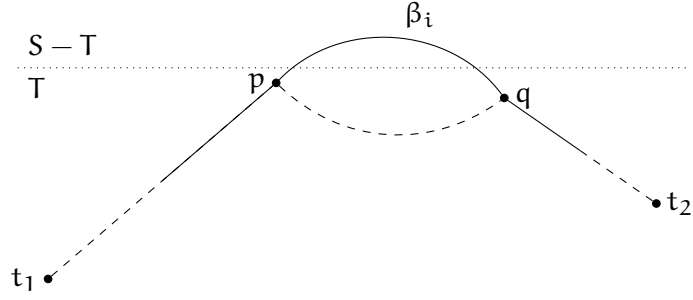
Suppose first that $\gamma$ visits some vertices in $S-T$. Then
$\gamma = \alpha_0\beta_1\alpha_1\cdots \beta_n\alpha_n$, where every
$\alpha_i$ contains only vertices from $T$ and every $\beta_i$
contains only vertices from $S-T$, and every $\alpha_i$, $\beta_i$
contains at least one vertex. Suppose the last vertex in
$\alpha_{i-1}$ is $p \in T$ and the first vertex $\alpha_i$ is $q \in
T$. Notice that $d_S(p,q) \geq 2$. Now, $p, q \in B$ and so $d_T(p,q)
\leq k_1$. So $\beta_i$ can be replaced by a subpath of length at most
$k_1$ lying entirely in $\Gamma_T$. Doing this for all $\beta_i$ yields a new
path $\gamma'$ that only visits vertices in $T$. Notice that
$|\gamma'| \leq k_1|\gamma|$.

This new path $\gamma'$ may, however, still contain edges labelled by
elements of $S-T$. These can only lie on the subpaths $\alpha_i$,
since the subpaths that replaced the $\beta_i$ lie entirely in
$\Gamma_T$. Consider such an edge from $p$ to $q$ labelled by $s \in
S-T$. If $|p|_T \leq g$, this edge can be replaced by a path in
$\Gamma_T$ of length $k_2$.

Otherwise let $p = a_1\cdots a_n$, where $a_i \in A$ and
$n$ is minimal with $n > g = f|S-T|$. Consider the elements
\begin{equation}
\label{eq:finrees1}
a_ns, a_{n-1}a_ns, \ldots, a_{n-g}\cdots a_ns.
\end{equation}
Now, each of the $g+1$ products $a_n, a_{n-1}a_n, \ldots,
a_{n-g}\cdots a_n$ is distinct, for otherwise $n$ would not be
minimal. Suppose that all the elements \eqref{eq:finrees1} lie in
$S-T$. Then, since there are $f|S-T|+1$ of them, at least $f+1$ of
them must equal the same element $x$ of $S-T$. But then there are
$f+1$ edges coterminal at $x$ with label $s$. This is a contradiction,
and so some one of the elements \eqref{eq:finrees1} lies in $T$. Let
$i$ be minimal such that $a_{n-i}\cdots a_ns$ lies in $T$. Then
$a_{n-i+1}\cdots a_ns = s' \in S-T$ and $a_{n-i}s' \in T$. So there is
a word $u \in A^+$ of length at most $h$ with $a_{n-i}s' = u$. Since
the distance from $p$ to $a_1\cdots a_{n-i-1}$ is at most $g$, there
is a path from $p$ to $q$, entirely in $\Gamma_T$, of length at most
$k_3 = g+h$.

So, any such edge in $\gamma'$ can be replaced by a path in $\Gamma_T$
of length at most $\max\{k_2,k_3\}$. Replacing every edge in this way,
we obtain a path $\gamma''$, entirely in $\Gamma_T$, of length at most
$k|\gamma|$.

Therefore $d_S(t_1,t_2) \leq d_T(t_1,t_2) \leq kd_S(t_1,t_2)$ for any
$t_1,t_2 \in T$.

Finally, note that every point in $s - S-T$ is a distance at most $2$
from a point in $T$, via the edge labelled by $s$ from the identity of
the monoid $S$ to $s$ and any edge labelled by $a \in A$.

Consequently, the embedding map $T \hookrightarrow S$
is a $(k,0,2)$-quasi-isometry.
\end{proof}

A consequence of the celebrated Rips construction
\cite{rips_subgroups} is that hyperbolic groups can contain
non-hyperbolic (indeed non-finitely presented) subgroups. Thus
\fullref{Theorem}{thm:hyperbolicfgt} result does not hold for
arbitrary subsemigroups.

\subsection{Ends}
\label{sec:ends}

One can define the notion of ends of finitely generated semigroup: for
a finite generating set $A$ for a semigroup S, one considers the
underlying undirected graph of $\Gamma(S,A)$ and considers its number
of ends, see~\cite{kilibarda_ends}. One indeed proves that the number
of ends does not depend on the finite generating set. The following
theorem holds:

\begin{theorem}
Let $S$ be a finitely generated semigroup and let $T$ be a large
subsemigroup of $S$. Then the number of ends of $S$ coincides with the
number of ends of $T$.
\end{theorem}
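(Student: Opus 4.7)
The plan is to compare the Cayley graphs of $T$ and $S$ using compatible generating sets and exhibit a bijection between their sets of ends. By inheritance of finite generation (see \fullref{\S}{sec:fingen}), $T$ is finitely generated; fix a finite generating set $A$ for $T$ and set $A' = A \cup (S-T)$, a finite generating set for $S$. Form the undirected Cayley graphs $\Gamma_T = \Gamma(T,A)$ and $\Gamma_S = \Gamma(S,A')$. The vertex set of $\Gamma_S$ is $T \cup (S-T)$ with $|S-T|$ finite. Let $\widehat{\Gamma}$ denote the induced subgraph of $\Gamma_S$ on $T$: it contains $\Gamma_T$ as a subgraph (same $A$-labelled edges) together with possibly extra edges labelled by elements of $S-T$, namely edges $\{t, ts\}$ for $s \in S-T$ with $t, ts \in T$.

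In computing the number of ends of $\Gamma_S$, one may restrict to a cofinal family of finite vertex subsets $F$ containing $S-T$, so that the infinite components of $\Gamma_S - F$ coincide with those of $\widehat{\Gamma} - (F \cap T)$. Therefore, a bijection of ends between $\Gamma_T$ and $\Gamma_S$ reduces to showing that, for a cofinal family of finite $F_T \subseteq T$, the infinite components of $\Gamma_T - F_T$ agree with those of $\widehat{\Gamma} - F_T$. That is, one must show the additional $(S-T)$-labelled edges of $\widehat{\Gamma}$ do not merge distinct infinite components of $\Gamma_T - F_T$.

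The key step is to bound, for each $s \in S-T$, the $\Gamma_T$-distance $d_T(t, ts)$ uniformly across all pairs with $t, ts \in T$. I would attack this using the Reidemeister--Schreier rewriting $\phi$ described in \fullref{\S}{sec:fingen}: given a word $w \in A^+$ representing $t$, the string $ws$ (while not directly an $A$-word) can be analysed by seeking a sufficiently long suffix $w_2$ of $w$ such that $w_2 s$ is representable by a short $A$-word, which then converts $t$ into $ts$ via a short $\Gamma_T$-path whose deviation is controlled by finite data depending only on $s$. Once such a uniform bound $k(s)$ is established, any extra edge $\{t, ts\}$ has its endpoints in the same infinite component of $\Gamma_T - F_T$ as soon as $F_T$ avoids a bounded neighbourhood of the cut between two candidate infinite components, and hence no merging occurs.

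The main obstacle is precisely establishing this uniform bound on $d_T(t, ts)$: unlike the finite-geometric-type setting of \fullref{Theorem}{thm:reesquasi}, we lack direct Lipschitz control, and a fixed $s$ can in principle arise as a product $as'$ in many inequivalent ways as $t$ varies. I expect this to be handled by a pigeon-hole argument mirroring the one used in the proof of \fullref{Theorem}{thm:reesquasi} (exploiting the finiteness of $S-T$ to force a long enough suffix to ``absorb'' the $s$), or alternatively by a direct coarse-geometric argument that bypasses quasi-isometry and works purely at the level of infinite components. With such control in hand, one obtains a natural bijection between the infinite components of $\Gamma_T - F_T$ and those of $\Gamma_S - (F_T \cup (S-T))$ for all sufficiently large finite $F_T$, proving that $T$ and $S$ have the same number of ends.
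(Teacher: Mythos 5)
Your reduction is sound as far as it goes: since $S-T$ is finite, the finite sets $F\supseteq S-T$ are cofinal, so the ends of $\Gamma_S$ are computed from the induced subgraph $\widehat{\Gamma}$ on $T$, which is $\Gamma_T$ together with the chords $\{t,ts\}$ for $s\in S-T$ with $t,ts\in T$; the whole problem is to show these chords do not disturb the end structure. (The paper itself gives no proof, citing the preprint of Kilibarda, Maltcev \& Craik and describing the argument only as ``some rewriting procedure similar to the proof of the previous theorem''.) The genuine gap is your key step: the uniform bound on $d_T(t,ts)$ is \emph{false} in general, and the pigeon-hole argument you propose to import from \fullref{Theorem}{thm:reesquasi} cannot supply it, because that argument leans exactly on the finite-geometric-type constant $f$ bounding the number of coterminal equally-labelled edges, which is the hypothesis missing here. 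Concretely, let $T$ have underlying set $\{a,c\}^+\cup\{a,c\}^*z$ with $u\cdot v$ equal to the concatenation $uv$ truncated at the first occurrence of $z$ (so $z$ is a left zero and $T$ is generated by $\{a,c,z\}$), and let $S=T\cup\{s\}$ with $sx=s$ for all $x\in S$, $s^2=s$, $a^ks=s$, and $us=u'z$ whenever $u=u'da^k$ with $d\in\{c,z\}$. One checks associativity directly, and $T$ has Rees index $2$ in $S$; but the chord from $t=u'ca^k$ runs to $ts=u'z$, and $d_T(u'ca^k,u'z)=k+2$ (indeed $d_T(ca^k,z)=\infty$, since $z$ is an isolated vertex of $\Gamma_T$). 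So no constant $k(s)$ exists, even for a single fixed $s$.

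What is actually available is a dichotomy rather than a Lipschitz estimate, and a correct proof has to use it. If $su\in T$ for some $u\in A^+$, then taking a shortest such $u$ and a word $w\in A^+$ representing $su$ gives $(ts)u=t(su)=tw$ for every relevant $t$, whence $d_T(t,ts)\leq |u|+|w|$ uniformly and your argument does go through for such $s$. But if $sA^+\cap T=\emptyset$, then $(ts)y=t(sy)\in t(S-T)$ for every $y$, so the head $ts$ of each such chord has principal right ideal of size at most $|S-T|$; these chords can be arbitrarily long (as above), and showing they do not merge ends requires a structural argument about where such heads can sit relative to the infinite components of $\Gamma_T-F$, not a metric bound. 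Your proposal does not contain that argument, and the fallback you offer (``a direct coarse-geometric argument that bypasses quasi-isometry'') is a placeholder rather than a proof. A further warning sign: the paper notes the statement \emph{fails} for finite Green index, whereas a proof by uniform distance bounds of the kind you envisage would be unlikely to detect the difference between the two indices.
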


The proof of this theorem involves some rewriting procedure similar to
the proof of the previous theorem. Strangely, for finite Green index
this theorem is no longer true, but with additional condition of
can\-cel\-la\-tiv\-ity of $S$, it generalizes to Green index, too.

\section{Bergman's property \& cofinality}
\label{sec:bergman}

A semigroup $S$ is said to have \emph{Bergman's property} if for any
generating set $A$ for $S$ there exists $n=n(A)\geq 1$ such that
$S=A\cup\cdots\cup A^n$. It is so-called after Bergman
\cite{bergman_generating} noticed that infinite symmetric groups
satisfy this property. In \cite[Proposition~2.2]{maltcev_bergman} the
authors initiated the study of Bergman's property for semigroups. It
turns out that the following two natural versions of algebraic
cofinality are very closely related to the Bergman's property.

\begin{definition}
Let $S$ be a non-finitely generated semigroup. The \emph{cofinality}
$\cf(S)$ of $S$ is the least cardinal $\kappa$ with the
property that there exists a chain of proper subsemigroups
$(S_i)_{i<\kappa}$ such that $\bigcup_{i<\kappa}S_i=S$.

Let $S$ be a non-finitely generated semigroup. The \emph{strong
  cofinality} $\scf(S)$ of $S$ is the least cardinal $\kappa$
with the property that there exists a chain of proper subsets
$(S_i)_{i<\kappa}$ such that $\bigcup_{i<\kappa}S_i=S$, and
$S_iS_i\subseteq S_{i+1}$ for all $i<\kappa$.
\end{definition}

The following result gives all the information needed about the
relationship of Bergman's property, cofinality, and strong cofinality:

\begin{proposition}[{\cite[Proposition~2.2]{maltcev_bergman}}]
Let $S$ be a non-finitely generated semigroup. Then
\begin{enumerate}
\item $\scf(S)>\aleph_0$ if and only if
  $\cf(S)>\aleph_0$ and $S$ has Bergman's property;
\item If $\scf(S)>\aleph_0$, then
  $\scf(S)=\cf(S)$.
\end{enumerate}
\end{proposition}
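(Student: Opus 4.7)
The plan is to prove the two parts in order, with the common backbone being the observation that any chain of proper subsemigroups is automatically a strong-cofinality witness (a subsemigroup $T$ trivially satisfies $TT\subseteq T$). This gives the easy inequality $\scf(S)\le \cf(S)$, which I would record at the very beginning and lean on throughout.

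For the forward direction of (1), assume $\scf(S)>\aleph_0$. The easy inequality yields $\cf(S)\ge\scf(S)>\aleph_0$, so only Bergman's property remains. If it failed, some generating set $A$ would have $A\cup A^2\cup\cdots\cup A^n\subsetneq S$ for every $n\ge 1$; then the sets $S_i=A\cup A^2\cup\cdots\cup A^{2^i}$ would all be proper, satisfy $S_iS_i\subseteq A^2\cup\cdots\cup A^{2^{i+1}}\subseteq S_{i+1}$, and have union $\langle A\rangle=S$, contradicting $\scf(S)>\aleph_0$.

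For the backward direction of (1), assume $\cf(S)>\aleph_0$ and $S$ has Bergman's property, and suppose for contradiction $\scf(S)\le\aleph_0$, witnessed by $(S_n)_{n<\omega}$. The right move is to pass to $T_n=\langle S_n\rangle$. Either every $T_n$ is proper, in which case $(T_n)$ is a chain of proper subsemigroups with union $S$, contradicting $\cf(S)>\aleph_0$; or some $T_n=S$, in which case $S_n$ generates $S$ and Bergman's property gives $m\ge 1$ with $S=S_n\cup S_n^2\cup\cdots\cup S_n^m$. A simple induction on $k$, using $S_iS_i\subseteq S_{i+1}$ and $S_i\subseteq S_{i+1}$, shows $S_n^k\subseteq S_{n+k-1}$, hence $S\subseteq S_{n+m-1}$, contradicting properness.

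For (2), the same device works: given a witness $(S_i)_{i<\scf(S)}$, set $T_i=\langle S_i\rangle$. Since $\scf(S)>\aleph_0$, part (1) supplies Bergman's property, and the inductive containment $S_i^k\subseteq S_{i+k-1}$ from the previous paragraph together with Bergman's property force each $T_i$ to be proper (otherwise some $S_{i+m-1}$ with $i+m-1<\scf(S)$ would equal $S$). Thus $(T_i)$ is a chain of proper subsemigroups of length $\scf(S)$ with union $S$, giving $\cf(S)\le\scf(S)$, and equality follows from the easy inequality. The main obstacle, if there is one, is spotting the inductive containment $S_i^k\subseteq S_{i+k-1}$ and recognizing that Bergman's property is precisely what turns a finite diameter in a generating set into a single index in the strong-cofinality chain; once that is in place, the rest is bookkeeping.
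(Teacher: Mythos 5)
Your proof is correct. Note that the survey does not prove this proposition itself --- it only cites it from Maltcev, Mitchell \& Ru\v{s}kuc --- but your argument (the easy inequality $\scf(S)\le\cf(S)$ coming from the fact that a chain of proper subsemigroups is automatically a strong-cofinality witness, the doubling chain $S_i=A\cup A^2\cup\cdots\cup A^{2^i}$ for the forward direction of (1), and passing to $T_i=\langle S_i\rangle$ together with the inductive containment $S_i^k\subseteq S_{i+k-1}$ for the converse of (1) and for (2)) is the standard one and is essentially the argument in that source.
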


Essentially, this proposition says that there are four different cases
with respect to having/not having Bergman's property and admitting/not
admitting countable cofinality. All four cases do in fact arise; see
\cite[\S~2]{maltcev_bergman}.

The fact that semigroups with uncountable strong cofinality always
have Bergman's property gives a very convenient way to construct
examples of semigroups with Bergman's property: one can use various
techniques to show that a semigroup has uncountable strong cofinality,
including the diagonal argument. But what is really convenient, is to
use the following --- quite surprising --- result, which characterizes
semigroups with uncountable strong cofinality in terms of length
functions:

\begin{proposition}[{\cite[Lemma~2.3]{maltcev_bergman}}]
Let $S$ be a non-finitely generated semigroup. Then
$\scf(S)>\aleph_0$ if and only if every function
$\Phi:S\to\nset$ such that there exists $k=k(\Phi)\geq 1$ such that
\begin{equation*}
(st)\Phi\leq (s)\Phi+(t)\Phi+k\quad\text{for all $s,t\in S$}
\end{equation*}
is bounded above.
\end{proposition}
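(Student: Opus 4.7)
The strategy is a direct two-way translation: the level sets of a quasi-subadditive function $\Phi$ form a strong-cofinality chain, while given such a chain one reads off $\Phi$ as the first index at which an element appears.

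For the forward direction, suppose $\scf(S) > \aleph_0$ and let $\Phi : S \to \nset$ satisfy $(st)\Phi \leq (s)\Phi + (t)\Phi + k$ for all $s, t \in S$. Assume for contradiction that $\Phi$ is unbounded. Pick an increasing $f : \nset \to \nset$ with $f(n+1) \geq 2 f(n) + k$ for all $n$ (for instance $f(0) = k$ and $f(n+1) = 2f(n) + k$), and set
\[
S_n = \{s \in S : (s)\Phi \leq f(n)\}.
\]
Since $\Phi$ is unbounded each $S_n$ is a proper subset of $S$; since $f$ is unbounded and $\Phi$ is $\nset$-valued, the $S_n$ form an increasing chain whose union is $S$; and if $s, t \in S_n$ then $(st)\Phi \leq 2 f(n) + k \leq f(n+1)$, so $S_n S_n \subseteq S_{n+1}$. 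This witnesses $\scf(S) \leq \aleph_0$, a contradiction.

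For the backward direction, suppose $\scf(S) \leq \aleph_0$. Since $S$ is not finitely generated, no finite chain of proper subsets can cover $S$ (its union would equal the top member), so in fact $\scf(S) = \aleph_0$ and there is a chain $(S_n)_{n < \omega}$ of proper subsets of $S$ with $\bigcup_{n} S_n = S$ and $S_n S_n \subseteq S_{n+1}$. Define $\Phi : S \to \nset$ by $(s)\Phi = \min\{n : s \in S_n\}$. If $(s)\Phi = m$ and $(t)\Phi = n$, put $N = \max(m, n)$; as $(S_n)$ is a chain we have $s, t \in S_N$, whence $st \in S_{N+1}$, giving $(st)\Phi \leq N + 1 \leq (s)\Phi + (t)\Phi + 1$, so $\Phi$ is quasi-subadditive with $k = 1$. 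Finally $\Phi$ is unbounded: a bound $(s)\Phi \leq M$ for all $s$ would force $S = \bigcup_{n \leq M} S_n = S_M$, contradicting the fact that $S_M$ is proper.

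The only delicate step is the choice of $f$, which must grow fast enough that the level sets close up within one step of the chain; once that is in place both implications amount to routine unpackings of the definitions, and I expect no further substantive obstacle.
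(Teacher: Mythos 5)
Your proof is correct; the paper itself does not prove this proposition but merely cites it from Maltcev, Mitchell \& Ru\v{s}kuc, and your argument (level sets of a quasi-subadditive function along a fast-growing scale $f(n+1)=2f(n)+k$ in one direction, first index of appearance in a chain in the other) is exactly the standard one used there. Both implications check out, including the observations that a finite chain of proper subsets cannot cover $S$ and that unboundedness of $\Phi$ follows from properness of each $S_M$.
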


This characterization is the key to proving the following result:

\begin{theorem}[{\cite[Theorem~3.2]{maltcev_bergman}}]
Let $S$ be a non-finitely generated semigroup, and $T$ be a large
subsemigroup in $S$. Then $\cf(S)=\cf(T)$ and $\scf(S)=\scf(T)$.
\end{theorem}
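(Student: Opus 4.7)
The theorem asserts $\cf(S)=\cf(T)$ and $\scf(S)=\scf(T)$, so four inequalities must be established. The two easy ones, $\cf(T)\leq\cf(S)$ and $\scf(T)\leq\scf(S)$, come from restricting chains from $S$ to $T$; the two reverse inequalities require building chains on $S$ from chains on $T$, and this is where the length-function characterization just proved will be the key tool.

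For the easy direction, I would take any chain $(S_i)_{i<\kappa}$ of proper subsets of $S$ with $S_iS_i\subseteq S_{i+1}$ and $\bigcup_iS_i=S$, and set $T_i=S_i\cap T$. The closure condition $T_iT_i\subseteq T_{i+1}$ is automatic and $\bigcup T_i=T$. Properness of $T_i$ for cofinally many $i$ uses finiteness of $S-T$: if $T_j=T$ for some $j$, pick $i^\ast\geq j$ exceeding the finitely many indices at which each element of $S-T$ first appears in $(S_i)$; then $S_{i^\ast}\supseteq T\cup(S-T)=S$, contradicting properness. So $\scf(T)\leq\scf(S)$, and the same reasoning with ``subset'' replaced by ``subsemigroup'' gives $\cf(T)\leq\cf(S)$.

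For the reverse inequality for $\scf$ I would use the length-function characterization. One half is easy: any subadditive $\Phi:S\to\nset$ restricts to a subadditive function on $T$, which is bounded if $\scf(T)>\aleph_0$; since $\Phi$ is also bounded on the finite set $S-T$, it is bounded on $S$, giving $\scf(S)>\aleph_0$. For the other half, given a subadditive $\Psi:T\to\nset$ with constant $k$, I would extend to $\Phi:S\to\nset$ by setting $\Phi(s)=M$ on $S-T$ for a large constant $M$. The subadditivity inequality $\Phi(xy)\leq\Phi(x)+\Phi(y)+k'$ reduces, after routine case-checking, to a uniform bound $\Psi(xy)-\Psi(x)\leq N_y$ for each fixed $y\in S-T$ and all $x\in T$ with $xy\in T$ (and its symmetric counterpart). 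This uniform bound is the main obstacle, and it is where the Reidemeister--Schreier rewriting enters (echoing the rewriting used in the preceding theorem on ends): using a generating set $A$ for $S$ together with Campbell et al.'s rewriting map $\phi:L(A,T)\to D^+$, one shows that $\phi(w_xw_y)$ agrees with $\phi(w_x)$ up to a suffix in $D^+$ whose length is bounded in terms of $y$ alone, and then subadditivity of $\Psi$ applied to that bounded suffix yields $N_y$. Once $\Phi$ is shown subadditive, the assumption $\scf(S)>\aleph_0$ forces $\Phi$, and hence $\Psi$, to be bounded, giving $\scf(T)>\aleph_0$.

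This argument yields the equivalence $\scf(S)>\aleph_0\iff\scf(T)>\aleph_0$. To upgrade to the full cardinal equalities, I would lift the extension to a chain-level statement: starting with a chain $(T_i)_{i<\kappa}$ witnessing $\scf(T)=\kappa$, define $S_i=T_i\cup(S-T)$ and restore the closure $S_iS_i\subseteq S_{i+1}$ by absorbing the finitely many cross-product sets $T_i(S-T)\cap T$, $(S-T)T_i\cap T$, and $(S-T)^2\cap T$ via a bounded reindexing of the chain; the rewriting argument shows that these enlargements leave $T_i$ proper in $T$, and the bounded reindexing preserves the cardinality $\kappa$. This gives $\scf(S)\leq\scf(T)$, and the same construction with subsemigroups in place of subsets gives $\cf(S)\leq\cf(T)$. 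In the countable case ($\scf=\aleph_0$), the diagonalization needed in the reindexing is elementary. The technical heart of the whole proof is the uniform bound $N_y$: this is the step where the rewriting machinery, rather than purely formal manipulation, does real work, mirroring the role of rewriting in the preceding ends theorem.
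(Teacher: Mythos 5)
First, a point of reference: the survey does not itself prove this theorem --- it is quoted from \cite[Theorem~3.2]{maltcev_bergman}, with only the remark that the length-function characterization of $\scf(\cdot)>\aleph_0$ is ``the key''. So your choice of that characterization as the central tool matches the paper's one-line indication of method, and your two easy inequalities $\cf(T)\leq\cf(S)$ and $\scf(T)\leq\scf(S)$, obtained by intersecting a chain for $S$ with $T$ and using finiteness of $S-T$ to preserve properness, are correct. The difficulties are concentrated in the two reverse inequalities, and there your argument has genuine gaps rather than omitted routine details.

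The technical heart of your proposal is the uniform bound $\Psi(xy)-\Psi(x)\leq N_y$ for fixed $y\in S-T$ and all $x\in T$ with $xy\in T$, which you claim to extract from the rewriting map $\phi:L(A,T)\to D^+$. As described, this does not work. Factoring $xy$ as (element represented by a common prefix of $\phi(w_x)$ and $\phi(w_xw_y)$) times (element represented by a short suffix) and applying subadditivity gives $\Psi(xy)\leq\Psi(p)+\Psi(q)+k$, and to conclude you would need both $\Psi(p)\leq\Psi(x)+O(1)$ and $\Psi(q)$ bounded in terms of $y$ alone. Neither is available: subadditivity bounds $\Psi$ of a product from above by $\Psi$ of its factors, never the reverse, so $\Psi(p)$ (a partial product of $x$) can exceed $\Psi(x)$ by an arbitrary amount; and since $S$ is \emph{not} finitely generated, $A$ and hence $D$ are infinite, so a suffix of bounded length over $D$ still ranges over an infinite subset of $T$ on which $\Psi$ need not be bounded. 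The same unresolved issue reappears in your chain-lifting step: the sets $T_i(S-T)\cap T$ and $(S-T)T_i\cap T$ that you propose to ``absorb via a bounded reindexing'' are products of a generally infinite set with a finite one, hence generally infinite, and there is no reason they lie in any $T_j$; likewise $T_i\cup(S-T)$ is not a subsemigroup, and the properness of the subsemigroup it generates is precisely the point at issue. So the decisive step --- controlling $\Psi$, or a chain, on the mixed products $T(S-T)\cap T$ and $(S-T)T\cap T$ --- is asserted but not established, and it is exactly where the structural input from finite Rees index must enter; for the actual mechanism one has to go to \cite[Theorem~3.2]{maltcev_bergman}. A smaller structural remark: once $\cf(S)=\cf(T)$ and the equivalence $\scf(S)>\aleph_0\iff\scf(T)>\aleph_0$ are in hand, the full equality $\scf(S)=\scf(T)$ already follows from Proposition~2.2 of \cite{maltcev_bergman} as stated in \fullref{\S}{sec:bergman} (since $\scf=\cf$ when $\scf>\aleph_0$, and $\scf=\aleph_0$ otherwise), so a separate chain-lifting argument for $\scf$ is not needed even in outline.
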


\begin{theorem}[{\cite[Theorem~3.2]{maltcev_bergman}}]
Within the class of non-finitely generated semigroups, Bergman's
property is inherited by small extensions.
\end{theorem}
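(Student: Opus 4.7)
The plan is to apply Bergman's property of $T$ to a carefully chosen generating set derived from any given generating set of $S$, and then translate bounded $D$-products back into bounded $A$-products.

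First observe that $S$ itself is non-finitely generated: if $S$ were finitely generated, then so would $T$ be by the inheritance of finite generation by large subsemigroups (\fullref{\S}{sec:fingen}), contradicting the hypothesis. Now let $A$ be an arbitrary generating set for $S$. Since $S-T$ is finite and $A$ generates $S$, every element of $S-T$ admits a (finite) shortest expression as a product of letters from $A$; let $C \subseteq A^+$ be the set of these shortest representatives, one for each element of $S-T$, and put $\ell = \max_{w \in C}|w|$. Then $C$ is finite and every element of $S-T$ is a product of at most $\ell$ letters from $A$.

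Now apply the Campbell et al.\ construction from \fullref{\S}{sec:fingen} with this choice of $C$. The resulting alphabet
\[
D = \{d_{\rho,a,\sigma} : \rho,\sigma \in C \cup \{\emptyword\},\ a \in A,\ \rho a,\rho a\sigma \in L(A,T)\}
\]
generates $T$, and each letter $d_{\rho,a,\sigma}$ represents the element $\rho a\sigma$ of $T$, which is a product of at most $2\ell+1$ letters of $A$. Since $T$ has Bergman's property, there exists $n = n(D) \geq 1$ with $T = D \cup D^2 \cup \cdots \cup D^n$. Expanding each factor of a length-$\leq n$ word over $D$ into its $A$-representative yields $T \subseteq A \cup A^2 \cup \cdots \cup A^{n(2\ell+1)}$. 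Combining this with $S-T \subseteq A \cup \cdots \cup A^{\ell}$ gives $S = A \cup A^2 \cup \cdots \cup A^N$, where $N = \max\{n(2\ell+1),\ell\}$. Since $A$ was arbitrary, $S$ has Bergman's property.

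The only substantive ingredient is the Campbell--Robertson--Ru\v{s}kuc rewriting alphabet $D$: it provides the essential bridge that simultaneously (i)~is a generating set for $T$ to which Bergman's property of $T$ can be applied, and (ii)~consists of letters each of which unfolds into a uniformly bounded product of letters of $A$. The main (very mild) obstacle is checking that the unfolding bound $2\ell+1$ depends only on $|S-T|$ and the chosen $C$, not on $A$ or on $n(D)$; once this is noted, the final bound $N$ is a direct consequence.
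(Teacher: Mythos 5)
Your argument is correct. The survey gives no proof of this theorem --- it is quoted from \cite{maltcev_bergman} --- so there is no in-paper argument to compare against line by line; but your route is the natural direct one, and it correctly reuses the rewriting machinery of \fullref{\S}{sec:fingen}: from an arbitrary generating set $A$ of $S$ you build the alphabet $D$, note that each $d_{\rho,a,\sigma}$ represents a product of at most $2\ell+1$ elements of $A$ (with $\ell$ the maximal $A$-length of the finitely many elements of $S-T$), apply Bergman's property of $T$ to the generating set $\{\overline{d}:d\in D\}\subseteq T$, and unfold. A direct argument of this kind really is needed: the cofinality equalities $\cf(S)=\cf(T)$ and $\scf(S)=\scf(T)$ emphasised in the surrounding text of \fullref{\S}{sec:bergman} do not by themselves transfer Bergman's property when the cofinality is countable. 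Two small remarks. First, the degenerate case $S=T$ should be set aside at the outset, since otherwise $\ell$ is a maximum over the empty set. Second, your closing worry is misplaced: $\ell$ (hence the unfolding bound $2\ell+1$) certainly depends on $A$, but this is harmless, because in the definition of Bergman's property the bound $n(A)$ is permitted to depend on the generating set; all that matters is that for each fixed $A$ the quantities $\ell$ and $n(D)$ are finite.
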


\begin{question}
Within the class of non-finitely generated semigroups, is Bergman's
property inherited by large subsemigroups?
\end{question}

\section{Appendix}

\begin{proposition}
\label{prop:jseqjdjtnewdt}
In \fullref{Example}{ex:jseqjdjtnewdt}, $\gJ^S=\gD^S$ but
$\gJ^T\neq\gD^T$.
\end{proposition}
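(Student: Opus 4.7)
The plan is to settle $\gJ^S=\gD^S$ by verifying directly that the only nontrivial $\gJ^S$-class collapses to a single $\gD^S$-class, and to settle $\gJ^T\neq\gD^T$ by exhibiting the pair $(a,ab)$ as a witness: these are $\gJ^T$-related but, as will be argued, not $\gD^T$-related.

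For $\gJ^S=\gD^S$, the crucial computation is $a\gD^S ab$, which I would establish via the element $xa$. The defining relations $abf=xa$ and $abf^2=ab$ give $xa=ab\cdot f\in ab\cdot S^1$ and $ab=xa\cdot f\in xa\cdot S^1$, so $xa\gR^S ab$. The relation $yabf=a$ combined with $abf=xa$ gives $a=y\cdot xa\in S^1\cdot xa$, while $xa=x\cdot a\in S^1\cdot a$, so $a\gL^S xa$. Chaining, $a\gL^S xa\gR^S ab$, hence $a\gD^S ab$. The other elements of this $\gJ^S$-class (such as $cab,ca,eca=a,yabf=a$) are handled by short analogous computations, and elements outside this class lie in $\gJ^S$-classes that coincide with their $\gD^S$-classes by a routine normal-form check.

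For $\gJ^T\neq\gD^T$: $ab=a\cdot b\in aT^1$ and $a=c\cdot ab\cdot d\in T^1\cdot ab\cdot T^1$ (with $b,c,d\in T$) give $a\gJ^T ab$. Suppose for contradiction that $a\gD^T ab$; then there exists $z\in T$ with $a\gL^T z\gR^T ab$. Because $\gL^T$ (respectively $\gR^T$) is contained in the restriction of $\gL^S$ (respectively $\gR^S$) to $T\times T$, such a $z$ also satisfies $a\gL^S z\gR^S ab$. The key technical input is that the $\gR^S$-class of $ab$ is exactly $\{ab,xa\}$ and that among these only $xa$ lies in the $\gL^S$-class of $a$; hence $z=xa$. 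Now $xa\gR^T ab$ requires some $t\in T^1$ with $ab\cdot t=xa$ in $S$, but the unique $t\in S^1$ realising this is $t=f\notin T$, giving the desired contradiction.

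The main obstacle is the rewriting-system analysis supporting the two technical claims in the preceding paragraph: that the $\gR^S$-class of $ab$ is $\{ab,xa\}$, and that $t=f$ is the unique element of $S^1$ satisfying $abt=xa$. For the latter, $abt=xa$ implies $abtf=xaf=ab$ (using $abf=xa$ and $abf^2=ab$), so $tf$ is a right-stabiliser of $ab$; one then classifies the right-stabilisers of $ab$ in $S^1$ as the submonoid $\{\emptyword,f^2\}$ and uses the structure of the cyclic submonoid $\{f,f^2\}$ generated by $f$ (where $f^3=f$) to force $t=f$. The $\gR^S$-class claim follows by similar considerations. Setting up the underlying confluent, terminating rewriting system — orient the commutation relations lexicographically among $c,e,x,y$ and the remaining six relations left-to-right — and verifying the critical pairs is tedious but routine.
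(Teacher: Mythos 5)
Your identification of the witness pair $(a,ab)$ for $\gJ^T\neq\gD^T$, and the computation $a\gL^S xa\gR^S ab$ establishing $a\gD^S ab$, both match the paper's proof. But each half of your argument rests on an undischarged (and in one case false) claim. For $\gJ^S=\gD^S$: the assertion that the class of $a$ is ``the only nontrivial $\gJ^S$-class'' is not true. For example, $a^2f\cdot f=a(af^2)=a^2$ and $a^2f=a^2\cdot f$, so $a^2\gR^S a^2f$ with $a^2\neq a^2f$ (both are normal forms for the rewriting system), and the same works for $a^kf$ for every $k$; there are infinitely many nontrivial $\gJ^S$-classes, built from normal forms $u_1a^{k_1}u_2\cdots u_sa^{k_s}u_{s+1}$ with arbitrarily many $a$'s. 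The paper's proof of this half is a page-long analysis of an \emph{arbitrary} pair $u\gJ^S v$: it factors $v=\alpha u\beta$, $u=\gamma v\delta$, shows $\gamma\alpha u\beta\delta\gL^S\alpha u\beta\delta$ by constraining $\gamma,\alpha$ to $\{c,e,x,y\}^*$ or powers of $f$, and then splits into several cases on $u_{s+1}\in\{1,b,f\}$ and on whether $s=k_s=1$ to get the $\gR^S$-link. Nothing in your proposal substitutes for this; ``a routine normal-form check'' is where the entire content of that half lives.

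For $\gJ^T\neq\gD^T$, the ``key technical input'' that the $\gR^S$-class of $ab$ is exactly $\{ab,xa\}$ is precisely the hard part, and ``follows by similar considerations'' does not discharge it: you must show that \emph{no} other element $w$ satisfies $wS^1=abS^1$ in an infinite non-cancellative semigroup given by a presentation, which is an infinite verification requiring letter-counting invariants extracted from the confluent rewriting system. It is also more than you need and more than the paper proves. The paper's route is leaner: it first uses $z\gL^T a$ to show $z=wa$ with $w\in\{e,c,x,y\}^*$ (because the relations preserve the number of $a$'s and the derivation $uwa\to^* a$ excludes $b,d,f$ from $w$), then analyzes $abp=wa$ to force $w=x^k$, then uses the invariant $|{\cdot}|_b+|{\cdot}|_x$ (valid because the derivation $x^kaq\to^* ab$ can only use $f^3\to f$, $abf\to xa$, $xaf\to ab$, $af^2\to a$) to force $k+|q|_b=1$ and eventually $k=0$, and finally derives a contradiction from $abp\to^* a$. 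Your stabiliser argument ($abt=xa\Rightarrow tf$ stabilises $ab\Rightarrow tf=f^2\Rightarrow t=f$, using that $a$-free words admit only $f^3=f$ and the commutations) is correct in outline and would work, but it too depends on an unproved computation (that the right stabiliser of $ab$ in $S^1$ is $\{1,f^2\}$), and it only becomes relevant after the $\gR^S$-class claim is established. As it stands, both halves of the proposal defer the genuine difficulty to lemmas that are stated but not proved.
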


\begin{proof}
Observe the following relations in $S$:
\begin{align*}
abd &= ecabd=ea\\
xaf &= abf^2=ab\\
yab &= yabf^2=af\\
af^2 &= yabf^3=yabf=a\\
afd &= yabf^2d=yabd=yea.
\end{align*}
Furthermore, $xya=yxa=yabf=a$. Hence, recalling that $eca=cea=a$, if
for $w\in\{e,c,x,y\}^{\ast}$ we denote by $w'$ the word
$e^{k_e}c^{k_c}y^{k_y}x^{k_x}$, where
\begin{align*}
k_e &= |w|_e-\min(|w|_e,|w|_c)\\
k_c &= |w|_c-\min(|w|_e,|w|_c)\\
k_y &= |w|_y-\min(|w|_x,|w|_y)\\
k_x &= |w|_x-\min(|w|_x,|w|_y),
\end{align*}
then $wa=w'a$. It is routine to check that the rewriting system
\begin{multline*}
\bigl\{abd\to ea, f^3\to f, abf\to xa, xaf\to ab, yab\to af, af^2\to
a, afd\to yea,\\
xy\to yx, ce\to ec, xe\to ex, xc\to cx, ye\to ey, yc\to cy,\\
wa\to w'a\quad\mbox{for all} w\in\{e,c,x,y\}^{\ast}\bigr\}
\end{multline*}
is confluent. It is obvious that this system is terminating. Hence
we can work with the normal forms for the elements of $S$.

It follows from the presentation that if $f=uv$ for some $u,v\in S$,
then $u=f^k$ and $v=f^n$ for some $k,n\geq 1$. Then both $k$ and $n$
cannot be even, as otherwise $f=uv=f^2$. Therefore one of $k$ and
$n$ is odd and so $u=f$ or $v=f$. Therefore $T=S\setminus\{f\}$ is
indeed a subsemigroup of $S$.

First we prove that $\gJ^T\neq\gD^T$. Since
$ab=1\cdot a\cdot b$ and $a=c\cdot ab\cdot d$, we obtain that
$a\gJ^T ab$. Assume that $a\gD^T ab$. Then there
exists $m\in T$ such that $a\gL^T m\gR^T ab$. Then
$m=wa$ for some $w\in T^1$. There exists $u$ such that $uwa=a$.
Since the relations preserve the number of $a$'s in the words, it
follows that $w$ can contain only letters $e,c,x,y,b,f,d$. From the
rewriting rules and $uwa=a$ it follows that $uw$ cannot contain
letters $b,f,d$. Hence $w\in\{e,c,x,y\}^{\ast}$. Now, from
$wa\gR^T ab$ it follows that there exists $p,q\in T^1$ such
that $abp=wa$ and $waq=ab$. We can also assume that $p$ is in its
normal form. The normal form for $wa$ is $w'a$. So, $abp$ must be
not in its normal form and so, by inspection we deduce that $p$ must
start either with $d$, or with $f$. If $p=dp_1$, then
$abp=abdp_1=eap_1$. But then $eap_1q=ab$, which is impossible, since
from the rewriting system it follows that then $p_1$ and $q$ will
not contain $a$ and the normal form for $eap_1q$ must contain $e$.
Thus $p=fp_1$. Then $abp=abfp_1=xap_1$. Since $xap_1\to^{\ast} w'a$,
from the rewriting system it follows that $p_1$ starts either with
$f$ or with $b$ (note that $p_1$ cannot be the empty word as otherwise $p=f\in S\setminus T$):
\begin{itemize}
\item
$p_1=fp_2$. Then $abp=xafp_2=abp_2$.
\item
$p_1=bp_2$. Then $abp=xabp_2$.
\end{itemize}
By recursive arguments, we obtain that $w=x^k$ for some $k\geq 0$.
We have now $x^kaq\to^{\ast}ab$. Hence $q\in\{f,b,d\}^{\ast}$. In
actual fact $q$ cannot contain $d$'s. Indeed, otherwise we would need
to get rid of at least one of the $d$'s by applying the relations
$abd\to ea$ or $afd\to yea$, and in any case we would introduce $e$
to the left of $a$ which we would not be able to cancel. Hence
$q\in\{b,f\}^{\ast}$. Therefore in the derivation
$x^kaq\to^{\ast}ab$ the only relations we can apply are $f^3\to f$,
$abf\to xa$, $xaf\to ab$ and $af^2\to a$. It means that
$|x^kaq|_b+|x^kaq|_x=|ab|_b+|ab|_x$. Hence $k+|q|_b=1$. If $k=1$,
then $|q|_b=0$ an so $q=f$ or $q=f^2$. Since $q\in T$, we then
obtain that $q=f^2$ and so $ab=x^kaq=xaf^2=xa$, which is impossible.
Therefore $k=0$ and so $a\gR^T ab$. Hence we have the
derivation $abp\to^{\ast}a$. Again, in this derivation we cannot use
the rules $abd\to ea$ and $afd\to yea$. Since the only possibility
of obtain $y$ to the left of $a$ in the derivation, is to use the
rule $afd\to yea$, it follows that we cannot use in the derivation
the rule $yab\to af$. Hence the only rules we can use are $f^3\to
f$, $abf\to xa$, $xaf\to ab$ and $af^2\to a$. This yields $1\leq
|abp|_b+|abp|_x=|a|_b+|a|_x=0$, a contradiction. Thus
$(a,ab)\in\gJ^T\setminus\gD^T$ and so
$\gJ^T\neq\gD^T$.

Now we turn to proving that $\gJ^S=\gD^S$. To this
end let $u\gJ^S v$ for some $u,v\in S$. Then there exist
$\alpha,\beta,\gamma,\delta\in S$ such that $v=\alpha u\beta$ and
$u=\gamma v\delta$. Let the normal form for $u$ be
$u_1a^{k_1}u_2a^{k_2}\cdots u_sa^{k_s}u_{s+1}$, where $s\geq 0$ and
the words $u_i$ do not contain $a$. We have
\begin{equation}\label{eq:rewriting}
\gamma\alpha u_1a^{k_1}u_2a^{k_2}\cdots
u_sa^{k_s}u_{s+1}\beta\delta\to^{\ast}u_1a^{k_1}u_2a^{k_2}\cdots
u_sa^{k_s}u_{s+1}.
\end{equation}
First we aim to prove that
\begin{equation}\label{eq:achtung}
\gamma\alpha u_1a^{k_1}u_2a^{k_2}\cdots
u_sa^{k_s}u_{s+1}\beta\delta \gL^S \alpha
u_1a^{k_1}u_2a^{k_2}\cdots u_sa^{k_s}u_{s+1}\beta\delta.
\end{equation}
Obviously each of $\alpha$, $\beta$, $\gamma$ and $\delta$ does not
contain $a$. If $u_1$ contains $b$, $d$ or $f$, then from the
derivation~\eqref{eq:rewriting} and by inspection of the rules, we
obtain that $\gamma\alpha$ must be a power of $f$ and so in this
case~\eqref{eq:achtung} follows immediately. Hence we may assume
that $u_1\in\{c,e,x,y\}^{\ast}$. Analogously, if $\alpha$ contains
$b$, $d$ or $f$, then~\eqref{eq:rewriting} implies that $\gamma$
must be a power of $f$ and $\alpha$ must start with $f$; and in this
case again~\eqref{eq:achtung} holds. So, we may assume that
$\alpha\in\{c,e,x,y\}^{\ast}$ and then we are forced to have
$\gamma\in\{c,e,x,y\}^{\ast}$. If $s\neq 0$, then it immediately
implies~\eqref{eq:achtung}. If $s=0$, then none of the relations
involving $a$ can be applied and so we must have $\alpha=\gamma=1$,
again yielding ~\eqref{eq:achtung}. Thus~\eqref{eq:achtung} holds.
Now we will see when the condition
\begin{equation}\label{eq:gubaidulina}
\alpha u_1a^{k_1}u_2a^{k_2}\cdots
u_sa^{k_s}u_{s+1}\beta\delta \gR^S \alpha
u_1a^{k_1}u_2a^{k_2}\cdots u_sa^{k_s}u_{s+1}\beta
\end{equation}
holds and if it does, then this will finish the proof. From the
rewriting rules and~\eqref{eq:rewriting} it follows that each of
$\beta$ and $\delta$ cannot contain $e$, $c$, $x$ or $y$ and hence
$\delta\in\{b,d,f\}^{\ast}$. In the case when $s=0$, the
rewriting~\eqref{eq:rewriting} becomes $\gamma\alpha
u_1\beta\delta\to^{\ast}u_1$ in which no rules involving $a$ can be
applied and so we have that $\beta$ and $\delta$ can be only powers
of $f$; and then~\eqref{eq:gubaidulina} follows. Hence we may assume
that $s\neq 0$. Obviously we may also assume that $\delta\neq 1$. If
the derivation~\eqref{eq:rewriting} does not use a relation
involving the last $a$ in $a^{k_s}$, then $\beta$ and $\delta$ are
powers of $f$ and we are done. So, that \emph{marked} $a$ must be
involved in the derivation. We may assume that $\beta\delta$
contains $b$ or $d$ (otherwise $\beta\delta$ is a power of $f$ and
we are done). Assume that $|u_{s+1}|\geq 2$. Then $u_{s+1}$ cannot contain $d$'s and so
$u_{s+1}\in\{b,f\}^{\ast}$. Then $u_{s+1}$ cannot start with $f^2$
or $bf$ (otherwise in the normal form for $u$ we would have a
subword $af^2$ or $abf$). But if $u_{s+1}$ starts with $fb$ or
$b^2$, then $\delta$ is obliged to be a power of $f$ and $\beta$
must end with $f$, and then we are done. So, we may assume that
$|u_{s+1}|\leq 1$. Obviously $u_{s+1}\neq d$ and so we have three
possible cases: $u_{s+1}=1$, $u_{s+1}=b$ and $u_{s+1}=f$.

Assume that $s>1$ or $k_s>1$, then the relation involving the marked
$a$ cannot be $abd\to ea$ or $afd\to yea$, since otherwise we
introduce a new $e$, from which we will be not able to get rid off
in the derivation~\eqref{eq:rewriting}. From the
derivation~\eqref{eq:rewriting} it follows that
$au_{s+1}\beta\delta=\varepsilon au_{s+1}$ for some $\varepsilon\in
S$, and from the previous comment it follows that
$\varepsilon\in\{x,y\}^{\ast}$. Now we have three cases:
\begin{enumerate}

\item $u_{s+1}=1$. Then $a\beta\delta=\varepsilon a$.
Moreover, since $\varepsilon\in\{x,y\}^{\ast}$, and we may assume
that $\varepsilon a$ is in its normal form, $\varepsilon=x^k$ or
$\varepsilon=y^k$ for some $k\geq 0$. In the latter case, by the
rules $yab\to af$ and $af^2\to a$ we can find $\sigma\in S$ such
that $\varepsilon a\sigma=a$. Then $a\beta\delta\gR^S
a\beta$ and~\eqref{eq:gubaidulina} follows. Hence we may assume that
$\varepsilon=x^k$. Since $s>1$ or $k_s>1$, in the case when $k\neq
0$, it follows that in the derivation~\eqref{eq:rewriting} we either
will introduce several new $x$'s from which we cannot get rid off (in
the case when $k\geq 2$), or the normal form for $\gamma\alpha
u\beta\delta$ must end with $ab$ (in the case when $k=1$). Hence
$k=0$ and again $a\beta\delta\gR^S a\beta$,
yielding~\eqref{eq:gubaidulina}.

\item $u_{s+1}=b$. Then $ab\beta\delta=\varepsilon ab$.
Again $\varepsilon=x^k$ or $\varepsilon=y^k$ for some $k\geq 0$.
Since the rule $yab\to af$, we in actual fact have
$\varepsilon=x^k$. If $k\geq 1$, then in the
derivation~\eqref{eq:rewriting} we will introduce at least one new
$x$ from which we cannot get rid off. Hence $k=0$ and again
$ab\beta\delta\gR^Sab$, which
implies~\eqref{eq:gubaidulina}.

\item $u_{s+1}=f$. Then $af\beta\delta=\varepsilon af$
and again $\varepsilon=x^k$ or $\varepsilon=y^k$ for some $k\geq 0$.
Since the rule $xaf\to ab$, $\varepsilon=y^k$. Then applying the
rules $yab\to af$ and $af^2\to a$, we can find $\sigma\in S$ such
that $\varepsilon af\sigma=a$ and so $af\beta\delta\gR^Saf$.
Then~\eqref{eq:gubaidulina} holds.
\end{enumerate}

Thus we may assume that $s=1$ and $k_s=1$. To remind the situation: we
have $\gamma\alpha u_1au_2\beta\delta\to^{\ast} u_1au_2$ and we want
to prove that $u_1au_2\gD^S\alpha u_1au_2\beta$. Assume first that
$\gamma\alpha u_1$ contains $b$, $d$ or $f$. Then, as discussed
previously, in the derivation $\gamma\alpha
u_1au_2\beta\delta\to^{\ast} u_1au_2$ we cannot introduce $e$
immediately to the left of $a$, and so we cannot apply the rules
$abd\to ea$ and $afd\to yea$. Obviously we cannot also apply the rule
$yab\to af$. Then as in Cases 1--3, we obtain that
$au_2\beta\delta\gR^S au_2$ and so $\alpha
u_1au_2\beta\delta\gR^S\alpha u_1au_2\beta$. Thus we may assume that
$\gamma\alpha u_1\in\{e,c,x,y\}^{\ast}$. Then $\alpha
u_1au_2\beta \gL^S au_2\beta$. Moreover, we again have
$au_2\beta\delta=\varepsilon au_2$ for some
$\varepsilon\in\{e,c,x,y\}^{\ast}$. Then $u_1au_2=\gamma\alpha
u_1au_2\beta\delta=\gamma\alpha u_1\varepsilon au_2 \gL^S au_2$. So,
we are left to prove that $au_2\gD^S au_2\beta$. Recall that
$u_2\in\{1,b,f\}$ (as $|u_{s+1}|\leq 1$). Now, $a\gR^S af$ and
$a\gL^S xa\gR^S ab$, and so we are left to prove that
$au_2\beta\gD^S a$. But since $au_2\beta\delta\to^{\ast}\varepsilon
au_2$, we either have that $|u_2\beta|\leq 1$, or that
$au_2\beta=\varepsilon'a$ for some
$\varepsilon'\in\{e,c,x,y\}^{\ast}$. In the first case we have
$u_2\beta\in\{1,b,f\}$ and so $au_2\beta\gD^S a$; in the second case
$au_2\beta=\varepsilon' a\gL^S a$.

Thus we showed that $\gJ^S=\gD^S$ and we are done.
\end{proof}


\newcommand{\etalchar}[1]{$^{#1}$}

\end{document}